\newtheorem{theorem}{Theorem}[section]
\newtheorem{lemma}[theorem]{Lemma}
\theoremstyle{definition}
\newtheorem{remark}[theorem]{Remark}
\newtheorem{corollary}[theorem]{Corollary}
\numberwithin{equation}{section}
\newtheorem{thmy}{Theorem}
\newenvironment{thmx}{\stepcounter{thm}\begin{thmy}}{\end{thmy}}
\begin{document}
\title[]{Fractal geometry of continued fractions with large coefficients and dimension drop problems}
\author{Lulu Fang, Carlos Gustavo Moreira, Yiwei Zhang}

%
%
\thanks{L. Fang is supported by NSFC No. 11801591, and Natural Science Foundation of Jiangsu Province No. BK20231452. C. G. Moreira is partially supported by CNPq and FRPERJ. Y. Zhang is partially supported by NSFC Nos. 12161141002, 12271432}. 
\subjclass[2010]{11K50, 37D35, 28A80}
\keywords{continued fractions, dimension drop problems, Hausdorff dimension}

\begin{abstract}
In 1928, Jarn\'{\i}k \cite{Jar} obtained that the set of continued fractions with bounded coefficients has Hausdorff dimension one.
Good \cite{Goo} observed a dimension drop phenomenon by proving that the Hausdorff dimension of the set of continued fractions whose coefficients tend to infinity is one-half. For the set of continued fractions whose coefficients tend to infinity rapidly, Luczak \cite{Luc} and Feng et al. \cite{FWLT} showed that its Hausdorff dimension decreases even further. Recently, Liao and Rams \cite{LR16} also observed an analogous dimension drop phenomenon when they studied the subexponential growth rate of the sum of coefficients.

In this paper, we consolidate and considerably extend the studies of the abovementioned problem into a general dimension drop problem on the distribution of continued fractions with large coefficients. For each $x\in[0,1)\backslash\mathbb{Q}$, let $\{a_{n}(x)\}$ be the sequence of the coefficients of the continued fraction expansion of $x$. Given a strictly increasing sequence of positive integers $\{n_{k}\}$, and two other sequences of positive real numbers $\{s_{k}\}$ and $\{t_{k}\}$ with $s_{k},t_{k}\to\infty$ as $k\to\infty$, we study the set
\begin{equation*}
E(\{n_k\},\{s_k\}, \{t_k\}):= E_{S}(\{n_k\}) \cap E_{L}(\{n_k\},\{s_k\}, \{t_k\}),
\end{equation*}
where
\begin{align*}
E_{S}(\{n_k\}):=\big\{x\in [0,1)\backslash\mathbb{Q}: \{a_j(x)\}_{j\neq n_k (k \in\mathbb N)}\ \text{is bounded}\big\},
\end{align*}
and
\begin{align*}
E_{L}(\{n_k\},\{s_k\}, \{t_k\}):=\big\{x\in [0,1)\backslash\mathbb{Q}: s_{k} < a_{n_k}(x) \leq s_{k}+ t_{k} \ \text{for all large $k$}\big\}.
\end{align*}
We introduce quantities $\alpha:=\lim_{k\to\infty}\frac{1}{n_{k}}\sum\limits_{j=1}^{k}\log s_{j}$ and $\beta:=\lim_{k\to\infty}\frac{1}{n_{k}}\log s_{k}$ to describe the distribution of large coefficients. By using thermodynamic formalism tools, we respectively formulate the Hausdorff dimensions of the sets $E(\{n_k\},\{s_k\}, \{t_k\})$ in terms of the solutions $\theta(\alpha,\beta)$ of the Diophantine pressure equations, considering the cases where $\alpha\in (0,\infty),\alpha=0$ and $\alpha=\infty$. Our results enable us to compare the Hausdorff dimensions of the sets $E_{S}, E$ and $E_{L}$ based on different values of $\alpha$. Thus, it explains the mechanism of the dimension drop phenomena.

As applications, we use a different approach to reprove a result of Wang and Wu on the dimensions of the Borel-Bernstein sets \cite{WW}, fulfil the dimension gap proposed by Liao and Rams \cite{LR16}, and establish several new results concerning the dimension theory of liminf and limsup sets related to the maximum of coefficients.
\end{abstract}

\maketitle

\tableofcontents

\section{Introduction}
\subsection{Backgrounds}
Continued fractions are a central mathematical tool used in many areas of mathematics. As a well-known representation of numbers, continued fractions not only play an important role in the arithmetic and geometric nature of real numbers, but are also closely connected with dynamical systems, probability theory and fractal geometry, see, for examples, \cite{EW,Hen06,IK,Khi,LMMR,Sch,Yoc} and the references therein.

One of the most successful applications of continued fractions is in the theory of Diophantine approximation. Dirichlet's theorem (see \cite[page 1]{Sch}) implies that for any irrational number $x\in [0,1]$, there exist infinitely many rational numbers $p/q$ such that
\begin{equation}\label{DAineq}
\left|x-\frac{p}{q} \right| < \frac{1}{q^2}.
\end{equation}
Continued fractions provide a simple mechanism for generating these rational approximations. Moreover, irrational numbers whose coefficients include large numbers have good rational approximations, see \cite[page 34]{Khi}.

On the other hand, the right-hand side of \eqref{DAineq} cannot be replaced by $\varepsilon/q^2$ for any $0<\varepsilon<1$. This is due to the existence of the so-called \emph{badly approximable numbers}, namely an irrational number $x$ satisfying there exists a positive constant $c_x$ such that $\left|x-p/q\right| \geq c_x/q^{2}$ for all rational numbers $p/q$.  Given $x\in \mathbb{I}:=[0,1)\backslash \mathbb{Q}$, let $\{a_n(x)\}$ be the sequence of the coefficients of the continued fraction expansion of $x$. Badly approximable numbers have a beautiful characterisation using continued fractions: $x\in \mathbb I$ is badly approximable if and only if the sequence $\{a_n(x)\}$ of its coefficients is bounded, see \cite[page 22]{Sch}. Therefore, the small or large value of the coefficients of an irrational number reveals how well rational numbers approximate it.

A natural question is: what is the size (e.g., Lebesgue measure, Hausdorff dimension) of sets of continued fractions with small or large coefficients? The Borel-Bernstein theorem (see \cite[page 63]{Khi}) states that, for any $\psi: \mathbb{R}^+ \to \mathbb{R}^+$, the set
\begin{equation}\label{Apsi}
A(\psi):=\left\{x\in \mathbb{I}:a_n(x) \geq \psi(n)\;\text{for infinitely many\;$n \in \mathbb N$}\right\}
\end{equation}
has full or null Lebesgue measure according to $\sum_{n \geq 1} 1/\psi(n)$ diverges or converges. The original proof of Borel \cite{Bor09} in 1909 was incomplete, as discussed by Bernstein \cite{Ber}. Further details were provided in a later paper by Borel \cite{Bor} in 1912. As a result of the Borel-Bernstein theorem, the set of irrational numbers with bounded coefficients has Lebesgue measure zero. Equivalently, $\limsup_{n\to \infty}a_n(x)=\infty$ for Lebesgue almost every $x\in \mathbb{I}$.

Much attention has been paid to continued fractions with small or large coefficients in the literature, and there are two main research directions.

On the one hand, for continued fractions with small coefficients, Jarn\'{i}k \cite{Jar} in 1928 initiated the study of the set $F_N$ of continued fractions whose coefficients do not exceed $N$, and the set $F$ of continued fractions whose coefficients are bounded. He showed that $\dim_{\rm H}F_2>1/4$, and for every $N>8$,
\begin{equation}\label{equ:dimFN}
1-\frac{4}{N\log 2}\leq \dim_{\rm H}F_N \leq 1-\frac{1}{8N\log N}.
\end{equation}
Taking $N\to\infty$ on both sides of \eqref{equ:dimFN}, it directly implies that
\begin{equation}\label{equ:F}
\dim_{H}F=1.
\end{equation}
Jarn\'{i}k's work motivated a number of subsequent improvements. In 1941, Good \cite{Goo} gave a formula for the Hausdorff dimension of $F_N$ in terms of the Euler polynomials, and in particular proved that the dimension of $F_2$ is between $0.5306$ and $0.5320$. In 1977, Cusick \cite{Cus77} used some of Good's results to relate the dimension of $F_N$ with the exponent of convergence of some Dirichlet series. In 1992, Hensley \cite{Hen92} leveraged functional analytic techniques (e.g., linear operator theory) to obtain the asymptotic estimate of the Hausdorff dimension of $F_N$.

Over the last thirty years, with the development of smooth ergodic theory, $F_N$ can be viewed as the attractor of some iterated function systems consisting of conformal maps. This allows us to take advantage of the method of thermodynamic formalism developed in \cite{DFSU,JP,MU96}. For example, the Hausdorff dimension of $F_N$ is the unique zero point of the corresponding Diophantine pressure function, and the value of the Hausdorff dimension of $F_2$ is rigorously approximated with an accuracy of over $100$ decimal places, see the works of Jenkinson-Pollicott \cite{JP18}, and Pollicott-Vytnova \cite{PV}.

The dimension estimates for $F_N$ frequently appear in Diophantine approximation. In particular, such estimations have been employed in the study of Markoff and Lagrange spectra \cite{Bum,CF,LMMR,MMPV}, and in a contribution to the Zaremba conjecture \cite{BK,Huang,JP20,PV}. For examples, Matheus and the second named author\cite[Theorem 5.3]{MM} showed that the dimension of $F_2$ is a lower bound of the dimension of the difference of Markov and Lagrange spectra; Bourgain and Kontorovich \cite[Theorem 1.2]{BK} proved a density one version of the Zaremba conjecture, and their arguments relied on the fact that the dimension of $F_{50}$ is sufficiently close to $1$ (more precisely, $\dim_{\rm H}F_{50} >\frac{307}{312}$). We refer to a survey of Shallit \cite{Sha} for more information on continued fractions with small coefficients.


On the other hand, for continued fractions with large coefficients, the study of the Hausdorff dimension of such continued fractions was initiated by Good \cite{Goo} and he obtained a number of results. The principal one (i.e., Theorem 1 of \cite{Goo}) states that
\begin{equation}\label{equ:1/2}
\dim_{\rm H}\left\{x\in \mathbb I: \lim_{n\to \infty}a_n(x)=\infty \right\} =\frac{1}{2}.
\end{equation}
Comparing \eqref{equ:F} and \eqref{equ:1/2}, it follows that Hausdorff dimension drops directly from $1$ to $1/2$ as the coefficients grow form small to large. There are also a number of refinements and extensions of Good's result.
Consider the dual of the set $A(\psi)$ in \eqref{Apsi}:
\begin{equation}\label{Ahatpsi}
\widehat{A}(\psi):=\left\{x\in \mathbb{I}:a_n(x) \geq \psi(n)\;\text{for sufficiently large\;$n \in \mathbb N$}\right\},
\end{equation}
where $\psi: \mathbb{R}^+\to \mathbb{R}^+$ is a function satisfying $\psi(n) \to \infty$ as $n \to \infty$. It is known that the Hausdorff dimension of $\widehat{A}(\psi)$ decreases as $\psi$ grows more rapidly. For example, in 1970, Hirst \cite{Hir} proved that the Hausdorff dimension of $\widehat{A}(\psi)$ is $1/2$, when $\psi$ satisfies the property that: given every $c > 1$, there exists $b > 1$, such that $\psi(n) \leq b^{c^n}$ for sufficiently large $n \in \mathbb N$. In other words, the Hausdorff dimension always stays at $1/2$, provided that the coefficients grow slower than any doubly exponential function.

For any $b,c>1$, let $A(b,c)$ and $\widehat{A}(b,c)$ be the sets $A(\psi)$ and $\widehat{A}(\psi)$ with $\psi(n)=b^{c^n}$ respectively.
Then $\widehat{A}(b,c)$ is a subset of $A(b,c)$. Hirst \cite{Hir} in 1970 further studied the Hausdorff dimension of $\widehat{A}(b,c)$, but only obtained a lower bound: $\dim_{\rm H}\widehat{A}(b,c) \geq 1/(2c)$. For the upper bound of $\dim_{\rm H}A(b,c)$, in 1992, Moorthy \cite{Moo} showed that $\dim_{\rm H}A(b,c) \leq 2/(c+1)$. However, Moorthy's upper bound does not coincide with Hirst's lower bound. The exact Hausdorff dimensions of $A(b,c)$ and $\widehat{A}(b,c)$ were later obtained by {\L}uczak \cite{Luc}, and Feng et al. \cite{FWLT}.
They proved that
\begin{equation}\label{LFWLTbc}
\dim_{\rm H}\widehat{A}(b,c)= \dim_{\rm H}A(b,c)=\frac{1}{c+1}.
\end{equation}
This implies that when the coefficients increase sufficiently rapidly, the dimension further decreases, and possibly decreases to zero. For example, Cusick \cite{Cus} proved that $A(\psi)$ with $\psi(n)=2^{2^{2^n}}$ has Hausdorff dimension zero. In 2009, Fan et al. \cite{FLWW09} generalised the results in \eqref{LFWLTbc} by computing the Hausdorff dimension of the set of $x\in \mathbb I$ for which $a_n(x)$ belongs to $[s_n, Ns_n)$ for all $n \in \mathbb N$, where $N \geq 2$ is an integer and $\{s_n\}$ is a sequence of positive numbers tending to infinity. For more results of
continued fractions with large coefficients, see \cite{Hir73,IJ,JR,LR16,LR22,Ram,Tak,WW} and the references therein.

As we mentioned in \eqref{equ:F},\eqref{equ:1/2} and \eqref{LFWLTbc}, from the results of Jarn\'{i}k-Good-Cusick, there are two kinds of dimension drops of continued fractions: from $1$ to $1/2$ and from $1/2$ to $0$. In the case of the dimension drops from $1$ to $1/2$, it is possible to fill the dimension gaps by considering the set of continued fractions whose coefficients are large along a sparse sequence of positive integers but small at other positions. This idea was used by Good \cite{Goo} to study $A(\psi)$ with $\psi(n)=B^n$ and $1<B<\infty$, denoting it by $A(B)$ if no confusion arises. He obtained the lower and the upper bounds for the Hausdorff dimension of $A(B)$:
\begin{itemize}
  \item The lower bound: if $1/2<s\leq 1$ and $B^{4s} < \zeta(2s)-1-4^s$, then $$\dim_{\rm H}A(B) \geq s;$$
  \item The upper bound: if $s>1/2$ and $B^s > \zeta(2s)$, then $$\dim_{\rm H}A(B) \leq s.$$
\end{itemize}
Here $\zeta(\cdot)$ denotes the Riemann zeta function. The lower bound follows from the Hausdorff dimension of the subset of $A(B)$: the set of $x\in \mathbb I$ for which $1\leq a_j(x) \leq N$ for all $j\neq n_k$ and $a_{n_k}(x)= \lfloor B^{n_k}\rfloor +1$ for all $k\in \mathbb N$, where $N$ is an integer and $\{n_k\}$ is some increasing sequence of positive integers, see \cite[pages 214-215]{Goo} for more details. Note that the coefficient at the position $n_k$ has only one value, and this subset of $A(B)$ is too small such that its dimension cannot reach the optimal lower bound of the Hausdorff dimension of $A(B)$. In 2008, Wang and Wu \cite{WW} modified Good's subset of $A(B)$ by replacing $a_{n_k}(x)= \lfloor B^{n_k}\rfloor +1$ with $B^{n_k} <a_{n_k}(x) \leq 2 B^{n_k}$. They obtained the exact Hausdorff dimension of $A(B)$ provided that $\{n_k\}$ is a sparse sequence of positive integers such that $\frac{n_{k+1}}{k+1}\geq n_1+\cdots+n_k$ for all $k\in \mathbb N$. See \cite[page 1326]{WW} for more details. They also showed that the function $B\mapsto\dim_{\rm H}A(B)$ is continuous on $(1,\infty)$. Moreover,
\begin{equation*}
\dim_{\rm H}A(B) \to 1\ \ \text{as}\ \ B \to 1\ \ \ \ \text{and}\ \ \ \ \dim_{\rm H}A(B) \to \frac{1}{2}\ \ \text{as}\ \ B \to \infty.
\end{equation*}
In this sense, Wang and Wu's result is one way to fill the dimension gap from $1$ to $1/2$. For the dimension drops from $1/2$ to $0$, we remark that the results of $A(b,c)$ and $\widehat{A}(b,c)$ in \eqref{LFWLTbc} fill such dimension gaps.

Other than Jarn\'{i}k-Good-Cusick dimension drop, there are other dimension drop phenomena appearing in the study of continued fractions. For any $x\in \mathbb I$ and $n \in \mathbb N$, let $S_n(x):=\sum^n_{k=1}a_k(x)$ be the partial sum of the coefficients of $x$. In 1935, Khintchine \cite{Khi35} showed that $\lim_{n \to \infty} S_n(x)/n =\infty$ for Lebesgue almost every $x\in\mathbb I$. Subsequently, many authors investigated the Hausdorff dimension of the set
\begin{equation*}
S(\varphi):=\left\{x\in \mathbb I: \lim_{n \to \infty}\frac{S_n(x)}{\varphi(n)}=1 \right\},
\end{equation*}
where $\varphi: \mathbb{R}^+ \to \mathbb R^+$ is an increasing function with $\varphi(n) \to \infty$ as $n \to \infty$. The linear case $\varphi(n)=\gamma n$ with $\gamma \in [1,\infty)$ was studied by Iommi and Jordan \cite{IJ} who proved the function $\gamma \mapsto \dim_{\rm H}S(\varphi)$ is real analytic, increasing from $0$ to $1$, and tends to $1$ as $\gamma$ tends to infinity. For the polynomial case $\varphi(n)=n^p$ with $p \in (1,\infty)$, Wu and Xu \cite{WX} showed that the set $S(\varphi)$ has full Hausdorff dimension. In 2016, Liao and Rams \cite{LR16} found a dimension gap of $S_\varphi$ for the case $\varphi(n)=\exp(n^r)$ with $r\in (0,\infty)$:
\begin{equation}\label{Svarphijump}
\dim_{\rm H}S(\varphi)=
\left\{
  \begin{array}{ll}
    1, & \hbox{$0<r<1/2$;}\vspace{0.2cm} \\
   1/2, & \hbox{$1/2\leq r<\infty$.}
  \end{array}
\right. \vspace{0.15cm}
\end{equation}
Moreover, they also remarked in \cite[page 402]{LR16} that there is a jump of the Hausdorff dimension from $1$ to $1/2$ in the class $\varphi(n)=\exp(n^r)$ at $r=1/2$ and this jump \textbf{cannot be easily removed} by considering another class of functions. When $\varphi(n)$ tends to infinity sufficiently rapidly, such as $\varphi(n)=b^{c^n}$ with $b,c>1$, Xu \cite{Xu} obtained that the Hausdorff dimension of $S(\varphi)$ is equal to $1/(c+1)$. Refer to Figure $1$ for an illustration of the Hausdorff dimension of $S(\varphi)$.
\begin{figure}[H]
  \centering
  \includegraphics[width=12cm, height=6.7cm]{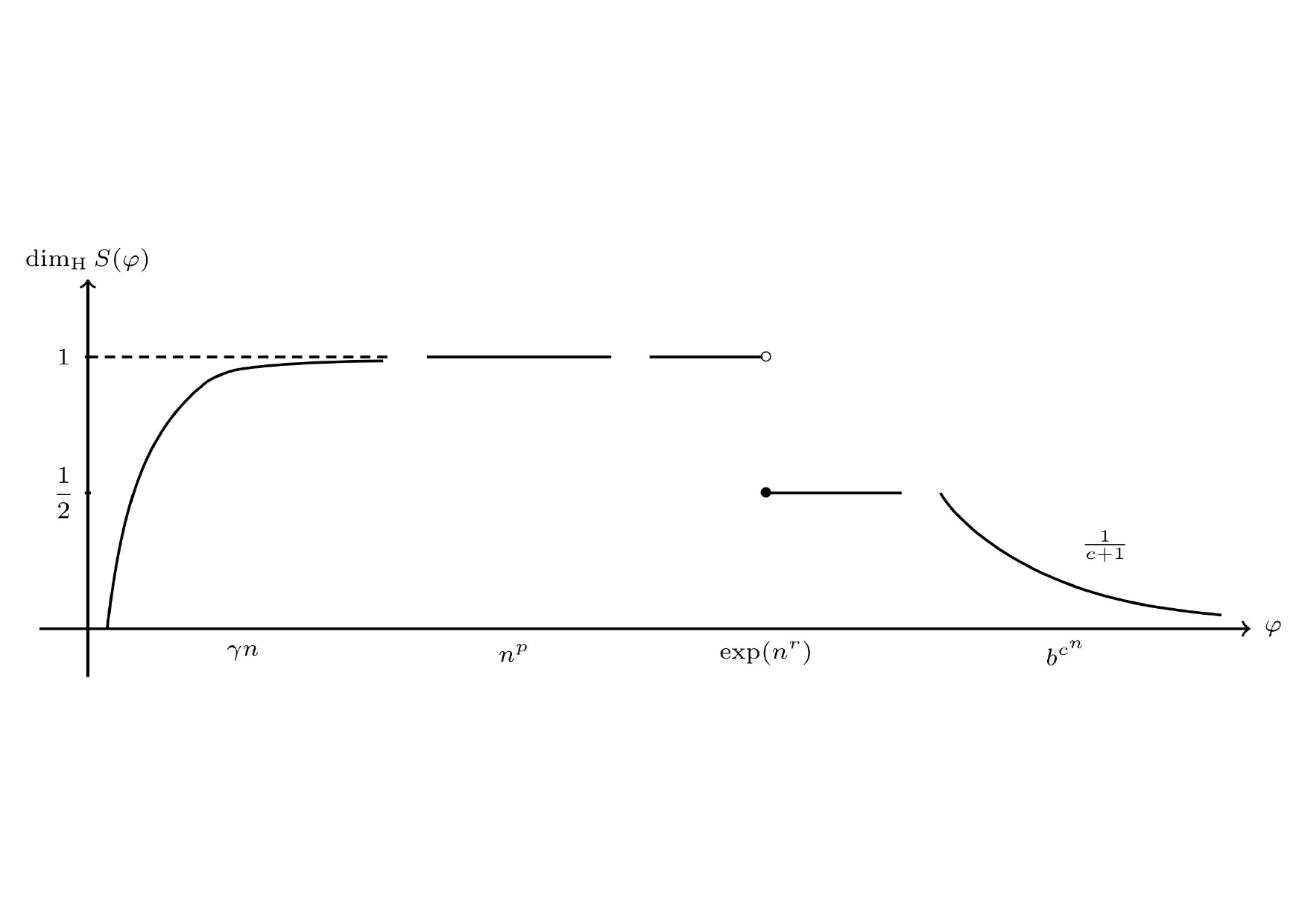}\\
 \caption{The illustration of the Hausdorff dimension of $S(\varphi)$}
\end{figure}

The dimension drop problem of $S(\varphi)$ in \eqref{Svarphijump} from $1$ to $1/2$ remains open, and it is different from the one discussed the results of Jarn\'{i}k-Good. Actually, fix a positive integer $N$ and a sparse sequence $\{n_k\}$ of positive integers with $\frac{n_{k+1}}{k+1}\geq n_1+\cdots+n_k$ for all $k\in \mathbb N$. For any $B>1$, if $x\in \mathbb I$ satisfies that $1\leq a_j(x) \leq N$ for all $j\neq n_k$ and $B^{n_k} <a_{n_k}(x) \leq 2 B^{n_k}$ for all $k\in \mathbb N$, then $\lim_{k\to \infty}S_{n_k-1}(x)/\varphi(n_k-1)=0$, and so $x\notin S(\varphi)$.
This means that the subset used by Wang and Wu \cite{WW} in dealing with the dimension drop problem of Jarn\'{i}k-Good is not a subset of $S(\varphi)$ in \eqref{Svarphijump}, and therefore, the result of Wang and Wu cannot fill the dimension gap of $S(\varphi)$. The reason is the lack of
understanding of the distribution of continued fractions with large coefficients. This is the primary motivation of the current paper.

\subsection{Statements of the main results}
We consolidate the dimension drop problems of both Jarn\'{i}k-Good and Liao-Rams into a problem on the distribution of continued fractions with large coefficients.

Let $\{n_k\}$ be a strictly increasing sequence of positive integers, and let $\{s_k\}$ and $\{t_k\}$ be two sequences of positive numbers with $s_k,t_k \to \infty$ as $k \to \infty$. We model the dimension drop problem of continued fractions by studying the sets
\begin{equation}\label{Ec}
E(\{n_k\},\{s_k\}, \{t_k\}):= E_{S}(\{n_k\}) \cap E_{L}(\{n_k\},\{s_k\}, \{t_k\}),
\end{equation}
where
\begin{align*}
E_{S}(\{n_k\}):=\big\{x\in \mathbb{I}: \{a_j(x)\}_{j\neq n_k (k \in\mathbb N)}\ \text{is bounded}\big\},
\end{align*}
and
\begin{align*}
E_{L}(\{n_k\},\{s_k\}, \{t_k\}):=\big\{x\in \mathbb{I}: s_k < a_{n_k}(x) \leq s_k+ t_k \ \text{for all large $k\in \mathbb N$}\big\}.
\end{align*}
We remark that $\{n_k\}$ is the sequence of positions where the coefficients are large, $\{s_k\}$ is the sequence of positive numbers controlling the growth of large coefficients, and $\{t_k\}$ is the sequence of positive numbers determining the number of possible values of large coefficients.
Note that $t_k \geq 1$ for every large $k$, so there exist integers in the interval $(s_k, s_k+t_k]$. Thus, $E(\{n_k\},\{s_k\}, \{t_k\})$ is nonempty.

We propose to compute the Hausdorff dimension of $E(\{n_k\},\{s_k\}, \{t_k\})$. To this end, we make the following assumptions:
\begin{enumerate}
  \item[(H1)] $\{n_k\}$ satisfies that $n_k/k \to \infty$ as $k \to \infty$;
   \item[(H2)]$\{s_k\}$ and $\{t_k\}$ are logarithmically equivalent in the sense that
  \begin{equation}\label{stequ}
\lim_{k \to \infty} \frac{\log s_k}{\log t_k} =1;
\end{equation}
  \item[(H3)] the limits
\begin{equation}\label{important}
\alpha:=\lim_{k \to \infty} \frac{1}{n_k} \sum^k_{j=1} \log s_j\ \ \ \text{and} \ \ \ \beta:= \lim_{k \to \infty} \frac{1}{n_k}\log s_k
\end{equation}
exist.
\end{enumerate}
(H1) shows that $\{n_k\}$ is a sparse sequence and increases to infinity with a super-linear growth speed. For (H2), we often take $t_k=s_k$ or $t_k =s_k/k$ when $s_k$ tends to infinity rapidly (see applications in subsection \ref{appli}), so it holds in many situations. For (H3), the notation $\alpha$ denotes the relative growth rate of sums $\sum^k_{j=1}\log s_j$ compared with the positions $n_k$, and $\beta$ denotes the limit of the ratios of $\log s_k$ and $n_k$. These two notations play a critical role in the study of the Hausdorff dimension of $E(\{n_k\},\{s_k\}, \{t_k\})$. We also remark that $\alpha$ and $\beta$ can be defined by replacing $\log s_k$ with $\log t_k$ or $\log(s_k+t_k)$ in \eqref{important} because of the equivalence of $\{s_k\}$ and $\{t_k\}$.

Our main method to characterise the Hausdorff dimension of $E(\{n_k\},\{s_k\}, \{t_k\})$ is from thermodynamic formalism. To see this, we introduce the \emph{Diophantine pressure function} $\mathrm{P}(\theta)$ of continued fractions:
\begin{equation}\label{pressuref}
\mathrm{P}(\theta):= \lim_{n \to \infty} \frac{1}{n}\log \sum_{a_1,\dots,a_n \in \mathbb{N}} q^{-2\theta}_n(a_1,\dots,a_n),\ \forall \theta >\frac{1}{2},
\end{equation}
where $q_n(a_1,\dots,a_n)$ is defined in Section \ref{Pre}. It was shown in \cite{KS07} that $\mathrm{P}(\theta)$ has a singularity at $1/2$, and is strictly decreasing, convex and real-analytic on $(1/2,\infty)$. The illustration of the Diophantine pressure function is described as follows. We remark that the Diophantine pressure function and its variants have been employed in the study of the subsystems of continued fractions \cite{JP,JP18,MU96,PV} and the multifractal analysis of continued fractions \cite{FLWW09,IJ,KS07,PW}. We refer the reader to \cite{MU96, Mayer} for detailed analyses of the Diophantine pressure function.
\begin{figure}[H]
  \centering
  \includegraphics[width=6cm, height=5cm]{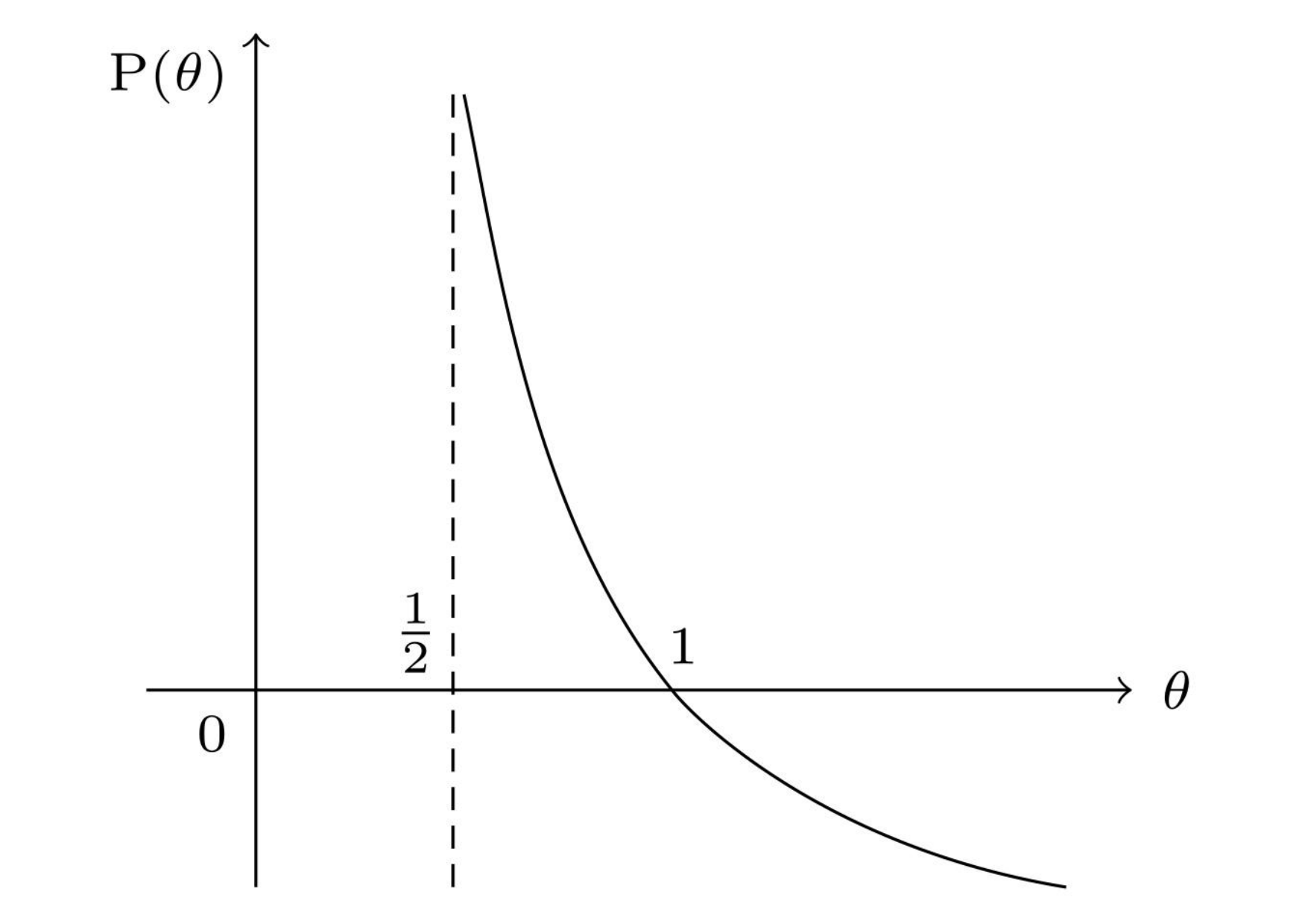}\\
 \caption{The illustration of the Diophantine pressure function}
\end{figure}

Note that $E_{S}(\{n_k\})$ has full Hausdorff dimension. We are now in a position to give a full description of the Hausdorff dimensions of $E(\{n_k\},\{s_k\}, \{t_k\})$ and $E_{L}(\{n_k\},\{s_k\}, \{t_k\})$ according to the values of $\alpha$.

\begin{thmx}\label{Alpha}
Under the hypotheses (H1), (H2) and (H3), we have
\begin{enumerate}
  \item[(i)] when $\alpha \in (0,\infty)$,
\[
\dim_{\rm H} E(\{n_k\},\{s_k\}, \{t_k\}) =\dim_{\rm H} E_{L}(\{n_k\},\{s_k\}, \{t_k\})=\theta(\alpha,\beta),
\]
where $\theta(\alpha,\beta)$ is the unique solution of the Diophantine pressure equation
\begin{equation}\label{PF}
\mathrm{P}(\theta)=(2\alpha -\beta)\theta - (\alpha -\beta);
\end{equation}

  \item[(ii)] when $\alpha =0$,
  \[
\dim_{\rm H} E(\{n_k\},\{s_k\}, \{t_k\})=\dim_{\rm H} E_{L}(\{n_k\},\{s_k\}, \{t_k\})=1;
  \]

  \item[(iii)] when $\alpha =\infty$,
  \[
  \dim_{\rm H} E(\{n_k\},\{s_k\}, \{t_k\}) = \frac{1}{2+\xi},
  \]
  where $\xi \in [0,\infty]$ is defined by
  \[
  \xi:= \limsup_{k \to \infty} \frac{\log s_{k+1}}{\log s_1+\cdots+\log s_k},
  \]
  and
  \[
  \dim_{\rm H} E_{L}(\{n_k\},\{s_k\}, \{t_k\})=\frac{1}{\gamma+1},
  \]
  where $\gamma \in [1,\infty]$ is defined by
  \[
\gamma:=\limsup_{k\to \infty}\exp\left(\frac{\log\log s_k}{n_k}\right).
\]
\end{enumerate}
\end{thmx}
For the convenience of reading, we write $E(\{n_k\},\{s_k\}, \{t_k\})$, $E_{S}(\{n_k\})$ and $E_{L}(\{n_k\},\{s_k\}, \{t_k\})$ as $E$, $E_{S}$ and $E_{L}$ respectively. Let us make some comments on the results of Theorem \ref{Alpha}.

%

\begin{remark}\label{rmk_2}
We compare the Hausdorff dimension of the sets $E$, $E_{S}$ and $E_{L}$. In view of Theorem \ref{Alpha}, we have
\begin{itemize}
  \item when $\alpha =0$,
  \[
  \dim_{\rm H}E_{S} = \dim_{\rm H} E_{L} =\dim_{\rm H}E =1;
  \]

  \item when $\alpha\in (0,\infty)$,
  \[
  \dim_{\rm H}E_{S}= 1> \dim_{\rm H} E_{L} =\dim_{\rm H}E>1/2;
  \]
  \item when $\alpha =\infty$,
  \begin{equation}\label{ELE}
\dim_{\rm H}E_{S}=1>1/2\geq \dim_{\rm H} E_{L} \geq \dim_{\rm H}E \geq 0.
  \end{equation}
\end{itemize}
There are many possible values for the Hausdorff dimension of $E_{L}$ and $E$ in \eqref{ELE}. For example, applying $n_k=k^2$ and $s_k=t_k=\exp(e^{k^2})$, we derive that $E_{L}$ and $E$ have Hausdorff dimension zero; for $n_k=k^4$ and $s_k=t_k=\exp(e^{k^2})$, we see that $E_{L}$ has Hausdorff dimension one-half but $E$ has Hausdorff dimension zero.
\end{remark}

\begin{remark}\label{rmk_3}
We are interested in the solutions of the Diophantine pressure equation in \eqref{PF}. In fact, the solution is the unique intersection of the Diophantine pressure function $\mathrm{P}(\theta)$ and the linear function $(2\alpha -\beta)\theta - (\alpha -\beta)$. See Figure 3 for the illustration of the solution of the Diophantine pressure equation.
\begin{figure}[H]
  \centering
  \includegraphics[width=8cm, height=6cm]{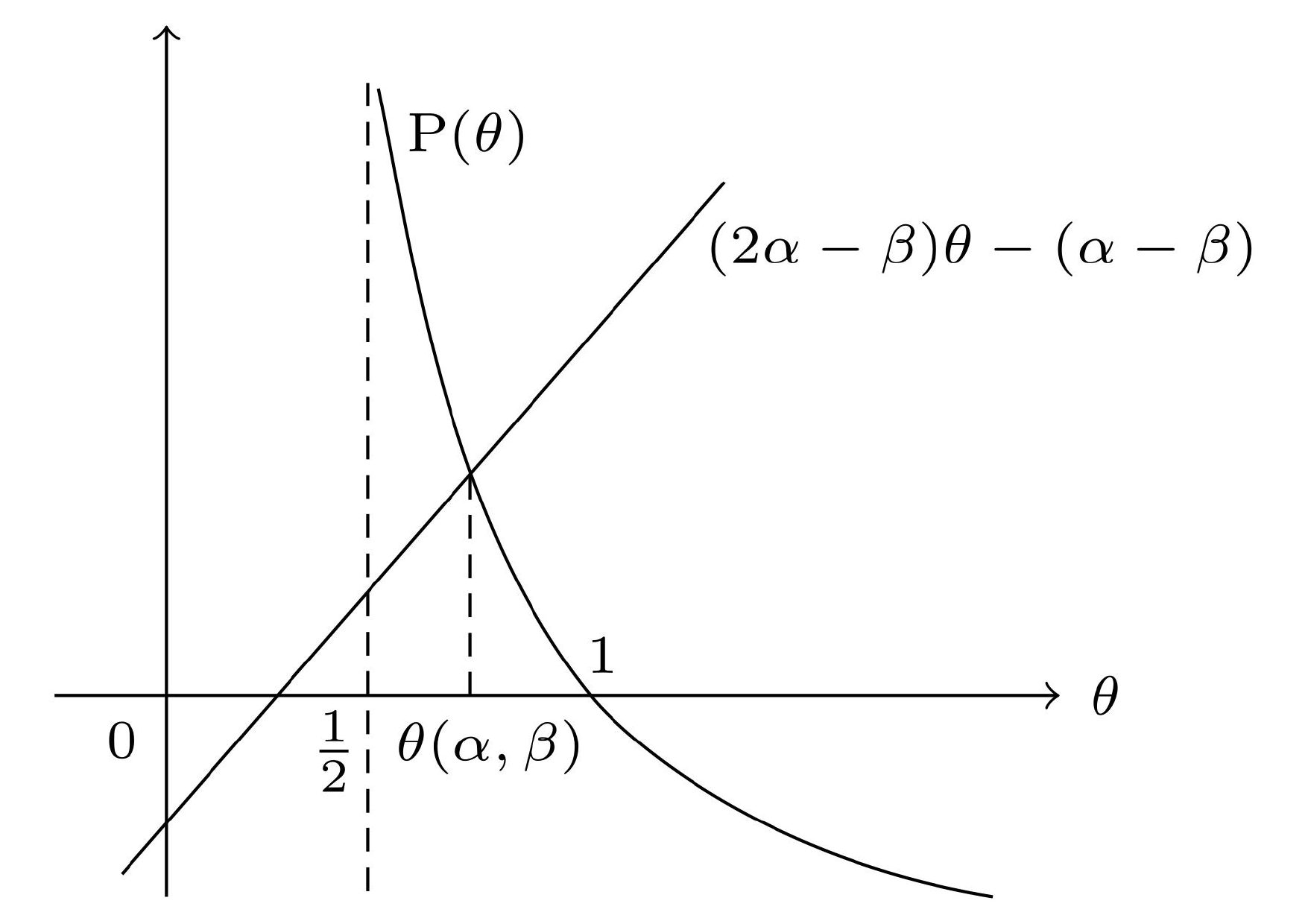}\\
 \caption{The illustration for the solution of the Diophantine pressure equation in \eqref{PF}.}
\end{figure}
According to the relationship between $\alpha$ and $\beta$, that is, $\alpha \geq \beta\geq 0$, we have three types of solutions of the Diophantine pressure equation:
\begin{itemize}
  \item Type I $\theta(\alpha, \alpha)$: the solution of $\mathrm{P}(\theta)=\alpha\theta$;
  \item Type II $\theta(\alpha, 0)$: the solution of $\mathrm{P}(\theta)=(2\theta -1)\alpha$;
  \item Type III $\theta(\alpha, \beta)$ with $0<\beta<\alpha$: the solution of $\mathrm{P}(\theta)=(2\alpha -\beta)\theta - (\alpha -\beta)$.
\end{itemize}
We also point out that if there exist $\alpha^\prime>0$ and $\beta^\prime \geq 0$ with $\alpha^\prime \geq \beta^\prime$ such that for all $\theta \in (1/2,1)$,
\[
(2\alpha^\prime -\beta^\prime)\theta - (\alpha^\prime -\beta^\prime)> (2\alpha -\beta)\theta - (\alpha -\beta),
\]
then we see from Figure 3 that $\theta(\alpha^\prime,\beta^\prime) < \theta(\alpha,\beta)$. In particular, the above three types of solutions have the relation: $\theta(\alpha, 0) <\theta(\alpha, \beta)<\theta(\alpha, \alpha)$.
\end{remark}

\subsection{Applications to coefficients, the sums and maximums of coefficients}\label{appli}
In this section, we will unify the studies of three applications of continued fractions, which was separately investigated in the literature. These three applications respectively correspond to three types of solutions of the Diophantine pressure equation mentioned in Remark \ref{rmk_3}.

The first application is to reprove the main result of Wang and Wu \cite{WW} on the Hausdorff dimension of the Borel-Bernstein set $A(\psi)$. The second application is to fulfill the dimension drop problem of $S(\varphi)$ in \eqref{Svarphijump} proposed by Liao and Rams. In the third application, we find another dimension drop phenomenon for the maximum of coefficients, and fulfill the dimension gap by the Type III solution of the Diophantine pressure equation. For the rest of this subsection, let us state these applications individually.

\subsubsection{Limsup set of coefficients}
For any $B>1$, recall that
\[
A(B):= \left\{x\in \mathbb I: a_n(x) \geq B^n\ \text{for infinitely many $n \in \mathbb N$}\right\}.
\]
The Hausdorff dimension of $A(B)$ has been extensively studied by Good \cite{Goo}, and Wang and Wu \cite{WW}. We will cover their results by means of Theorem \ref{Alpha}. To this end, let us briefly introduce the results of Wang and Wu on the Hausdorff dimension of $A(B)$. For any $n \in \mathbb N$ and $\rho \geq 0$, let
\[
f_{n}(\rho, B):= \sum_{a_1,\dots,a_n \in \mathbb N} \frac{1}{(B^n q^2_n)^\rho},
\]
where $q_n:=q_n(a_1,\dots,a_n)$ is defined in Section \ref{Pre}. Since $f_{n}(\cdot, B)$ is decreasing, we see that $f_{n}(\rho, B)<1$ as $\rho$ is large enough. Define
\[
s_{n}(B):=\inf\left\{\rho \geq 0: f_{n}(\rho, B)<1\right\}.
\]
It was proved in \cite[Lemma 2.4]{WW} that the limit $\lim_{n\to \infty}s_{n}(B)$ exists. Denote the limit by $s(B)$. The Lemma 2.6 of \cite{WW} shows that $s(B)$ is continuous with respect to $B\in (1,\infty)$, and
\[
s(B) \to 1\ \text{as}\ B \to 1\ \ \ \ \text{and}\ \ \ \ s(B) \to \frac{1}{2}\ \text{as}\ B \to \infty.
\]
Furthermore, Wang and Wu \cite[Theorem 3.1]{WW} proved that the Hausdorff dimension of $A(B)$ is $s(B)$. By Theorem \ref{Alpha}, we can obtain another formula for the Hausdorff dimension of $A(B)$ in terms of the Type I solution of the Diophantine pressure equation, which gives more regularities of $s(B)$.

Denote by $\theta(\log B):=\theta(\log B, \log B)$ the unique solution of $\mathrm{P}(\theta)=\theta\log B$.

\begin{thmx} \label{ABN}
For any $B>1$,
\[
\dim_{\rm H}A(B)= \theta(\log B).
\]
\end{thmx}

Theorem \ref{ABN} indicates that the Hausdorff dimension of $A(B)$ belongs to the Type I solutions of the Diophantine pressure equation. Moreover, $B\mapsto\theta(\log B)$ is real analytic, strictly decreasing and convex on $(1,\infty)$.

We give some comments for the lower bound of Theorem \ref{ABN}. Let $\{n_k\}$ be a strictly increasing sequence of positive integers (to be determined). Then $E(\{n_k\},\{s_k\}, \{t_k\})$ with $s_k=t_k=B^{n_k}$ is a subset of $A(B)$. Note that $\beta = \lim_{k \to \infty}\log s_k/n_k =\log B$, by Theorem \ref{Alpha}, we see that the Hausdorff dimension of $A(B)$ is not less than the unique solution of $\mathrm{P}(\theta)= (2\alpha -\log B)\theta - (\alpha -\log B)$, where $\alpha \geq \log B$ is defined as in \eqref{important}. To obtain the optimal lower bound of $\dim_{\rm H}A(B)$, it is sufficient to find the minimum of
\[
\alpha \mapsto (2\alpha -\log B)\theta - (\alpha -\log B)= (2\theta-1)\alpha+ (1-\theta)\log B
\]
for fixed $\theta>1/2$. Since the function is increasing on $[\log B, \infty)$, its minimum is achieved at $\alpha =\log B$. Hence $\theta(\log B)$ is the optimal lower bound of the Hausdorff dimension of $A(B)$. To see this, taking $n_k=2^{k^2}$, we have $\alpha=\beta=\log B$.

Based on the Hausdorff  dimension of $A(B)$, Wang and Wu \cite[Theorem 4.2]{WW} further obtained the Hausdorff dimension of $A(\psi)$.
In the light of Theorem \ref{ABN}, with the convention $\frac{1}{\infty}=0$, we can also give a full description for the Hausdorff dimension of $A(\psi)$.

\begin{thmx} \label{HDApsi}
Let $\psi: \mathbb{R}^+ \to \mathbb{R}^+$ be a function. Denote
\begin{equation}\label{DefBandb}
B_\psi:=\liminf_{n \to \infty}\exp\left(\frac{\log\psi(n)}{n}\right) \ \text{and}\ \ b_\psi:=\liminf_{n\to\infty}\exp\left(\frac{\log\log\psi(n)}{n}\right).
\end{equation}
Then
\[
\dim_{\rm H}A(\psi)=
\left\{
  \begin{array}{ll}
    1, & \hbox{$B_\psi=1$;} \vspace{0.15cm}\\
  \theta(\log B_\psi), & \hbox{$1<B_\psi<\infty$;}\vspace{0.12cm} \\
    \dfrac{1}{b_\psi+1}, & \hbox{$B_\psi=\infty$.}
  \end{array}
\right.
\]
\end{thmx}

\subsubsection{Asymptotic behaviours of the sum of coefficients}

We will apply Theorem \ref{Alpha} to the Hausdorff dimension of the set
\begin{equation*}
S(\varphi):=\left\{x\in \mathbb I: \lim_{n \to \infty}\frac{S_n(x)}{\varphi(n)}=1 \right\},
\end{equation*}
where $\varphi: \mathbb{R}^+ \to \mathbb R^+$ is an increasing function with $\varphi(n) \to \infty$ as $n \to \infty$. Particularly, we propose to fill the dimension gap of $S(\varphi)$ in \eqref{Svarphijump}.

The Hausdorff dimension of $S(\varphi)$ for the linear function $\varphi$ had been determined by Iommi and Jordan \cite{IJ}, so we are interested in the dimensions of $S(\varphi)$ for super-linear functions. There are other motivations to study this kind of problem. Philipp \cite[Theorem 1]{Phi} proved that if $\varphi$ satisfies that $\varphi(n)/n$ is non-decreasing, then for Lebesgue almost every $x\in\mathbb I$,
\[
\limsup_{n\to \infty}\frac{S_n(x)}{\varphi(n)}=0\ \ \ \text{or}\ \ \ \limsup_{n\to \infty}\frac{S_n(x)}{\varphi(n)}=\infty
\]
according to the series $\sum_{n \geq 1}1/\varphi(n)$ converges or diverges. This indicates the limit behaviours of $S_{n}(x)$ is complicated. To understand it better, much attention has been paid to the Hausdorff dimension of $S(\varphi)$ for super-linear functions $\varphi$. It was proved by Wu and Xu \cite[Section 4]{WX} that if
\begin{equation*}
\lim_{n \to \infty} \frac{\varphi(n)}{n}= \infty\ \ \ \text{and}\ \ \ \limsup_{n \to \infty}\frac{\log\log\varphi(n)}{\log n}<\frac{1}{2},
\end{equation*}
then $\dim_{\rm H}S(\varphi) =1$. Applying Theorem \ref{Alpha} to $S(\varphi)$, we extend the results of Wu and Xu \cite{WX} to a very large class of functions $\varphi$.

\begin{thmx}\label{AM}
Let $\varphi:\mathbb{R}^+\to\mathbb{R}^+$ be an increasing function with $\varphi(n)/n \to \infty$ as $n \to \infty$. Assume that
\[
\limsup_{n \to \infty} \frac{\log \varphi(n)}{\sqrt{n}}=0.
\]
Then
\[
\dim_{\rm H} S(\varphi)=1.
\]
\end{thmx}

As a consequence of Theorem \ref{AM}, for $\varphi(n)=\exp(\sqrt{n}\cdot r(n))$ with $r(n) \to 0$ as $n \to \infty$, we have the Hausdorff dimension of $S(\varphi)$ is one. This generalises the result of Liao and Rams \cite[Theorem 1.2]{LR16}, which requires some extra regular conditions of the function $r$.

As showed in \eqref{Svarphijump}, there is a jump discontinuity of the Hausdorff dimension of $S(\varphi)$ from $1$ to $1/2$ in the class $\varphi(n)=\exp(n^r)$ at $r=1/2$. We give some sufficient conditions for $S(\varphi)$ to have a dimension drop.

\begin{thmx}\label{one-halfM}
Let $\varphi:\mathbb{R}^+\to\mathbb{R}^+$ be an increasing function with $\varphi(n)/n \to \infty$ as $n \to \infty$. We have
\begin{enumerate}
  \item[(i)] if for any $\varepsilon>0$, there exists $\delta>0$ such that for all large $n$,
\begin{equation}\label{ed}
\log\varphi(n+\varepsilon\sqrt{n}) -\log \varphi(n) \geq \delta,
\end{equation}
then $\dim_{\rm H} S(\varphi) \leq 1/2$;
\item[(ii)] if
\begin{equation}\label{limsupinfinity}
\limsup_{n \to \infty}\frac{\log \varphi(n)}{n}=\infty,
\end{equation}
then $\dim_{\rm H} S(\varphi) \leq 1/2$;
\end{enumerate}
\end{thmx}

As a corollary, we obtain the Hausdorff dimension of $S(\varphi)$ for some critical cases where $\varphi(n)$ is ``around" the function $\exp(c\sqrt{n})$ with $c>0$. This refines the results of Liao and Rams \cite[Theorem 1.1]{LR16}.

\begin{corollary}\label{onehalpCor1}
Let $\varphi(n) = \exp(c\sqrt{n} +r_1(n))$ with $0<c<\infty$, where $r_1:\mathbb{R}^+\to\mathbb{R}^+$ is increasing and $r_1(n)/\sqrt{n}\to 0$ as $n \to \infty$.
Then
\[
\dim_{\rm H} S(\varphi) =\frac{1}{2}.
\]
\end{corollary}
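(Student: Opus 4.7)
The plan is to prove matching bounds $\dim_{\rm H} S(\varphi) \leq 1/2$ via Theorem \ref{one-halfM}(i) and $\dim_{\rm H} S(\varphi) \geq 1/2$ via Theorem \ref{Alpha}(iii) applied to a carefully tailored subset of $S(\varphi)$.

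For the upper bound, write $\log \varphi(n) = c\sqrt{n} + r_1(n)$. Since $r_1$ is increasing and non-negative, for any $\varepsilon > 0$ and large $n$,
\begin{align*}
\log\varphi(n+\varepsilon\sqrt{n}) - \log\varphi(n) \;\geq\; c\bigl(\sqrt{n+\varepsilon\sqrt{n}}-\sqrt{n}\bigr) \;=\; \frac{c\varepsilon\sqrt{n}}{\sqrt{n+\varepsilon\sqrt{n}}+\sqrt{n}},
\end{align*}
which converges to $c\varepsilon/2 > 0$. Hence $\delta := c\varepsilon/4$ verifies hypothesis \eqref{ed}, and Theorem \ref{one-halfM}(i) yields $\dim_{\rm H} S(\varphi) \leq 1/2$.

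For the lower bound, I construct a subset of $S(\varphi)$ using Theorem \ref{Alpha}(iii). Choose a sparse sequence $\{n_k\}$ of positive integers growing like $k^{3/2}$ (for instance, defined inductively by $n_k - n_{k-1} = \lfloor n_{k-1}^{1/3} \rfloor$), so that $n_k/k \sim k^{1/2} \to \infty$, validating (H1), and moreover the gap estimate $\sqrt{n_k}-\sqrt{n_{k-1}} = O(n_k^{-1/6})$ forces $\log\varphi(n_k)-\log\varphi(n_{k-1}) \to 0$ (the $r_1$-contribution is controlled by choosing, if necessary, a further subsequence along which $r_1$ does not jump, exploiting $r_1(n)=o(\sqrt n)$ and monotonicity). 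Set $s_k := \lfloor (1-1/k)(\varphi(n_k)-\varphi(n_{k-1})) \rfloor$ and $t_k := \lfloor (2/k)(\varphi(n_k)-\varphi(n_{k-1})) \rfloor$; both satisfy $\log s_k, \log t_k \sim c\sqrt{n_k} \sim c k^{3/4}$, verifying (H2). A direct calculation gives
\begin{align*}
\alpha = \lim_{k\to\infty} \frac{1}{n_k}\sum_{j=1}^k \log s_j = \infty, \qquad \xi = \limsup_{k\to\infty} \frac{\log s_{k+1}}{\sum_{j=1}^k \log s_j} = 0,
\end{align*}
so Theorem \ref{Alpha}(iii) yields $\dim_{\rm H} E(\{n_k\},\{s_k\},\{t_k\}) = 1/(2+\xi) = 1/2$. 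To finish, I verify $E \subseteq S(\varphi)$: for $x\in E$, telescoping the nearly-exact matches $a_{n_j}(x) \approx \varphi(n_j)-\varphi(n_{j-1})$ yields $S_{n_k}(x) = (1+o(1))\varphi(n_k)$, using that the bounded contributions from positions $j \neq n_k$ accumulate to $O(n_k)=o(\varphi(n_k))$ and the slack of width $t_k$ summed contributes $o(\varphi(n_k))$; on the intervals $n \in [n_k,n_{k+1})$ the ratio $\varphi(n)/\varphi(n_k) \to 1$ by construction of $\{n_k\}$. Thus $E \subseteq S(\varphi)$, giving $\dim_{\rm H} S(\varphi) \geq 1/2$.

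The main obstacle is the lower bound, specifically controlling the error term $r_1$. While $r_1(n)/\sqrt{n} \to 0$ ensures $r_1$ grows sublinearly in $\sqrt{n}$, without additional regularity an increasing $r_1$ may still exhibit jumps of order up to $\varepsilon\sqrt{n_k}$ between consecutive $n_k$, which would spoil the convergence $\varphi(n)/\varphi(n_k) \to 1$ required for $E \subseteq S(\varphi)$. The remedy is to adapt the sparse sequence $\{n_k\}$ to $r_1$, using its monotonicity and the $o(\sqrt n)$ growth to select $\{n_k\}$ along which $r_1(n_{k+1})-r_1(n_k) \to 0$ while still keeping $n_k/k \to \infty$. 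This adaptive choice is the delicate step; once it is made, all remaining verifications of (H1)--(H3) and of the dimension formula are routine applications of Theorem \ref{Alpha}.
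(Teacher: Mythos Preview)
Your upper bound argument is correct and identical to the paper's.

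Your lower bound, however, has a genuine gap that cannot be repaired along the lines you suggest. The obstacle you yourself flag---controlling $r_1(n_{k+1})-r_1(n_k)\to 0$ by an ``adaptive choice'' of $\{n_k\}$---is in fact insurmountable under the stated hypotheses. Take for instance $r_1(n)=\lfloor n^{1/4}\rfloor$, which is increasing and $o(\sqrt{n})$. This $r_1$ has a jump of size $1$ at every fourth power $m^4$. Any strictly increasing sequence $\{n_k\}$ with $n_k\to\infty$ must contain infinitely many $k$ for which some $m^4\in(n_k,n_{k+1})$; for such $k$ and $n=m^4$ one has $\varphi(n)/\varphi(n_k)\ge e$, while $S_n(x)\approx\varphi(n_k)$ for $x\in E$ (since the coefficients at positions $\neq n_j$ are bounded). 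Hence $S_n(x)/\varphi(n)\le e^{-1}+o(1)$ along this subsequence, so $E\not\subseteq S(\varphi)$. Taking a further subsequence of $\{n_k\}$ only enlarges the gaps and makes things worse. In short, with a \emph{sparse} sequence $\{n_k\}$ the partial sum $S_n(x)$ is essentially constant on each block $[n_k,n_{k+1})$, and you cannot force $\varphi$ to be nearly constant there.

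The paper avoids this difficulty by abandoning sparse subsequences altogether for the lower bound: it sets $u_n:=\varphi(n)-\varphi(n-1)$, $v_n:=u_n/n$ at \emph{every} position $n$ and invokes Lemma~\ref{LRlemma} for the set $F(\{u_n\},\{v_n\})$. Because a large coefficient is placed at each $n$, any jump of $r_1$ is absorbed into $a_n(x)$, and $S_n(x)/\varphi(n)\to 1$ follows by telescoping together with the elementary estimate $\sum_{k\le n}u_k/k=o(\varphi(n))$. The crude bounds $e^{c\sqrt{n-1}}/(2\sqrt n)<u_n<e^{(c+1)\sqrt n}$ then yield $\limsup_k \log u_{k+1}/\sum_{j\le k}\log u_j=0$, so Lemma~\ref{LRlemma} gives $\dim_{\rm H}F=1/2$. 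This route is both simpler and immune to the jump phenomenon; Theorem~\ref{Alpha} is not the right tool here because its hypothesis (H1) forces sparsity.
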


\begin{corollary}\label{onehalpCor2}
Let $\varphi(n)=\exp(c\sqrt{n}+r_2(n))$ with $0<c<\infty$, where $r_2:\mathbb{R}^+ \to \mathbb{R}$ is a function such that $c\sqrt{n}+r_2(n)$ is increasing and satisfies
\begin{equation}\label{maxine}
\lim_{m \to \infty}\max\left\{\left|r_2(k)-r_2(m^2)\right|:m^2< k \leq(m+1)^2\right\}=0.
\end{equation}
Then
\[
\dim_{\rm H} S(\varphi)  =\frac{1}{2}.
\]
\end{corollary}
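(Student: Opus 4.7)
The plan is to prove the two-sided dimension bound separately: Theorem \ref{one-halfM}(i) will give the upper bound $\dim_{\rm H}S(\varphi)\le 1/2$, and a subset construction via Theorem \ref{Alpha}(iii) will give the lower bound $\dim_{\rm H}S(\varphi)\ge 1/2$.

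For the upper bound, I would verify condition \eqref{ed}. Expanding,
\[
\log\varphi(n+\varepsilon\sqrt{n})-\log\varphi(n)=c\bigl(\sqrt{n+\varepsilon\sqrt{n}}-\sqrt{n}\bigr)+\bigl(r_2(n+\varepsilon\sqrt{n})-r_2(n)\bigr).
\]
The first summand clearly converges to $c\varepsilon/2>0$. For the second, I would set $m=\lfloor\sqrt{n}\rfloor$ and $m'=\lfloor\sqrt{n+\varepsilon\sqrt{n}}\rfloor$; the gap $m'-m$ is bounded above by a constant depending only on $\varepsilon$. Telescoping $r_2(n+\varepsilon\sqrt{n})-r_2(n)$ across the blocks $(j^{2},(j+1)^{2}]$ for $m\le j\le m'$, together with the two boundary pieces within the initial and terminal blocks, every difference is controlled by $\max\{|r_2(k)-r_2(j^2)|:j^2<k\le(j+1)^2\}$, which vanishes as $j\to\infty$ by \eqref{maxine}. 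Since only a bounded number of pieces appear, the second summand tends to $0$. Thus $\log\varphi(n+\varepsilon\sqrt{n})-\log\varphi(n)\ge c\varepsilon/4=:\delta$ for all large $n$, and Theorem \ref{one-halfM}(i) applies.

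For the lower bound, I would exhibit a subset $E(\{n_k\},\{s_k\},\{t_k\})\subseteq S(\varphi)$ of Hausdorff dimension $1/2$. The key is to pick $\{n_k\}$ sparse enough that (H1) holds, yet close enough that $\varphi$ varies negligibly on each interval $[n_k,n_{k+1})$, and to calibrate the large coefficients so that $\sum_{j\le k}a_{n_j}(x)$ telescopes to $\varphi(n_k)$. Concretely, I would take $n_k:=\lfloor k^{3/2}\rfloor$, which satisfies (H1) and yields $\sqrt{n_{k+1}}-\sqrt{n_k}\to 0$; combined with the block analysis above, this gives $\varphi(n_{k+1})/\varphi(n_k)\to 1$. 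Then set $\Delta_k:=\lfloor\varphi(n_k)\rfloor-\lfloor\varphi(n_{k-1})\rfloor$, $t_k:=\lceil\Delta_k/k\rceil$, and $s_k:=\Delta_k-t_k$; these satisfy $s_k,t_k\to\infty$ and $\log s_k/\log t_k\to 1$, so (H2) holds. For every $x\in E$, the constraint $a_{n_j}(x)\in(s_j,s_j+t_j]$ forces $a_{n_j}(x)=\Delta_j+O(t_j)$; summing and using the estimate $\sum_{j\le k}t_j=o(\varphi(n_k))$ derived from the geometric decay of $\Delta_j/\varphi(n_k)$, one gets $\sum_{j\le k}a_{n_j}(x)=\varphi(n_k)(1+o(1))$. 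The contribution from the bounded coefficients is $O(n)$, which is negligible compared with $\varphi(n_k)$, so $S_n(x)/\varphi(n)\to 1$ and $E\subseteq S(\varphi)$.

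Finally, since $\log s_k\sim\log\Delta_k\sim c\sqrt{n_{k-1}}\sim ck^{3/4}$, a direct computation shows $\alpha=\lim_k n_k^{-1}\sum_{j\le k}\log s_j=\infty$, $\beta=\lim_k\log s_k/n_k=0$, and $\xi=\limsup_k\log s_{k+1}/\sum_{j\le k}\log s_j=0$, verifying (H3). Theorem \ref{Alpha}(iii) then gives $\dim_{\rm H}E=1/(2+\xi)=1/2$. The main technical obstacle will be the bookkeeping around $r_2$: both in the upper bound, to ensure $\varphi$ behaves almost as a step-function on scale $\sqrt{n}$, and in the lower bound, to guarantee that the (H3) limits genuinely exist (rather than merely $\limsup/\liminf$) and that all telescoping errors remain $o(\varphi(n_k))$ despite potentially unbounded drifts of $r_2$ across blocks; each such step will require a careful invocation of \eqref{maxine}.
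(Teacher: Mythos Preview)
Your upper bound is correct and coincides with the paper's argument via Theorem~\ref{one-halfM}(i).

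For the lower bound you take a different route from the paper: you build a sparse-position subset $E(\{n_k\},\{s_k\},\{t_k\})$ and invoke Theorem~\ref{Alpha}(iii), whereas the paper constrains \emph{every} coefficient and invokes Lemma~\ref{LRlemma} directly on $F(\{\widetilde u_n\},\{\widetilde v_n\})$ with $\widetilde u_n=\varphi(n)-\varphi(n-1)+n$, $\widetilde v_n=\widetilde u_n/\log n$. The paper's route is simpler because Lemma~\ref{LRlemma} only needs a $\limsup$, and the trivial bound $\log\widetilde u_n\ge\log n$ already forces $\limsup_n(\log\widetilde u_{n+1})/\sum_{k\le n}\log\widetilde u_k=0$.

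Your route has a genuine gap. The assertions ``$s_k,t_k\to\infty$'' and ``$\log s_k\sim\log\Delta_k\sim c\sqrt{n_{k-1}}\sim ck^{3/4}$'' are not justified: hypothesis~\eqref{maxine} bounds $|r_2(n_k)-r_2(n_{k-1})|$ only by $2\epsilon_m$ with $\epsilon_m\to0$ at an \emph{uncontrolled} rate, while $c(\sqrt{n_k}-\sqrt{n_{k-1}})\sim(3c/4)k^{-1/4}$. If $\epsilon_m$ decays more slowly than $k^{-1/4}$ (which~\eqref{maxine} permits), the only lower bound you retain is $\log\varphi(n_k)-\log\varphi(n_{k-1})\ge0$, so $\Delta_k$ can be $0$ or $1$ along a subsequence. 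Then $s_k,t_k$ need not diverge, (H2) can fail, and you lose the asymptotic needed to verify that the limits in~(H3) exist (let alone that $\alpha=\infty$). This is precisely why the paper inserts the ``$+n$'' safeguard and works with a lemma requiring only a $\limsup$. Your acknowledgement that ``(H3) limits genuinely exist'' is an obstacle is apt, but it is not mere bookkeeping: with your present $n_k$ and $s_k$, the construction can break down. A repair would require either padding $\Delta_k$ (which then destroys the claimed asymptotics for $\log s_k$ and forces you to redo the (H3) analysis from scratch) or switching to the paper's Lemma~\ref{LRlemma} approach.
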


Applying the first statement of Theorem \ref{Alpha} to $S(\varphi)$, we solve the dimension drop problem of $S(\varphi)$ in \eqref{Svarphijump}.
Actually, by Theorems \ref{AM} and \ref{one-halfM}, in order to fill the dimension gap, the potential function $\varphi$ needs to satisfy the properties
\[
\limsup_{n\to \infty}\frac{\log\varphi(n)}{\sqrt{n}}>0\ \ \text{and}\ \ \limsup_{n\to \infty}\frac{\log\varphi(n)}{n}<\infty.
\]
The following result gives some examples of functions of all possible orders to fill the dimension gap of $S(\varphi)$.

\begin{thmx}\label{CM}
We have
\begin{enumerate}
  \item[(i)] for $\varphi(n)=\exp(c\cdot\frac{(\lfloor dn^{1-r}\rfloor)^{r/(1-r)}}{d^{r/(1-r)}})$ with $c,d\in (0,\infty)$ and $r\in [1/2,1)$,
\[
\dim_{\rm H} S(\varphi) = \eta_d(c),
\]
where $\eta_d(c)$ is the unique solution of $\mathrm{P}(\theta)= cd(1-r)\left(2\theta-1\right)$;

  \item[(ii)] for $\varphi(n)= \exp(c \cdot \exp(\gamma \lfloor \frac{\log n}{\gamma}\rfloor))$ with $c, \gamma\in (0,\infty)$,
  \[
\dim_{\rm H} S(\varphi)= \xi_\gamma(c),
\]
where $\xi_\gamma(c)$ is the unique solution of
\[
\mathrm{P}(\theta)= c\left(\frac{e^\gamma+1}{e^\gamma-1}\theta- \frac{1}{e^\gamma-1}\right).
\]
\end{enumerate}
\end{thmx}

In particular, taking $d=1$ and $r=1/2$ in (i) of Theorem \ref{CM}, we have $\varphi(n)=e^{c\lfloor n\rfloor}$ and the Hausdorff dimension of $S(\varphi)$ is the unique solution of $\mathrm{P}(\theta)= c\left(\theta-1/2\right)$, which fills the dimension gap in \eqref{Svarphijump}. Actually, there are uncountable many functions to fill such a dimension gap, such as $\varphi(n)=p(n)e^{c\lfloor n\rfloor}$ or $\varphi(n)=e^{c\lfloor n\rfloor}/p(n)$, where $p(n)$ is a polynomial function.

\subsubsection{Limsup and liminf sets of the maximum of coefficients}
For any $x\in \mathbb I$ and $n \in \mathbb N$, let
\[
M_n(x):= \max\left\{a_1(x), a_2(x),\dots,a_n(x)\right\}
\]
be the maximum of the first $n$ coefficients in the continued fraction expansion of $x$. For any $\psi:\mathbb{R}^+\to\mathbb{R}^+$, let
\[
M(\psi):=\left\{x\in \mathbb{I}:M_n(x) \geq \psi(n)\ \text{for infinitely many $n \in \mathbb N$}\right\}.
\]
Philipp \cite[Theorem 3]{Phi75} proved a zero-one law for $M_n(x)$: if $\psi$ is nondecreasing, then $M(\psi)$ has full or null Lebesgue measure according to $\sum_{n \geq 1} 1/\psi(n)$ diverges or converges.

We show that $A(\psi)$ and $M(\psi)$ have the same Hausdorff dimension.

\begin{thmx}\label{Mpsi}
Let $\psi: \mathbb{R}^+ \to \mathbb{R}^+$ be a function. Denote
\[
B_\psi:=\liminf_{n \to \infty}\exp\left(\frac{\log\psi(n)}{n}\right) \ \text{and}\ \ b_\psi:=\liminf_{n\to\infty}\exp\left(\frac{\log\log\psi(n)}{n}\right).
\]
Then
\[
\dim_{\rm H}M(\psi)=
\left\{
  \begin{array}{ll}
    1, & \hbox{$B_\psi=1$;} \vspace{0.15cm}\\
  \theta(\log B_\psi), & \hbox{$1<B_\psi<\infty$;}\vspace{0.12cm} \\
    \dfrac{1}{b_\psi+1}, & \hbox{$B_\psi=\infty$.}
  \end{array}
\right.
\]
\end{thmx}

For any $\psi:\mathbb{R}^+\to\mathbb{R}^+$, let
\[
\widehat{M}(\psi):=\left\{x\in \mathbb{I}:M_n(x) \geq \psi(n)\ \text{for sufficiently large $n \in \mathbb N$}\right\},
\]
which can be treated as the dual set of $M(\psi)$. For any $C>1$, denote by $\widehat{M}(C)$ the set $\widehat{M}(\psi)$ with $\psi(n)=C^n$ if no confusion arises.

\begin{thmx}\label{BhatMN}
For any $C>1$,
\[
\dim_{\rm H}\widehat{M}(C)= \widehat{\theta}(\log C),
\]
where $\widehat{\theta}(\log C)$ is the unique solution of $\mathrm{P}(\theta)=(\sqrt{\theta}+\sqrt{2\theta-1})^2\log C$.
\end{thmx}

Let us make some comments on the lower bound of Theorem \ref{BhatMN}. For $\gamma >0$ (to be determined), let $n_k:=\lfloor e^{\gamma k}\rfloor$ and $s_k=t_k:= C^{e^{\gamma (k+1)}}$. Then $E(\{n_k\},\{s_k\},\{t_k\})$ is a subset of $\widehat{M}(C)$, and so $\dim_{\rm H}\widehat{M}(C) \geq \dim_{\rm H} E(\{n_k\},\{s_k\},\{t_k\})$. For this special case, we have
\[
\alpha = \lim_{k \to \infty} \frac{1}{n_k}\sum^k_{j=1}\log s_j = \frac{e^{2\gamma}}{e^{\gamma}-1}\log C\ \text{and}\ \beta = \lim_{k \to \infty} \frac{\log s_k}{n_k}=e^{\gamma}\log C.
\]
Then, it follows from Theorem \ref{Alpha} that the Hausdorff dimension of $\widehat{M}(C)$ is not less than the unique solution of
\begin{equation}\label{TIII}
\mathrm{P}(\theta) = e^\gamma\left(\frac{e^\gamma+1}{e^\gamma-1} \theta -  \frac{1}{e^\gamma-1}\right)\log C.
\end{equation}
To obtain the optimal lower bound of $\dim_{\rm H}\widehat{M}(C)$, it is sufficient to compute the minimum of the function
\[
f_{\theta}(\gamma):=e^\gamma\left(\frac{e^\gamma+1}{e^\gamma-1} \theta -  \frac{1}{e^\gamma-1}\right)
\]
for fixed $1/2<\theta<1$. A simple calculation shows that $f_{\theta}(\gamma)$ has a unique minimum at
\[
\gamma=\log \left(1+\sqrt{\frac{2\theta-1}{\theta}} \right),\ \ \text{i.e.,}\  e^{\gamma}= 1+\sqrt{\frac{2\theta-1}{\theta}}.
\]
Moreover, the minimum of $f_{\theta}(\gamma)$ is
\[
\sqrt{\frac{\theta}{2\theta-1}}\left(1+\sqrt{\frac{2\theta-1}{\theta}}\right)\left( \left(2+\sqrt{\frac{2\theta-1}{\theta}}\right)\theta-1\right)=( \sqrt{\theta}+\sqrt{2\theta-1})^2.
\]
Hence the solution of $\mathrm{P}(\theta)=( \sqrt{\theta}+\sqrt{2\theta-1})^2\log C$ is the desired lower bound of $\dim_{\rm H}\widehat{M}(C)$.
Actually, $\widehat{\theta}(\log C)$ is obtained from the optimization of the Type III solutions of the Diophantine pressure equation in \eqref{TIII}.

Based on the dimension of $\widehat{M}(C)$, we obtain a full description for the Hausdorff dimension of $\widehat{M}(\psi)$.

\begin{thmx}\label{AMhat}
Let $\psi: \mathbb{R}^+ \to \mathbb{R}^+$ be a function.
Denote
\begin{equation}\label{DefC}
C_\psi:=\limsup_{n \to \infty}\exp\left(\frac{\log\psi(n)}{n}\right) \ \ \ \text{and}\ \ \ c_\psi:=\limsup_{n \to \infty}\exp\left(\frac{\log\log\psi(n)}{n}\right).
\end{equation}
Then
\begin{enumerate}
  \item[(i)] If $C_\psi=1$, then $\dim_{\rm H} \widehat{M}(\psi)=1$;
  \item[(ii)] Assume that the limsup in the definition of $C$ is a limit. If $1<C_\psi<\infty$, then $$\dim_{\rm H} \widehat{M}(\psi)=\widehat{\theta}(\log C_\psi);$$
  \item[(iii)] If $C_\psi=\infty$, then $$\dim_{\rm H} \widehat{M}(\psi)=\frac{1}{c_\psi+1}.$$
\end{enumerate}
\end{thmx}

\begin{remark}
We remark that the hypothesis ``the limsup in the definition of $C_\psi$ is a limit" in (ii) cannot be removed. For example, for $C>1$, consider the function
\[
\phi(n):=C^{2^{k^2}}\ \ \ \text{as}\ \ \ 2^{k^2} \leq n<2^{(k+1)^2}, \ \ \ \text{for every}\ \  k\in \mathbb{N}\cup\{0\},
\]
we have
\[
\limsup_{n \to \infty}\frac{\log\phi(n)}{n}= \log C,
\]
but the Hausdorff dimension of $\widehat{M}(\phi)$ is $\theta(\log C)$, namely the unique solution of the Diophantine pressure equation $\mathrm{P}(\theta)=\theta\log C$. To see this, for any $k \in \mathbb N$, let $n_k:=2^{k^2}$ and $s_k=t_k=C^{n_k}$. Then $E(\{n_k\},\{s_k\}, \{t_k\})$ is a subset of $\widehat{M}(\phi)$ and has Hausdorff dimension $\theta(\log C)$. Hence $\dim_{\rm H}\widehat{M}(\phi) \geq \theta(\log C)$. For the upper bound, note that $\widehat{M}(\phi)$ is contained in $M(\psi)$ with $\psi(n)=C^n$, by Theorem \ref{Mpsi}, we deduce that $\dim_{\rm H}\widehat{M}(\phi) \leq \theta(\log C)$.
\end{remark}

As a consequence of Theorems \ref{Mpsi} and \ref{AMhat}, we obtain the multifractal analyses of $\log M_n(x)$.

\begin{corollary}
For any $\tau\in (0,\infty)$,
 \[
\dim_{\rm H}\left\{x\in \mathbb{I}: \limsup_{n \to \infty} \frac{\log M_n(x)}{n} =\tau \right\}=\theta(\tau)
\]
and
\[
\dim_{\rm H}\left\{x\in \mathbb{I}: \liminf_{n \to \infty} \frac{\log M_n(x)}{n} =\tau \right\}=\widehat{\theta}(\tau).
\]
\end{corollary}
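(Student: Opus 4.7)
The plan is to sandwich each of the two level sets between sets of the form $M(\psi)$ or $\widehat{M}(\psi)$ whose Hausdorff dimensions are already computed in Theorems~\ref{Mpsi} and~\ref{AMhat}, and then to realize the matching lower bounds via the concrete constructions sketched after Theorems~\ref{ABN} and~\ref{BhatMN}. Continuity of the solution maps $\theta(\cdot)$ and $\widehat{\theta}(\cdot)$, inherited from the real-analyticity of $\mathrm{P}$ on $(1/2,\infty)$, closes the $\varepsilon$-gap between the two sides.

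Write $L_\tau:=\{x\in\mathbb I:\limsup_n \log M_n(x)/n=\tau\}$. For any $\varepsilon\in(0,\tau)$ put $\psi_\varepsilon(n):=e^{(\tau-\varepsilon)n}$. Since $\limsup_n \log M_n(x)/n=\tau$ forces $M_n(x)>\psi_\varepsilon(n)$ infinitely often, one has $L_\tau\subseteq M(\psi_\varepsilon)$; as $B_{\psi_\varepsilon}=e^{\tau-\varepsilon}\in(1,\infty)$, Theorem~\ref{Mpsi} gives $\dim_{\rm H} L_\tau\leq \theta(\tau-\varepsilon)$, and passing to the limit $\varepsilon\to 0^{+}$ yields $\dim_{\rm H} L_\tau\leq \theta(\tau)$. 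For the lower bound I would take $n_k:=2^{k^2}$ and $s_k=t_k:=e^{\tau n_k}$: for $x\in E(\{n_k\},\{s_k\},\{t_k\})$ and $k$ large, the spike $a_{n_k}(x)\in(e^{\tau n_k},2e^{\tau n_k}]$ dominates the bounded coefficients at other positions, so $M_n(x)=a_{n_k}(x)$ for $n_k\leq n<n_{k+1}$, from which $\limsup_n \log M_n(x)/n=\tau$. The super-geometric sparsity of $\{n_k\}$ gives $\alpha=\beta=\tau$, and Theorem~\ref{Alpha}(i) identifies the dimension of this subset as the unique solution of $\mathrm{P}(\theta)=\tau\theta$, that is, $\theta(\tau)$.

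For $L'_\tau:=\{x\in\mathbb I:\liminf_n \log M_n(x)/n=\tau\}$, the same $\psi_\varepsilon$ gives $L'_\tau\subseteq\widehat{M}(\psi_\varepsilon)$ because $\liminf =\tau$ forces $M_n(x)\geq \psi_\varepsilon(n)$ eventually. Since $\log\psi_\varepsilon(n)/n\equiv\tau-\varepsilon$, the quantity $C_{\psi_\varepsilon}$ is a genuine limit equal to $e^{\tau-\varepsilon}$, so Theorem~\ref{AMhat}(ii) applies and yields $\dim_{\rm H} L'_\tau\leq \widehat{\theta}(\tau-\varepsilon)$, whence $\dim_{\rm H} L'_\tau\leq \widehat{\theta}(\tau)$ by continuity. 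For the lower bound I would fix $\gamma>0$ and take $n_k:=\lfloor e^{\gamma k}\rfloor$, $s_k=t_k:=\exp(\tau e^{\gamma(k+1)})$; with the large coefficients at positions $n_k$ taking over, $\log M_n(x)/n$ is decreasing in $n$ on each block $[n_k,n_{k+1})$ and attains its minimum near $n=n_{k+1}-1$ at a value tending to $\tau$, so the constructed set sits inside $L'_\tau$. One computes $\alpha=\tau e^{2\gamma}/(e^\gamma-1)$ and $\beta=\tau e^\gamma$, so Theorem~\ref{Alpha}(i) gives its dimension as the solution of the equation~\eqref{TIII} with $\log C=\tau$. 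Minimizing the right-hand side over $\gamma>0$ exactly as in the discussion preceding Theorem~\ref{BhatMN} yields $\mathrm{P}(\theta)=(\sqrt{\theta}+\sqrt{2\theta-1})^{2}\tau$, whose solution is $\widehat{\theta}(\tau)$.

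The main subtlety is verifying that the explicit subsets realize the \emph{precise} extremal value $\tau$ rather than merely a one-sided bound: the sparsity hypothesis (H1) on $\{n_k\}$ together with $s_k\to\infty$ guarantees that bounded coefficients never overtake the spikes, and the specific growth rate of $\{s_k\}$ pins the limsup (respectively liminf) to $\tau$ along the subsequence $n=n_k$ (respectively $n=n_{k+1}-1$). Once this is checked, both equalities follow mechanically from Theorem~\ref{Alpha}(i) and the continuity of the two solution maps.
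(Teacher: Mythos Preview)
Your proposal is correct and is precisely the argument the paper leaves implicit when it declares the corollary ``a consequence of Theorems~\ref{Mpsi} and~\ref{AMhat}'': the upper bounds come from the inclusions $L_\tau\subseteq M(\psi_\varepsilon)$ and $L'_\tau\subseteq\widehat{M}(\psi_\varepsilon)$ together with continuity of $\theta(\cdot)$ and $\widehat{\theta}(\cdot)$, while the lower bounds are realized by the very same $E(\{n_k\},\{s_k\},\{t_k\})$ constructions used to prove Theorems~\ref{ABN} and~\ref{BhatMN}. The only cosmetic point is that in the $\liminf$ lower bound it is cleaner to fix from the outset the single $\gamma$ with $e^{\gamma}=1+\sqrt{(2\widehat{\theta}(\tau)-1)/\widehat{\theta}(\tau)}$ (as in the proof of Theorem~\ref{BhatMN}) rather than phrasing it as an optimization, but the content is identical.
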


\subsection{Structure of the paper}

The rest of the paper is organised as follows. In Section 2, we introduce some definitions and basic properties of continued fractions, and Hausdorff measure and Hausdorff dimension. Section 3 is devoted to giving the proof of the first statement of Theorem \ref{Alpha}, which is divided into two parts: the upper bound and the lower bound. For the upper bound, we define a set $X(b,c)$ in \eqref{equ:XBC} and estimate its exact Hausdorff dimension in terms of the unique solutions of the Diophantine pressure function. For the lower bound, we introduce the Diophantine pressure function in our proof via Lemma \ref{NP}, and then construct the subset of $E(\{n_k\},\{s_k\}, \{t_k\})$ to adapt Falconer's formula. The proofs of the second and third statements of Theorem \ref{Alpha} are provided in Sections 4 and 5. In Section 6, we give the proofs of three applications derived from Theorem \ref{Alpha}.

\section{Preliminaries}\label{Pre}

Throughout the paper, we write $\mathbb R$, $\mathbb R^+$ and $\mathbb Q^+$ to denote the sets of real numbers, positive real numbers and positive rational numbers respectively. For any $y \in \mathbb R$, let $\lfloor y\rfloor$ be the greatest integer less than or equal to $y$, and let $\lceil y\rceil$ be the smallest integer greater than or equal to $y$. We follow the convention $\mathbb N:=\{1,2,3,\dots\}$, $\mathbb N_{\leq N}:=\{1,2,\dots,N,\}$ and $\mathbb N_{\geq N}:=\{N,N+1,\dots\}$ for any $N\in \mathbb N$. For any $n\in \mathbb N$, let $\mathbb N^n$ be the set of all $n$-tuples $(\sigma_1,\sigma_2,\dots,\sigma_n)$, where $\sigma_i \in \mathbb N$ for all $1\leq i\leq n$. We use $|I|$ to denote the diameter of a set $I \subseteq \mathbb R$, and use $\sharp \mathcal{N}$ to mean the cardinality of a finite set $\mathcal{N}$.

Let us first introduce the continued fraction expansions of real numbers, and collect several basic properties of continued fractions, and We give the definitions of Hausdorff measure and Hausdorff dimension, and some techniques of calculating Hausdorff dimensions in this section.

\subsection{Continued fractions}
Let $T: [0,1) \to [0,1)$ be the \emph{Gauss map} defined as $T(0)\vcentcolon=0$ and
\begin{equation*}
T(x)\vcentcolon= \frac{1}{x}-  \left\lfloor\frac{1}{x}\right\rfloor, \ \ \  \forall\ x\in (0,1).
\end{equation*}
Denote by $T^n$ the $n$th iteration of the map $T$. For any $x\in (0,1)$, let $a_1(x)\vcentcolon=\lfloor1/x\rfloor$ and $a_{n+1}(x)\vcentcolon=a_1(T^{n}(x))$ for $n\geq 1$.
Then $x$ is written as the continued fraction expansion of the form
\begin{equation}\label{CF}
x =  \dfrac{1}{a_1(x) +\dfrac{1}{a_2(x) + \dfrac{1}{a_3(x)+ \ddots}}}=\vcentcolon [a_1(x),a_2(x),a_3(x),\ldots],
\end{equation}
where $a_1(x),a_2(x),a_3(x),\dots$ are positive integers, called the \emph{coefficients} (or \emph{partial quotients}) of the continued fraction expansion of $x$.

It is known in \cite[Theorem 14]{Khi} that $x \in (0,1)$ is a rational number if and only if its continued fraction expansion is finite, that is, $\exists\, n \in \mathbb{N}$ such that $T^n(x)=0$. Hence an irrational number admits a unique infinite sequence of its coefficients. Therefore, it is convenient no longer to distinguish between an irrational number and an infinite sequence of positive integers.

For any $x=[a_1,a_2,\dots,a_n,\dots]\in \mathbb{I}$ and $n \in \mathbb N$, the \emph{continuant} $q_n(a_1, a_2, \dots, a_n)$ or $q_n(x)$ is defined to be the denominator of the rational number $[a_1,a_2,\ldots,a_n]$. The continuants are also sometimes called \emph{Euler polynomials}, see \cite[page 89]{CF} for more information of Euler polynomials. With the conventions $q_{-1}\equiv0$ and $q_1\equiv1$, $q_n$ has the recursive formula: for each $n \in \mathbb N$,
\begin{equation}\label{pq}
q_n(a_1,a_2,\dots,a_n)=a_nq_{n-1}(a_1,a_2,\dots,a_{n-1})+q_{n-2}(a_1,a_2,\dots,a_{n-2}).
\end{equation}
 As a consequence, we have
\begin{equation}\label{qnleq}
a_1a_2 \cdots a_n  \leq q_n(a_1,a_2,\dots,a_n)< 2^n a_1 a_2\cdots a_n.
\end{equation}
Actually, \eqref{pq} can be extended to the more general formula
\begin{align}\label{mgf}
q_{n+k}(a_1,\dots,a_n,b_1,\dots,b_k)=q_{n}&(a_1,\dots,a_n)q_k(b_1,\dots,b_k) \nonumber\\
&+q_{n-1}(a_1,\dots,a_{n-1})q_{k-1}(b_2,\dots,b_k)
\end{align}
for every $(a_1,\dots,a_n) \in \mathbb N^n$ and every $(b_1,\dots,b_k) \in \mathbb N^k$.

\begin{lemma}\label{qnsep}
For any $k,n \in\mathbb N$ with $1\leq k \leq n$ and $a_1,\dots,a_n \in \mathbb{N}$,
\[
q_n(a_1,\dots,a_k,\dots,a_n)> \frac{a_k}{2}q_{n-1}(a_1,\dots,a_{k-1},a_{k+1},\dots,a_n).
\]
\end{lemma}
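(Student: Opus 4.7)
The plan is to exploit the general continuant formula \eqref{mgf} to peel off the factor $a_k$ from $q_n(a_1,\dots,a_n)$. For interior positions $1<k<n$, I would split $q_n$ at position $k$ using \eqref{mgf}, then apply the one-step recursion \eqref{pq} to expand the prefix $q_k(a_1,\dots,a_k)=a_k q_{k-1}(a_1,\dots,a_{k-1})+q_{k-2}(a_1,\dots,a_{k-2})$. Setting
\[
A:=q_{k-1}(a_1,\dots,a_{k-1}),\ B:=q_{k-2}(a_1,\dots,a_{k-2}),\ C:=q_{n-k}(a_{k+1},\dots,a_n),\ D:=q_{n-k-1}(a_{k+2},\dots,a_n),
\]
this expansion yields the clean identity $q_n(a_1,\dots,a_n)=a_k AC+BC+AD$.

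Next I would apply \eqref{mgf} once more, this time to $q_{n-1}(a_1,\dots,a_{k-1},a_{k+1},\dots,a_n)$ with the splitting placed at position $k-1$ of the reduced sequence, obtaining the companion identity $q_{n-1}(a_1,\dots,a_{k-1},a_{k+1},\dots,a_n)=AC+BD$. Combining these two expressions, the desired inequality is equivalent to $a_k(AC-BD)+2(BC+AD)>0$. I would then check that $A\geq B$ and $C\geq D$ by applying \eqref{pq} (and its mirror-symmetric version) together with the hypothesis $a_i\geq 1$; this forces $AC\geq BD$, so the first term is non-negative, while the second term $2(BC+AD)$ is strictly positive because $A,C\geq 1$. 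This closes the interior case.

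For the boundary positions $k=1$ and $k=n$, I would argue directly from the one-step recursion \eqref{pq}. When $k=n$, one immediately has $q_n=a_n q_{n-1}(a_1,\dots,a_{n-1})+q_{n-2}(a_1,\dots,a_{n-2})>a_n q_{n-1}(a_1,\dots,a_{n-1})\geq \tfrac{a_n}{2}q_{n-1}(a_1,\dots,a_{n-1})$. The symmetric case $k=1$ then follows by invoking the mirror symmetry of continuants $q_m(b_1,\dots,b_m)=q_m(b_m,\dots,b_1)$, reducing it to the $k=n$ case for the reversed tuple.

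I do not expect any genuine obstacle: the entire argument is a careful but routine algebraic manipulation. The only minor subtlety is that, for $k\in\{1,n\}$, some of the auxiliary continuants $A,B,C,D$ collapse to $q_0$ or $q_{-1}$ and the identity $q_n=a_k AC+BC+AD$ must be read with the conventions $q_0=1$, $q_{-1}=0$; to avoid book-keeping confusion, I would simply separate out the two boundary positions and handle them by the direct recursion argument above.
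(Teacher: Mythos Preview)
Your proposal is correct and is essentially a fleshed-out version of the paper's one-line proof, which simply states ``The proof is a consequence of \eqref{mgf}.'' Your two applications of \eqref{mgf}---once to $q_n$ and once to the reduced continuant $q_{n-1}$---together with the one-step recursion \eqref{pq} are exactly the natural way to make that sentence precise, and your treatment of the boundary cases $k\in\{1,n\}$ via \eqref{pq} and mirror symmetry is the standard complement.
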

\begin{proof}
The proof is a consequence of \eqref{mgf}.
\end{proof}

For any $n \in \mathbb{N}$ and $(a_1,\dots,a_n) \in \mathbb{N}^n$, we call
\begin{equation*}\label{equ:cylinder}
I_n(a_1,\ldots,a_n)\vcentcolon=\left\{x\in (0,1):a_1(x)=a_1,\ldots, a_n(x)=a_n\right\}
\end{equation*}
a \emph{cylinder of order $n$} associated with $(a_1, \ldots, a_n)$. Then $I_n(a_1,\ldots,a_n)$ is an interval of the length
\[
|I_n(a_1,\dots,a_n)| = \frac{1}{q_n(q_n+q_{n-1})},
\]
where $q_n$ satisfies the recursive formula in \eqref{pq}. Hence
\begin{equation}\label{cylinder}
\frac{1}{2^{2n+1}(a_1\cdots a_n)^2}< \frac{1}{2q^2_n} <|I_n(a_1, \ldots, a_n)| < \frac{1}{q^2_n} \leq \frac{1}{(a_1\cdots a_n)^2}.
\end{equation}

The following result can be viewed as the bounded distortion property of continued fractions.

\begin{lemma}[{\cite[Lemma A.2]{Moreira}}]\label{q}
For any $(a_1,\dots,a_n) \in \mathbb N^n$ and $(b_1,\dots,b_k) \in \mathbb N^k$,
\[
\frac{1}{2} \leq \frac{|I_{n+k}(a_1,\dots,a_n,b_{1},\dots,b_{k})|}{|I_{n}(a_1,\cdots,a_n)|\cdot|I_{k}(b_{1},\dots,b_{k})|} \leq 2.
\]
\end{lemma}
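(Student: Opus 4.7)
The plan is a direct calculation using the exact length formula $|I_n(a_1,\dots,a_n)| = 1/[q_n(q_n+q_{n-1})]$ already recorded in \eqref{cylinder} together with the concatenation identity \eqref{mgf} for continuants.

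First I would fix compact notation: write $q_n := q_n(a_1,\dots,a_n)$, $q_{n-1} := q_{n-1}(a_1,\dots,a_{n-1})$, $p_k := q_k(b_1,\dots,b_k)$, $p_{k-1} := q_{k-1}(b_1,\dots,b_{k-1})$, and the shifted-tail continuants $\tilde p_{k-1} := q_{k-1}(b_2,\dots,b_k)$, $\tilde p_{k-2} := q_{k-2}(b_2,\dots,b_{k-1})$. Applying \eqref{mgf} twice yields
\[
q_{n+k}(a_1,\dots,a_n,b_1,\dots,b_k) = q_n p_k + q_{n-1}\tilde p_{k-1},
\]
\[
q_{n+k-1}(a_1,\dots,a_n,b_1,\dots,b_{k-1}) = q_n p_{k-1} + q_{n-1}\tilde p_{k-2}.
\]

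Second, I would substitute these into
\[
\frac{|I_{n+k}|}{|I_n|\,|I_k|} = \frac{q_n(q_n+q_{n-1})\,p_k(p_k+p_{k-1})}{q_{n+k}\,(q_{n+k}+q_{n+k-1})},
\]
divide numerator and denominator by $q_n^2\,p_k(p_k+p_{k-1})$, and introduce the dimensionless ratios
\[
\alpha := q_{n-1}/q_n,\qquad \mu := \tilde p_{k-1}/p_k,\qquad \nu := (\tilde p_{k-1}+\tilde p_{k-2})/(p_k+p_{k-1}).
\]
A short bookkeeping calculation then collapses the ratio to
\[
\frac{|I_{n+k}|}{|I_n|\,|I_k|} \;=\; \frac{1+\alpha}{(1+\alpha\mu)(1+\alpha\nu)}.
\]

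Third, I would check that $\alpha,\mu,\nu\in[0,1]$. The bound $\alpha\le 1$ is just the monotonicity $q_{n-1}\le q_n$. For $\mu$ and $\nu$ I would use the mirror form of the continuant recursion
\[
q_k(b_1,\dots,b_k) \;=\; b_1\,q_{k-1}(b_2,\dots,b_k) + q_{k-2}(b_3,\dots,b_k),
\]
which is a consequence of the palindrome symmetry $q_k(b_1,\dots,b_k)=q_k(b_k,\dots,b_1)$. This immediately gives $p_k \ge b_1\tilde p_{k-1}\ge \tilde p_{k-1}$, hence $\mu\le 1$. Applying the same identity to the auxiliary sequence $(b_1,\dots,b_{k-1},b_k+1)$, together with the observations $p_k+p_{k-1}=q_k(b_1,\dots,b_{k-1},b_k+1)$ and $\tilde p_{k-1}+\tilde p_{k-2}=q_{k-1}(b_2,\dots,b_{k-1},b_k+1)$, yields the analogous bound $\nu\le 1$. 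Once $\alpha,\mu,\nu\in[0,1]$ is in hand, the upper bound $(1+\alpha)/[(1+\alpha\mu)(1+\alpha\nu)]\le 1+\alpha \le 2$ follows from $(1+\alpha\mu)(1+\alpha\nu)\ge 1$, and the lower bound $1/(1+\alpha)\ge 1/2$ follows from $(1+\alpha\mu)(1+\alpha\nu)\le (1+\alpha)^2$.

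The computation is entirely elementary; the only substantive input beyond \eqref{mgf} is the palindrome symmetry of continuants, and there is no real obstacle --- just the need to track subscripts carefully.
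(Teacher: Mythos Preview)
Your argument is correct and is precisely the computation the paper has in mind: the paper's own proof is the single sentence ``The proof is a direct consequence of \eqref{mgf},'' and you have carried out exactly that direct consequence, reducing the ratio to $(1+\alpha)/[(1+\alpha\mu)(1+\alpha\nu)]$ with $\alpha,\mu,\nu\in[0,1]$ via the concatenation identity and the palindrome symmetry of continuants. There is nothing to add.
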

As a consequence, for any $(a_1,\dots,a_n,\dots,a_{n+k}) \in \mathbb N^{n+k}$,
\begin{equation}\label{cysep}
\frac{1}{8} \leq \frac{|I_{n+k}(a_1,\dots,a_n,\dots,a_{n+k})|}{|I_{1}(a_n)|\cdot|I_{n+k-1}(a_1,\dots,a_{n-1},a_{n+1},\dots,a_{n+k})|} \leq 8.
\end{equation}

\begin{proof}
The proof is a direct consequence of \eqref{mgf}.
\end{proof}

\subsection{Hausdorff measure and Hausdorff dimension}

Let $E$ be a subset of $\mathbb R$, and let $s\geq 0$ be a number. For any $\delta>0$, define
\[
\mathcal{H}^s_\delta(E):= \inf\left\{\sum^\infty_{n = 1}|U_n|^s: E \subset \bigcup^\infty_{n =1} U_n \ \text{and}\ |U_n| \leq \delta,\ \forall n \in \mathbb N\right\}.
\]
When $\delta$ decreases, the infimum $\mathcal{H}^s_\delta(E)$ increases, and so it approaches a limit as $\delta$ goes to zero. Write
\[
\mathcal{H}^s (E):=\lim_{\delta \to 0} \mathcal{H}^s_\delta(E).
\]
We call $\mathcal{H}^s (E)$ the \emph{$s$-dimensional Hausdorff measure} of $E$. The \emph{Hausdorff dimension} $\dim_{\rm H}E$ of $E$ is defined by
\[
\dim_{\rm H}E:=\inf\left\{s\geq 0: \mathcal{H}^s (E) =0\right\}=\sup\left\{s \geq 0: \mathcal{H}^s (E) =\infty\right\}.
\]
By definition, the Hausdorff dimension of $E$ is a critical value such that $\mathcal{H}^s (E)=0$ for $s>\dim_{\rm H}E$ and $\mathcal{H}^s (E)=\infty$ for $0\leq s<\dim_{\rm H}E$. However, for $s=\dim_{\rm H}E$, $\mathcal{H}^s (E)$ may be zero or infinite, or may satisfy $0<\mathcal{H}^s (E)<\infty$.

Hausdorff dimension has the following properties: (i) Monotonicity: if $E\subseteq F$, then $\dim_{\rm H}E\leq \dim_{\rm H}F$; (ii) Countable stability: if $\{E_n\}$ be a sequence of subsets of $\mathbb R$, then $\dim_{\rm H} (\cup_{n \geq 1}E_n) = \sup_{n \geq 1}\{\dim_{\rm H}E_n\}$; (iii) Each countable set is of Hausdorff dimension zero.

It is worth pointing out that calculations of Hausdorff dimension usually involve an upper estimate and a lower estimate, which are hopefully equal. As a general rule, we obtain upper bounds for Hausdorff dimension by finding natural coverings, and lower bounds by putting mass distributions on the fractal set. We refer the reader to \cite[Section 4]{Fal90} for techniques of calculating Hausdorff dimensions.

The following result is often used to estimate the upper bound of Hausdorff dimension.

\begin{lemma}[{\cite[Proposition 4.1]{Fal90}}]\label{upp}
For each $n \in \mathbb N$, let $\mathcal{E}_n$ be a finite collection of sets of diameter at most $\delta_n$ that cover $E$. If $\delta_n \to 0$ as $n \to \infty$, then
\[
\dim_{\rm H}E \leq \liminf_{n \to \infty} \frac{\log \sharp\mathcal{E}_n}{-\log \delta_n}.
\]
\end{lemma}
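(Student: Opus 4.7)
The plan is to argue directly from the definitions of $\mathcal{H}^s_\delta$, $\mathcal{H}^s$, and $\dim_{\rm H}$. The key observation is that the collections $\mathcal{E}_n$ are admissible $\delta_n$-covers of $E$, so each one gives an upper bound on $\mathcal{H}^s_{\delta_n}(E)$, and we just need to select the right exponent $s$.

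First I would fix an arbitrary $s$ strictly greater than $L:=\liminf_{n \to \infty} \frac{\log \sharp\mathcal{E}_n}{-\log \delta_n}$. By definition of $\liminf$, there exists a subsequence $\{n_k\}$ (and one may also harmlessly assume $\delta_{n_k} < 1$, since $\delta_n \to 0$) along which
\[
\frac{\log \sharp\mathcal{E}_{n_k}}{-\log \delta_{n_k}} < s \quad\text{for all large } k,
\]
which rearranges to $\sharp \mathcal{E}_{n_k} \leq \delta_{n_k}^{-s}$.

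Next I would use $\mathcal{E}_{n_k}$ as an admissible cover of $E$ by sets of diameter at most $\delta_{n_k}$ to estimate
\[
\mathcal{H}^s_{\delta_{n_k}}(E) \;\leq\; \sum_{U \in \mathcal{E}_{n_k}} |U|^s \;\leq\; \sharp\mathcal{E}_{n_k}\cdot \delta_{n_k}^{s} \;\leq\; 1.
\]
Since $\delta_{n_k} \to 0$ and $\mathcal{H}^s_{\delta}(E)$ is monotone non-increasing in $\delta$, letting $k \to \infty$ yields $\mathcal{H}^s(E) \leq 1 < \infty$. By the definition of Hausdorff dimension, this forces $\dim_{\rm H} E \leq s$. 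Because $s$ was an arbitrary real number greater than $L$, we conclude $\dim_{\rm H} E \leq L$, as required.

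There is no serious obstacle here: the argument is essentially a one-line comparison $\sum |U|^s \leq \sharp\mathcal{E}_{n_k}\cdot \delta_{n_k}^{s}$ together with the definition of $\liminf$. The only point one should be careful about is to pass to a subsequence (so that $\sharp\mathcal{E}_{n_k}$ is actually controlled by $\delta_{n_k}^{-s}$) rather than trying to work with all $n$, since the $\liminf$ need not equal the $\limsup$.
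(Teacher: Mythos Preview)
Your proof is correct and is essentially the standard argument; the paper does not supply its own proof of this lemma but simply quotes it from Falconer \cite[Proposition 4.1]{Fal90}, so there is nothing to compare against beyond noting that your write-up matches the classical proof.
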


The following lemma is called the \emph{mass distribution principle}, which provides an effective way to get lower bounds for Hausdorff dimension.

\begin{lemma}[{\cite[Mass Distribution Principle]{Fal90}}]\label{MDP}
Let $\mu$ be a mass distribution on $E$ and suppose that for some $s>0$ there are numbers $c>0$ and $\varepsilon >0$ such that
\[
\mu(U) \leq c|U|^s
\]
for all sets $U$ with $|U| \leq \varepsilon$. Then $\dim_{\rm H}E \geq s$.
\end{lemma}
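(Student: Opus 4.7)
The plan is to prove the bound $\dim_{\rm H} E \geq s$ directly from the definition of Hausdorff measure, by showing that $\mathcal{H}^s(E) > 0$. The key idea is to use the given mass distribution $\mu$ on $E$ (so $\mu(E) > 0$ is finite and positive) to produce a lower bound on the $s$-dimensional covering sums, leveraging countable subadditivity of $\mu$.

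First I would fix any $\delta$ with $0 < \delta \leq \varepsilon$ and take an arbitrary countable cover $\{U_n\}_{n \geq 1}$ of $E$ with $|U_n| \leq \delta$ for every $n$. Since $|U_n| \leq \delta \leq \varepsilon$, the standing hypothesis gives $\mu(U_n) \leq c |U_n|^s$ for each $n$. Applying countable subadditivity of $\mu$ together with $E \subseteq \bigcup_n U_n$ yields
\[
0 < \mu(E) \leq \sum_{n=1}^{\infty} \mu(U_n) \leq c \sum_{n=1}^{\infty} |U_n|^s.
\]
Taking the infimum over all admissible $\delta$-covers of $E$ therefore gives $\mathcal{H}^s_\delta(E) \geq \mu(E)/c$.

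Next I would let $\delta \to 0$. Since the above lower bound is independent of $\delta$, passing to the limit yields
\[
\mathcal{H}^s(E) = \lim_{\delta \to 0} \mathcal{H}^s_\delta(E) \geq \frac{\mu(E)}{c} > 0.
\]
By the characterization of Hausdorff dimension as the critical exponent at which $\mathcal{H}^s$ jumps from $\infty$ to $0$, strict positivity of $\mathcal{H}^s(E)$ forces $\dim_{\rm H} E \geq s$, which is the desired conclusion.

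There is essentially no main obstacle in this argument; it is a short direct computation from the definitions. The only mild subtlety worth flagging is that the hypothesis must be applied to arbitrary sets $U$ of small diameter, not merely to intervals or balls, so one should verify the statement is phrased to allow an arbitrary $\delta$-cover (as indeed it is). A comparable version for covers by balls requires an extra factor from Vitali-type arguments, but in the formulation given here no such strengthening is needed.
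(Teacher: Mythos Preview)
Your proof is correct and is precisely the standard argument for the Mass Distribution Principle. The paper does not supply its own proof of this lemma; it merely cites the result from Falconer \cite{Fal90}, so there is nothing further to compare.
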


The mass distribution principle can give the lower bounds for the Hausdorff dimension of Cantor-type sets.

Let $\mathrm{E}_0 \supset \mathrm{E}_1  \supset \mathrm{E}_2 \supset\cdots$ be a decreasing sequence of sets, with each $\mathrm{E}_k$ a union of a finite number of disjoint closed intervals (called \emph{$k$th level basic intervals}), with each interval of $\mathrm{E}_k$ containing at least two intervals of $\mathrm{E}_{k+1}$, and the maximum length of $k$th level intervals tending to $0$ as $k$ goes to infinity. Then
\begin{equation}\label{Falexample}
\mathrm{E} := \bigcap^\infty_{k = 0} \mathrm{E}_k.
\end{equation}
is a totally disconnected subset of $[0, 1]$ which is generally a fractal (see Figure 4).
There are many formulas for the lower bound of the Hausdorff dimension of $\mathrm{E}$ in Section 4.1 of \cite[]{Fal90}. Here we list one of them.

\begin{figure}[H]
  \centering
  \includegraphics[width=11.6cm, height=4.9cm]{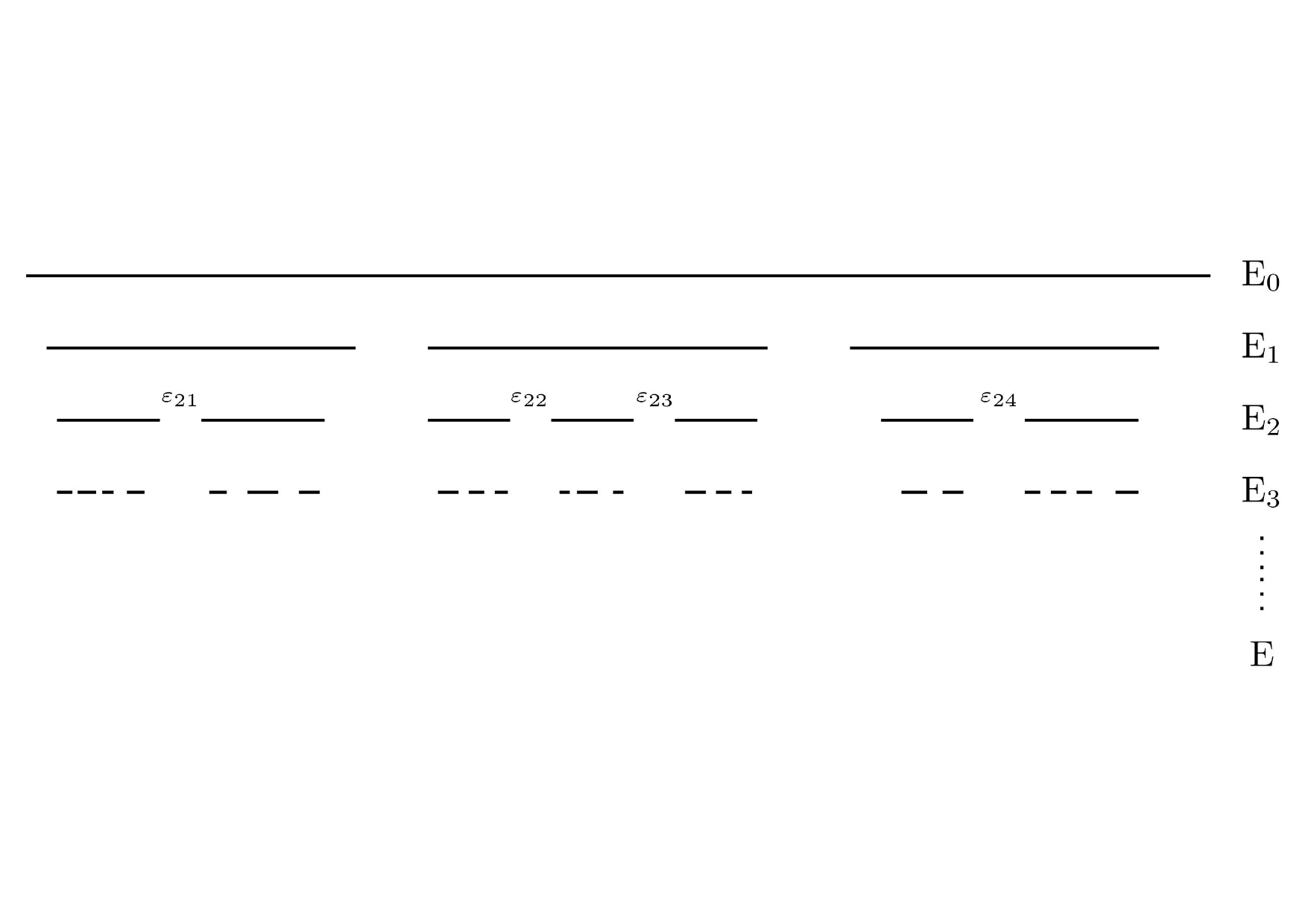}\\
 \caption{The structure of $\mathrm{E}_k$. By the definitions of $m_k$ and $\varepsilon_k$ in Lemma \ref{lower}, $m_1$ can take the value $3$, $m_2$ can take the value $2$, and $\varepsilon_2$ can take the value $\min\{\varepsilon_{21},\varepsilon_{22},\varepsilon_{23},\varepsilon_{24}\}$.}
\end{figure}

\begin{lemma}[{\cite[Example 4.6]{Fal90}}]\label{lower}
Suppose in the construction \eqref{Falexample}, each $(k-1)$th level basic interval contains at least $m_k$ $k$th level basic intervals $(k = 1, 2,\dots)$ which are separated by gaps of at least $\varepsilon_k$, where $m_k \geq 2$ and $0 < \varepsilon_{k+1} < \varepsilon_k$ for each $k$. Then
\[
\dim_\mathrm{H} \mathrm{E} \geq \liminf_{k \to \infty} \frac{\log(m_1m_2 \cdots m_{k-1})}{-\log (m_{k}\varepsilon_k)}.
\]
\end{lemma}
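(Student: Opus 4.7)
The plan is to put a natural Bernoulli-type mass distribution $\mu$ on $\mathrm{E}$ and then apply the mass distribution principle (Lemma~\ref{MDP}). I would assign to each $k$th level basic interval $I$ the mass $\mu(I) := (m_1 m_2 \cdots m_k)^{-1}$; since each $(k-1)$th level interval contains at least $m_k$ disjoint $k$th level intervals, the prescription is additively consistent, and Carath\'eodory's extension theorem produces a Borel probability measure supported on $\mathrm{E}=\bigcap_{k}\mathrm{E}_k$.

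Fix $s'$ strictly less than $\liminf_{k \to \infty} \log(m_1 \cdots m_{k-1})/(-\log(m_k\varepsilon_k))$, and assume without loss of generality that $s' \in (0,1)$. The defining inequality for $s'$ can be rewritten as $(m_k\varepsilon_k)^{s'}\,m_1\cdots m_{k-1}>1$ for every sufficiently large $k$. Given a set $U$ of sufficiently small diameter, I would choose the unique index $k$ with $\varepsilon_k \leq |U| < \varepsilon_{k-1}$; such $k$ exists and is large because the monotonicity hypothesis $\varepsilon_{k+1}<\varepsilon_k$ together with the obvious bound $\varepsilon_k \leq L/(m_1\cdots m_k)$ (where $L$ is the length of the ambient interval, and using $m_j\geq 2$) forces $\varepsilon_k \downarrow 0$. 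The $(k-1)$th level basic intervals are separated by gaps exceeding $|U|$, so $U$ meets at most one of them; inside that interval, the $k$th level sub-intervals are separated by gaps of at least $\varepsilon_k$, so $U$ meets at most $\min\{m_k,\,1+|U|/\varepsilon_k\}\leq \min\{m_k,\,2|U|/\varepsilon_k\}$ of the $k$th level intervals.

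Combining the two bounds via the elementary inequality $\min(A,B)\leq A^{1-s'}B^{s'}$, valid for $s'\in[0,1]$ and $A,B\geq 0$, would give
\begin{equation*}
\mu(U) \;\leq\; \frac{1}{m_1\cdots m_k}\min\!\left\{m_k,\,\frac{2|U|}{\varepsilon_k}\right\} \;\leq\; \frac{2^{s'}\,|U|^{s'}}{(m_k\varepsilon_k)^{s'}\,m_1\cdots m_{k-1}} \;\leq\; 2\,|U|^{s'},
\end{equation*}
where the last inequality uses the lower bound $(m_k\varepsilon_k)^{s'}\,m_1\cdots m_{k-1}>1$ coming from the choice of $s'$. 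Lemma~\ref{MDP} would then yield $\dim_{\rm H}\mathrm{E} \geq s'$, and letting $s'$ approach the liminf would finish the proof.

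The main obstacle I anticipate is purely book-keeping: the hypothesis $\varepsilon_{k+1}<\varepsilon_k$ is essential for the two gap estimates to be indexed consistently, and one must check that $\varepsilon_k \downarrow 0$ so that a suitable index $k$ is always available when $|U|$ is small. Everything else is a routine interpolation of two covering estimates and a clean application of the mass distribution principle.
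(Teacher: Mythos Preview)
Your argument is essentially the standard proof from Falconer's \emph{Fractal Geometry}, Example~4.6, which is precisely what the paper cites; the paper itself does not supply a proof of this lemma but simply invokes the reference. So your approach matches the intended one.

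Two small points of book-keeping are worth tightening. First, the phrase ``$U$ meets at most $\min\{m_k,\dots\}$ of the $k$th level intervals'' is only literally true after you have pruned the construction so that each $(k-1)$th level interval contains \emph{exactly} $m_k$ children (discarding any surplus); this is harmless for a lower bound on dimension, but should be said explicitly, since the hypothesis only gives \emph{at least} $m_k$ children. Alternatively, phrase both estimates directly in terms of $\mu$: one has $\mu(U)\le (m_1\cdots m_{k-1})^{-1}$ from the single $(k-1)$th level interval hit, and $\mu(U)\le (2|U|/\varepsilon_k)(m_1\cdots m_k)^{-1}$ from the gap count, and then interpolate. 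Second, your justification that $\varepsilon_k\downarrow 0$ via the bound $\varepsilon_k\le L/(m_1\cdots m_k)$ is not quite right as stated (the gaps at level $k$ are only bounded by the length of a $(k-1)$th level interval, not by $L$ divided by the product), but the conclusion is already guaranteed by the standing assumption in the construction~\eqref{Falexample} that the maximal length of $k$th level intervals tends to zero, since each gap at level $k$ sits inside a $(k-1)$th level interval.
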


The Hausdorff dimensions of many Cantor-type sets of continued fractions are well understood. Let $\{u_n\}$ and $\{v_n\}$ be sequences of positive numbers such that $v_n \geq 1$ for all $n \in \mathbb N$. Write
\begin{equation}\label{Euv}  F(\{u_n\}, \{v_n\}):= \big\{x\in \mathbb{I}: u_n < a_n(x) \leq u_n+v_n\ \text{for all $n\in \mathbb N$}\big\}.
\end{equation}
Liao and Rams \cite{LR22} obtained the Hausdorff dimension of $F(\{u_n\}, \{v_n\})$ by using mass distribution principle. However, an alternative proof
can be derived by using Lemmas \ref{upp} and \ref{lower}.

\begin{lemma}[{\cite[Lemma 2.3]{LR22}}]\label{LRlemma}
Assume that $u_n,v_n\to \infty$ as $n \to \infty$, and
\[
\limsup_{n \to \infty} \frac{v_n}{u_n} <\infty.
\]
Then
\[
\dim_{\rm H} F(\{u_n\}, \{v_n\}) = \liminf_{n \to \infty}\frac{\sum^n_{k=1}\log v_k}{2\sum^{n+1}_{k=1}\log u_k - \log v_{n+1}}.
\]
\end{lemma}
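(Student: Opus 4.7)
The plan is to establish the matching upper and lower bounds via the general covering and Cantor-construction techniques of Lemmas~\ref{upp} and~\ref{lower}, once the correct ``natural scale'' --- which is at level $n{+}\tfrac12$ rather than $n$ or $n{+}1$ --- is identified.

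For the upper bound, I would cover $F(\{u_n\},\{v_n\})$ by the super-cylinders
\[
J_n(a_1,\ldots,a_n):=\bigcup_{\lfloor u_{n+1}\rfloor<a_{n+1}\le \lfloor u_{n+1}+v_{n+1}\rfloor}I_{n+1}(a_1,\ldots,a_n,a_{n+1}),
\]
indexed over admissible $n$-tuples with $\lfloor u_k\rfloor<a_k\le \lfloor u_k+v_k\rfloor$. Their number is $\asymp \prod_{k=1}^n v_k$. Because $\{I_{n+1}(a_1,\ldots,a_n,j)\}_{j\ge 1}$ partitions $I_n(a_1,\ldots,a_n)$ monotonically in $j$, each $J_n$ is a subinterval of $I_n$; combining \eqref{cylinder} with $a_k\asymp u_k$ (which follows from $v_k/u_k = O(1)$) gives
\[
|J_n(a_1,\ldots,a_n)|\le \sum_{j=\lfloor u_{n+1}\rfloor+1}^{\lfloor u_{n+1}+v_{n+1}\rfloor}|I_{n+1}(a_1,\ldots,a_n,j)|\asymp \frac{v_{n+1}}{u_{n+1}^2\,\prod_{k=1}^n u_k^2}.
\]
These diameters tend to $0$ since $u_n\to\infty$, and Lemma~\ref{upp} then delivers exactly the right-hand side of the claimed formula as an upper bound, with the $O(n)$ losses from integer parts absorbed because $\log u_n,\log v_n\to\infty$.

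For the lower bound, I would construct a Cantor subset $\tilde F\subseteq F(\{u_n\},\{v_n\})$ by \emph{thinning} each generation: within each admissible $(n-1)$-cylinder retained in $\tilde F_{n-1}$, keep only the $n$-cylinders with
\[
a_n\in\{\lfloor u_n\rfloor+2j+1:0\le j<\lfloor v_n/2\rfloor\}.
\]
Each parent then contains $m_n\asymp v_n$ selected children, and any two adjacent ones are separated by the skipped $n$-cylinder between them, whose length is $\asymp 1/(u_n^2 q_{n-1}^2)\asymp 1/\prod_{k=1}^n u_k^2=:\varepsilon_n$; children of different parents are further apart by the larger inherited gaps. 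Since $u_n,v_n\to\infty$, the hypotheses $m_n\ge 2$ and $\varepsilon_{n+1}<\varepsilon_n$ of Lemma~\ref{lower} hold eventually, and Lemma~\ref{lower} then produces the matching lower bound.

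I expect the main obstacle to be identifying this shifted scale and the super-cylinder cover. A naive cover by either $n$- or $(n+1)$-cylinders would miss the crucial savings encoded by $-\log v_{n+1}$ in the denominator of the formula; that term comes precisely from the fact that the admissible $(n+1)$-cylinders inside a given $I_n$ form a contiguous subinterval of length $\asymp (v_{n+1}/u_{n+1}^2) |I_n|$, much shorter than $|I_n|$. The dual subtlety on the lower-bound side is that this very same contiguity \emph{prevents} Lemma~\ref{lower} from being applied directly, which is why the thinning step is needed to manufacture gaps; the boundedness of $v_n/u_n$ combined with $v_n\to\infty$ ensures that the factor-of-$2$ loss in the child count is asymptotically invisible.
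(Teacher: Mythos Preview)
Your proposal is correct and follows precisely the alternative route the paper suggests: the paper does not prove this lemma itself but cites the original Liao--Rams argument via the mass distribution principle, while remarking that a proof using Lemmas~\ref{upp} and~\ref{lower} is also possible, which is exactly what you carry out. Your identification of the super-cylinder cover $J_n$ (capturing the $-\log v_{n+1}$ savings) and the thinning step to manufacture gaps are the right ideas, and the $O(n)$ error terms are indeed absorbed because $\log u_k,\log v_k\to\infty$.
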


\section{Proof of the first statement of Theorem \ref{Alpha}}
In this section, we assume that $0<\alpha<\infty$. Let $\theta(\alpha,\beta)$ be the unique solution of the Diophantine pressure equation $\mathrm{P}(\theta)=(2\alpha -\beta)\theta - (\alpha -\beta)$. The aim is to prove
\[
\dim_{\rm H} E(\{n_k\},\{s_k\}, \{t_k\})=\dim_{\rm H} E_{L}(\{n_k\},\{s_k\}, \{t_k\})= \theta(\alpha,\beta).
\]
Note that $E(\{n_k\},\{s_k\}, \{t_k\}) \subseteq E_{L}(\{n_k\},\{s_k\}, \{t_k\})$, so the proof is divided into two parts:
the upper bound of $\dim_{\rm H} E_{L}(\{n_k\},\{s_k\}, \{t_k\})$ and the lower bound of $\dim_{\rm H} E(\{n_k\},\{s_k\}, \{t_k\})$.

\subsection{Upper bound}
For any $b,c \in [0,\infty)$, define
\begin{equation}\label{equ:XBC}
X(b,c):=\bigcap^\infty_{m=0} X_m(b,c),
\end{equation}
where $X_m(b,c)$ is given by
\begin{align*}
X_m(b,c):=\Big\{x\in \mathbb{I}:\ &\Pi^{(m)}_n(x) \geq e^{bn}\ \text{and}\ a_{n+1}(x) \geq e^{cn}\ \text{simultaneously}\\
& \text{for infinitely many $n \in \mathbb N$}\Big\}
\end{align*}
and
\[
\Pi^{(m)}_n(x):=\prod_{1\leq j\leq n, a_j(x)> e^m} a_j(x)
\]
denotes the product of the first $n$ coefficients of $x$ whose value is greater than $e^m$. For any $b=\infty$ or $c=\infty$, $X(b,c)$ is defined as
\[
X(\infty, c):= \bigcap^\infty_{M=1}X(M,c), \ \ \ \ \ X(b, \infty):= \bigcap^\infty_{N=1}X(b,N),
\]
and
\[
X(\infty, \infty):= \bigcap^\infty_{M=1}\bigcap^\infty_{N=1}X(M,N).
\]

\begin{theorem}\label{Xbc}
For any $b,c \in [0,\infty)$,
\begin{equation}\label{Xbcpressure}
\dim_{\rm H} X(b,c) = \Theta(b,c),
\end{equation}
where $\Theta(b,c)$ is the unique solution of the Diophantine pressure equation $\mathrm{P}(\theta)=b(2\theta-1)+c\theta$. Moreover, for any $b=\infty$ or $c=\infty$,
\begin{equation}\label{Xbconehalf}
\dim_{\rm H} X(b,c) = \frac{1}{2}.
\end{equation}
\end{theorem}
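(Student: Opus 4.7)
The plan is to establish matching upper and lower bounds for $\dim_{\rm H} X(b, c)$ separately in the finite and infinite regimes.

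For finite $b, c$, I would derive the upper bound via a natural covering at level $n$. Since $X_m(b, c)$ is a limsup set, for every $N \geq 1$,
$$X_m(b, c) \subseteq \bigcup_{n \geq N}\; \bigcup_{(a_1,\ldots,a_n) : \Pi_n^{(m)} \geq e^{bn}} J_n(a_1,\ldots,a_n),$$
where $J_n(a_1,\ldots,a_n) := \{x \in I_n(a_1,\ldots,a_n) : a_{n+1}(x) \geq e^{cn}\}$ is the subinterval of the cylinder $I_n(a_1,\ldots,a_n)$ corresponding to $T^n(x) \leq e^{-cn}$, and so has length comparable to $(q_n^2 e^{cn})^{-1}$ by bounded distortion of the Gauss map. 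Applying Lemma \ref{upp} reduces the upper bound to estimating
$$\sum_{n \geq N} e^{-cn\theta} \sum_{(a_1,\ldots,a_n): \Pi_n^{(m)} \geq e^{bn}} q_n(a_1,\ldots,a_n)^{-2\theta}.$$
The inner sum I would handle by a Chebyshev-type bound with parameter $\rho \in [0, 2\theta - 1)$:
$$\sum_{\Pi_n^{(m)} \geq e^{bn}} q_n^{-2\theta} \;\leq\; e^{-\rho b n} \sum_{(a_1,\ldots,a_n) \in \mathbb N^n} (\Pi_n^{(m)})^\rho q_n^{-2\theta} \;=\; \exp\!\bigl(n(\mathrm{P}_m(\theta,\rho) - \rho b) + o(n)\bigr),$$
where $\mathrm{P}_m(\theta,\rho)$ is the modified Diophantine pressure for the Gauss transfer operator $\mathcal{L}_\theta$ perturbed by the weight $a^\rho \mathbf{1}_{a > e^m}$. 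A standard transfer-operator perturbation gives $\mathrm{P}_m(\theta,\rho) \to \mathrm{P}(\theta)$ as $m \to \infty$, for each fixed $\rho < 2\theta - 1$. The covering sum is then finite whenever $\mathrm{P}(\theta) < \rho b + c\theta$; letting $\rho \nearrow 2\theta - 1$ this holds for every $\theta > \Theta(b, c)$, yielding $\dim_{\rm H} X(b,c) \leq \dim_{\rm H} X_m(b,c) \leq \Theta(b,c)$.

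For the matching lower bound I would realise a suitable set $E(\{n_k\},\{s_k\},\{t_k\})$ inside $X(b,c)$ and invoke Theorem \ref{Alpha}(i). Given $\delta > 0$, choose a geometric $\{n_k\}$ with ratio $1 + c/(b+\delta)$, so that $(n_1 + \cdots + n_{k-1})/n_k \to (b+\delta)/c$, and put $s_k = t_k = \lceil e^{c n_k}\rceil$. Then (H1)--(H3) hold with $\beta = c$ and $\alpha = b + c + \delta$. For each $x \in E$ the tails $\{a_j(x)\}_{j\neq n_k}$ are bounded by some $N_x < \infty$, and for any $m$ with $e^m \geq N_x$ and $k$ large,
$$\Pi_{n_k - 1}^{(m)}(x) \;\geq\; \prod_{i = i_0(m)}^{k-1} s_i \;=\; \exp\!\Bigl(c \sum_{i = i_0(m)}^{k-1} n_i\Bigr) \;\geq\; e^{b(n_k - 1)},\qquad a_{n_k}(x) \geq s_k \geq e^{c(n_k-1)},$$
so $E \subseteq X_m(b,c)$ for every $m$, i.e.\ $E \subseteq X(b,c)$. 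Theorem \ref{Alpha}(i) gives $\dim_{\rm H} E = \theta(b + c + \delta, c)$, and letting $\delta \downarrow 0$ with the continuity of the pressure equation delivers $\dim_{\rm H} X(b,c) \geq \Theta(b,c)$.

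For the infinite case $b = \infty$ or $c = \infty$, the upper bound $\dim_{\rm H} X \leq 1/2$ follows from monotonicity: $X(\infty, c) \subseteq X(M, c)$ for every $M$, and the unique solutions of $\mathrm{P}(\theta) = M(2\theta - 1) + c\theta$ satisfy $\Theta(M, c) \to 1/2^+$ as $M \to \infty$ because $\mathrm{P}(\theta) \to +\infty$ at $\theta = 1/2^+$; analogous arguments cover $X(b,\infty)$ and $X(\infty,\infty)$. For the matching lower bound I would use a slowly-increasing sparse sequence such as $n_k = k^2$ with $s_k = t_k = \lceil e^{c n_k}\rceil$ (or, when $c = \infty$, $s_k = t_k = \lceil \exp(e^{k^2})\rceil$); this falls into case (iii) of Theorem \ref{Alpha} with $\alpha = \infty$ and $\xi = 0$, yielding $\dim_{\rm H} E = 1/2$, while $\alpha = \infty$ ensures $\Pi_{n_k - 1}^{(m)}(x) \geq e^{M n_k}$ eventually for every fixed $M$, so $E \subseteq X(\infty, c)$, etc.

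The main technical obstacle I anticipate is making the convergence $\mathrm{P}_m(\theta,\rho) \to \mathrm{P}(\theta)$ quantitatively rigorous. Concretely, one must show that the perturbation of $\mathcal{L}_\theta$ by the weight $a^\rho \mathbf{1}_{a > e^m}$ has operator norm of order $\sum_{a > e^m} a^{\rho - 2\theta} \to 0$ on an appropriate Banach space (for instance, the Hardy space of holomorphic functions on a disc containing $[0,1]$, as in Mayer and Mauldin--Urba\'nski), and then combine this with the spectral gap of $\mathcal{L}_\theta$ to pass from operator convergence to convergence of the pressures. Once this is in place, the remaining ingredients---the distortion estimates, the cover-counting argument, and the explicit $E$-constructions---are essentially routine given Theorem \ref{Alpha}.
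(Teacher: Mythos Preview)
Your upper bound for finite $b,c$ is a genuinely different route from the paper's. The paper fixes a large threshold $m_*$, decomposes $X_{m_*,n}(b,c)$ combinatorially by the number $\ell$ of positions with $a_j>e^{m_*}$, their locations $j_1<\cdots<j_\ell$, and the dyadic scales $\lambda_k=\lceil\log a_{j_k}\rceil$, and then bounds each piece by direct counting (the cardinalities $\sharp\mathcal{A}_{n,\ell}$, $\sharp\mathcal{B}_{\ell,\lambda}$ are controlled by taking $m_*$ large). Your Chebyshev/tilted-pressure argument is more conceptual, and you overestimate its difficulty: no spectral theory is needed for $\mathrm{P}_m(\theta,\rho)\to\mathrm{P}(\theta)$. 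Submultiplicativity of $q_n$ gives $\limsup_n\tfrac{1}{n}\log\sum_{\mathbb N^n}(\Pi_n^{(m)})^\rho q_n^{-2\theta}\le \tfrac{1}{k}\log\sum_{\mathbb N^k}(\Pi_k^{(m)})^\rho q_k^{-2\theta}$ for each fixed $k$, and since $(\Pi_k^{(m)})^\rho q_k^{-2\theta}\le (a_1\cdots a_k)^{\rho-2\theta}$ is summable over $\mathbb N^k$ for $\rho<2\theta-1$, dominated convergence sends the right side to $\tfrac{1}{k}\log\sum_{\mathbb N^k} q_k^{-2\theta}$; letting $k\to\infty$ yields $\mathrm{P}(\theta)$. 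Your lower bound for finite $b,c$ matches the paper's, except that the paper also perturbs $c$ (taking $s_k=e^{(c+\varepsilon)n_k}$ with $e^\gamma=1+\tfrac{c+\varepsilon}{b+\varepsilon}$); your ratio $1+c/(b+\delta)$ degenerates to $1$ when $c=0$.

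There is a concrete error in your infinite-case lower bound. With $n_k=k^2$ and $s_k=\exp(e^{k^2})$ you get $\log s_{k+1}\big/\sum_{j\le k}\log s_j\sim e^{(k+1)^2}/e^{k^2}=e^{2k+1}\to\infty$, so $\xi=\infty$ and Theorem~\ref{Alpha}(iii) yields $\dim_{\rm H}E=0$, not $1/2$. A choice such as $s_k=e^{k^3}$ (still $n_k=k^2$) repairs this: then $\log s_k/n_k=k\to\infty$ forces $E\subseteq X(\infty,\infty)$, while $\xi=\limsup (k+1)^3\big/\sum_{j\le k}j^3=0$ gives $\dim_{\rm H}E=1/2$. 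The paper sidesteps Theorem~\ref{Alpha}(iii) here entirely, instead taking $u_n=v_n=e^{n^2}$ at \emph{every} index and applying Lemma~\ref{LRlemma} directly to $F(\{u_n\},\{v_n\})\subseteq X(\infty,\infty)$.
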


We first prove the upper bound of Theorem \ref{Xbc}. The proof for the lower bound will be provided after proving the lower bound of Theorem \ref{Alpha}.

\begin{proof}[Proof of the upper bound of Theorem \ref{Xbc}]
We remark that the upper bound of \eqref{Xbconehalf} can be derived from the upper bound of \eqref{Xbcpressure}.
In fact, when $0\leq c<\infty$, for any $G>0$, we see that $X(\infty,c)$ is a subset of $X(G,c)$, and so $\dim_{\rm H} X(\infty,c) \leq \Theta(G,c)$.
Since $\Theta(G,c) \to 1/2$ as $G\to \infty$, we obtain $\dim_{\rm H} X(\infty,c) \leq 1/2$; when $0\leq b<\infty$, for any $G>0$, we get that $\dim_{\rm H} X(b,\infty) \leq \Theta(b,G)$. Letting $G \to \infty$, we have $\dim_{\rm H} X(b,\infty) \leq 1/2$; when $b=c=\infty$, since $X(\infty,\infty) \subseteq X(1,\infty)$, we deduce that $\dim_{\rm H}X(\infty,\infty) \leq 1/2$.

Now it remains to show the upper bound of \eqref{Xbcpressure}. When $b=c=0$, we have $\dim_{\rm H} X(b,c) =1$, which is the unique solution of $\mathrm{P}(\theta)=0$. When $b,c \in [0,\infty)$ with $b+c>0$, let $\Theta(b,c)$ be the solution of $\mathrm{P}(\theta)=b(2\theta-1)+c\theta$. Then $\Theta(b,c)\in (1/2,1)$. For any $s >\Theta(b,c)$, we deduce that $\mathrm{P}(s)<b(2s-1)+cs$. Let $\varepsilon >0$ be small such that
\[
\varepsilon< \min\left\{\frac{2s-1}{s+2}, \frac{b(2s-1)+cs - \mathrm{P}(s)}{(s+2)b+2}\right\}.
\]
Then
\begin{equation}\label{ineq:sepsilon1}
(1-2s)+\varepsilon(s+2)<0
\end{equation}
and
\begin{equation}\label{ineq:sepsilon2}
\left((1-2s) + \varepsilon(s+2)\right)b + \mathrm{P}(s) -cs +2\varepsilon <0
\end{equation}
Choose $m_* >2$ large enough such that
\begin{equation}\label{mle}
\max\left\{\left(e\sqrt{m_*}\right)^{1/\sqrt{m_*}},\ \left(em_*\right)^{1/m_*},\ \left(8e^2\right)^{1/m_*}\right\} \leq e^\varepsilon.
\end{equation}
By the definition of the Diophantine pressure function $\mathrm{P}(s)$ in \eqref{pressuref}, there exists $K_\varepsilon>0$ such that for all $n \geq 1$,
\begin{equation}\label{P+}
\sum_{a_1,\ldots,a_n \in \mathbb{N}} q^{-2s}_n(a_1,\ldots,a_n) \leq K_\varepsilon e^{n\left(\mathrm{P}(s)+\varepsilon\right)}.
\end{equation}
We are going to compute the upper bound of the Hausdorff dimension of $X(b,c)$. Note that $X(b,c) \subseteq X_{m_{*}}(b,c)$ and
\begin{equation}\label{Xm}
X_{m_*}(b,c)= \bigcap^\infty_{N=1} \bigcup^\infty_{n=N} X_{m_*,n}(b,c),
\end{equation}
where $X_{m_*,n}(b,c)$ is defined by
\[
X_{m_*,n}(b,c):= \left\{x\in \mathbb{I}: \Pi^{(m_*)}_n(x) \geq e^{bn},\ a_{n+1}(x) \geq e^{cn}\right\}.
\]
It is sufficient to estimate the upper bound of the Hausdorff dimension of $X_{m_*}(b,c)$.
Since $b+c>0$, there exists $N_0 \in \mathbb N$ such that $\max\{e^{bn}, e^{cn}\}>e^{m_*}$ for all $n \geq N_0$.
Let $n\in \mathbb N$ with $n> N_0$ be fixed. For any $x\in X_{m_*,n}(b,c)$, there exist $1 \leq \ell \leq n$ and $1\leq j_1< \cdots<j_\ell \leq n$ such that $1\leq a_i(x) \leq e^{m_*}$ for all $1\leq i\leq n$ with $i \neq j_1, \ldots,j_\ell$; $a_{j}(x) >e^{m_*}$ for all $j=j_1,\dots,j_{\ell}$; $a_{j_1}(x)\cdots a_{j_\ell}(x) \geq e^{bn}$ and $a_{n+1}(x) \geq e^{cn}$. For any $1\leq k \leq \ell$, let $\lambda_k(x) := \lceil\log a_{j_k}(x)\rceil$. Then $\lambda_k(x) >m_*$, $\lambda_1(x) + \cdots +\lambda_\ell(x) > bn$ and $e^{\lambda_k(x)-1} <a_{j_k}(x) \leq e^{\lambda_k(x)}$. Hence
\begin{equation}\label{Xmn}
X_{m_*,n}(b,c) \subseteq \bigcup^n_{\ell =1}\bigcup_{\lambda > \max\{bn,m_*\ell\}} \bigcup_{1\leq j_1 <\cdots<j_\ell \leq n} \bigcup_{\substack{\lambda_1,\ldots,\lambda_\ell >m_*,\\ \lambda_1 + \cdots +\lambda_\ell =\lambda}} X^{\lambda_1,\ldots,\lambda_\ell}_{j_1,\ldots,j_\ell}(b,c),
\end{equation}
where $X^{\lambda_1,\ldots,\lambda_\ell}_{j_1,\ldots,j_\ell}(b,c)$ is defined by
\begin{align*}
X^{\lambda_1,\ldots,\lambda_\ell}_{j_1,\ldots,j_\ell}(b,c):= \Big\{x\in \mathbb{I}:\ & 1\leq a_i(x) \leq e^{m_*} \,(1\leq i \leq n\ \text{and}\ i \neq j_1, \ldots,j_\ell),\\
&e^{\lambda_k-1} <a_{j_k}(x) \leq e^{\lambda_k}\,(1 \leq k \leq\ell),\ a_{n+1}(x) \geq e^{cn}\Big\}.
\end{align*}

Before computing the upper bound of $\dim_{\rm H}X_{m_*}(b,c)$, we need some notations. For any $n,\ell,\lambda \in \mathbb N$ with $1\leq \ell \leq n$ and $\lambda> \max\{bn,m_*\ell\}$, let
\[
\mathcal{A}_{n,\ell}:=\left\{(j_1,\ldots, j_\ell) \in \mathbb{N}^\ell: 1\leq j_1<\cdots<j_\ell \leq n\right\}
\]
and
\[
\mathcal{B}_{\ell,\lambda}:=\left\{(\lambda_1,\ldots, \lambda_\ell) \in \mathbb{N}^\ell: \lambda_1,\ldots, \lambda_\ell>m_*,\ \lambda_1 + \cdots +\lambda_\ell =\lambda \right\}.
\]
For any $n,\ell,\lambda \in \mathbb N$ with $1\leq \ell \leq n$ and $\lambda > \max\{bn,m_*\ell\}$, $\boldsymbol{j}_\ell:=(j_1,\ldots, j_\ell)\in \mathcal{A}_{n,\ell}$ and $\boldsymbol{\lambda}_\ell:=(\lambda_1,\ldots, \lambda_\ell) \in \mathcal{B}_{\ell,\lambda}$, let
\begin{align*}
\mathcal{C}^{\boldsymbol{\lambda}_\ell}_{\boldsymbol{j}_\ell}(n):= \Big\{(\sigma_1,\ldots,\sigma_n)\in \mathbb{N}^n:\ & e^{\lambda_k-1} <\sigma_{j_k} \leq e^{\lambda_k}\,(1\leq k\leq\ell),\\
& 1\leq \sigma_i\leq e^{m_{*}}\,(i \neq j_1, \ldots,j_\ell)\Big\}.
\end{align*}
For any $(\sigma_1,\ldots,\sigma_n)\in \mathcal{C}^{\boldsymbol{\lambda}_\ell}_{\boldsymbol{j}_\ell}(n)$, let
\[
J_n(\sigma_1,\ldots,\sigma_n) := \bigcup_{j \geq e^{cn}} I_{n+1}(\sigma_1,\ldots,\sigma_n,j).
\]
Then for any $N >N_0$, by \eqref{Xm} and \eqref{Xmn}, we see that $X_{m_*}(b,c)$ is covered by
\begin{align}\label{Cover}
\bigcup^\infty_{n=N} \bigcup^n_{\ell =1}
\bigcup_{\lambda > \max\{bn,m_*\ell\}}
\bigcup_{\boldsymbol{j}_\ell \in \mathcal{A}_{n,\ell}}
\bigcup_{\boldsymbol{\lambda}_\ell \in \mathcal{B}_{\ell,\lambda}}
\bigcup_{(\sigma_1,\dots,\sigma_n)\in \mathcal{C}^{\boldsymbol{\lambda}_\ell}_{\boldsymbol{j}_\ell}(n)} \!\!J_n(\sigma_1,\dots,\sigma_n).
\end{align}

Now let us do some estimates of the cardinalities of $\mathcal{A}_{n,\ell}$ and $\mathcal{B}_{\ell,\lambda}$, and the diameter of $J_n(\sigma_1,\ldots,\sigma_n)$. Fix $n, \ell, \lambda \in \mathbb N$ with $1\leq \ell \leq n$ and $\lambda > \max\{bn, m_*\ell\}$.
\begin{itemize}
  \item  For the cardinality of $\mathcal{B}_{\ell,\lambda}$, we have $\sharp \mathcal{B}_{\ell,\lambda}$ is less than the number of positive integer solutions of $\sum^\ell_{k=1}\lambda_k=\lambda$. The latter is exactly $\binom{\lambda-1}{\ell-1} \leq \binom{\lambda}{\ell}<\frac{\lambda^\ell}{\ell!}$. Using the fact that $k!> (k/e)^k$ for all $k \in \mathbb N$, and combining this with \eqref{mle}, we derive that
  \begin{equation}\label{Bnum}
\sharp \mathcal{B}_{\ell,\lambda} < \frac{\lambda^\ell}{\ell!}<\left(\frac{e \lambda}{\ell}\right)^\ell < (em_*)^{\lambda/m_*} \leq e^{\varepsilon\lambda},
  \end{equation}
where the second to last inequality follows from $\ell < \lambda/m_*$ and the map $\ell \mapsto \left(\frac{e \lambda}{\ell}\right)^\ell$ is increasing in $(0,\lambda)$.

 \item For the cardinality of $\mathcal{A}_{n,\ell}$, we obtain $\sharp \mathcal{A}_{n,\ell} = \binom{n}{\ell}$. Note that $\ell< \lambda/m_*$,
  if $\lambda \leq n\sqrt{m_*}$, then $\ell < n/\sqrt{m_*}$. Being similar to the proof of \eqref{Bnum}, we deduce from \eqref{mle} that
 \[
  \binom{n}{\ell} < \left(\frac{e n}{\ell}\right)^\ell <(e\sqrt{m_*})^{n/\sqrt{m_*}} \leq e^{\varepsilon n};
  \]
  if $\lambda > n\sqrt{m_*}$, then $n < \lambda/\sqrt{m_*}$. By the binomial theorem, we have $\binom{n}{\ell} <2^n$, and so
 \[
  \binom{n}{\ell}<  e^n < e^{\lambda/\sqrt{m_*}} \leq e^{\varepsilon \lambda}.
  \]
  Therefore,
 \begin{align}\label{An2}
  \sharp \mathcal{A}_{n,\ell}< \max\left\{e^{\varepsilon n}, e^{\varepsilon \lambda}\right\}< e^{\varepsilon n}e^{\varepsilon \lambda}.
  \end{align}

  \item For any $\boldsymbol{j}_\ell =(j_1,\ldots, j_\ell)\in \mathcal{A}_{n,\ell}$, $\boldsymbol{\lambda}_\ell =(\lambda_1,\ldots, \lambda_\ell) \in \mathcal{B}_{\ell,\lambda}$ and $(\sigma_1,\ldots,\sigma_n)\in \mathcal{C}^{\boldsymbol{\lambda}_\ell}_{\boldsymbol{j}_\ell}(n)$, by Lemma \ref{q}, we see that
  \begin{align}\label{J1}
  \left|J_n(\sigma_1,\ldots,\sigma_n)\right| &\leq 2\cdot \left|I_n(\sigma_1,\ldots,\sigma_n)\right|\cdot \sum_{j \geq e^{cn}} \frac{1}{j(j+1)}  \nonumber\\
  &<2e^{-cn} \left|I_n(\sigma_1,\ldots,\sigma_n)\right|.
  \end{align}
  It follows from \eqref{cylinder} and \eqref{cysep} that
\begin{align*}
\left|I_n(\sigma_1,\ldots,\sigma_n)\right| &\leq 8^\ell \left(\prod^\ell_{k=1} \left|I_1(\sigma_{j_k})\right|\right) \left|I_{n-\ell}(\tau_1,\ldots,\tau_{n-\ell})\right|\\
&\leq 8^\ell \left(\prod^\ell_{k=1} \sigma^{-2}_{j_k}\right) q^{-2}_{n-\ell} (\tau_1,\ldots,\tau_{n-\ell})\\
&< 8^\ell \left(\prod^\ell_{k=1} e^{-2(\lambda_k-1)}\right) q^{-2}_{n-\ell} (\tau_1,\ldots,\tau_{n-\ell})\\
&= \left(8e^2\right)^\ell e^{-2\lambda}q^{-2}_{n-\ell} (\tau_1,\ldots,\tau_{n-\ell}),
\end{align*}
where $(\tau_1,\ldots,\tau_{n-\ell})$ denotes the sequence obtained by deleting $\sigma_{j_1},\dots,\sigma_{j_\ell}$ from $(\sigma_1,\ldots,\sigma_n)$, namely $1\leq \tau_k \leq e^{m_*}$ for all $1\leq k\leq n-\ell$. Combining this with \eqref{J1}, we get that
  \begin{align}\label{J2}
  \left|J_n(\sigma_1,\ldots,\sigma_n)\right| \leq 2\left(8e^2\right)^\ell e^{-cn} e^{-2\lambda}q^{-2}_{n-\ell} (\tau_1,\ldots,\tau_{n-\ell}).
 \end{align}
\end{itemize}

We are ready to estimate the upper bound of the Hausdorff dimension of $X_m(b,c)$.
By \eqref{Cover}, we deduce that $\mathcal{H}^s(X_{m_{*}}(b,c))$ is bounded above by
\[
\liminf_{N\to \infty} \sum^\infty_{n=N} \sum^n_{\ell =1}
\sum_{\lambda> \max\{bn,m_*\ell\}}
\sum_{\boldsymbol{j}_\ell \in \mathcal{A}_{n,\ell}}
\sum_{\boldsymbol{\lambda}_\ell \in \mathcal{B}_{\ell,\lambda}}
\sum_{(\sigma_1,\ldots,\sigma_n)\in \mathcal{C}^{\boldsymbol{\lambda}_\ell}_{\boldsymbol{j}_\ell}(n)}\left|J_n(\sigma_1,\ldots,\sigma_n)\right|^s.
\]
Let
\[
\textcircled{1}:=\sum_{(\sigma_1,\ldots,\sigma_n)\in \mathcal{C}^{\boldsymbol{\lambda}_\ell}_{\boldsymbol{j}_\ell}(n)}\left|J_n(\sigma_1,\ldots,\sigma_n)\right|^s.
\]
It follows from \eqref{J2} that
\begin{align}\label{cir1sum}
\textcircled{1} \leq \sum_{(\sigma_1,\cdots,\sigma_n)\in \mathcal{C}^{\boldsymbol{\lambda}_\ell}_{\boldsymbol{j}_\ell}(n)} 2^s\left(8e^2\right)^{\ell s}e^{-csn} e^{-2\lambda s}q^{-2s}_{n-\ell} (\tau_1,\ldots,\tau_{n-\ell}).
\end{align}
Since $e^{\lambda_k-1} <\sigma_{j_k} \leq e^{\lambda_k}$ for all $1\leq k\leq \ell$ and $1\leq \sigma_i \leq e^{m_*}$ for all $i \neq j_1,\dots,j_k$,
 we see that the right-hand side of \eqref{cir1sum} is bounded above by
\begin{align*}
&2^s\left(8e^2\right)^{\ell s}e^{-csn} \prod^\ell_{k=1} \left(\sum_{e^{\lambda_k-1} <\sigma_{j_k} \leq e^{\lambda_k}}e^{-2\lambda s}\right) \sum_{1\leq \tau_1,\ldots,\tau_{n-\ell} \leq e^{m_*}}q^{-2s}_{n-\ell} (\tau_1,\ldots,\tau_{n-\ell})\\
&<2^s\left(8e^2\right)^{\ell s}e^{-csn} e^{(1-2s)\lambda} \sum_{\tau_1,\ldots,\tau_{n-\ell} \in \mathbb N}q^{-2s}_{n-\ell} (\tau_1,\ldots,\tau_{n-\ell}).
\end{align*}
Since $\lambda > m_*\ell$, i.e., $\ell < \lambda/m_*$, we deduce from \eqref{mle} that $(8e^2)^{\ell} <(8e^2)^{\lambda/m_*} \leq e^{\varepsilon\lambda}$.
Combining these with \eqref{P+} and \eqref{cir1sum}, we see that
\begin{equation*}
\textcircled{1} < 2^s K_\varepsilon e^{(1-2s+\varepsilon s)\lambda}e^{n(\mathrm{P}(s)-cs+\varepsilon)}.
\end{equation*}
In view of \eqref{Bnum} and \eqref{An2}, we get that
\begin{align*}
\sum_{\boldsymbol{j}_\ell \in \mathcal{A}_{n,\ell}}\sum_{\boldsymbol{\lambda}_\ell \in \mathcal{B}_{\ell,\lambda}} \textcircled{1}\
&< e^{\varepsilon\lambda}\cdot e^{\varepsilon n}e^{\varepsilon \lambda}\cdot 2^s K_\varepsilon e^{(1-2s+\varepsilon s)\lambda}e^{n(\mathrm{P}(s)-cs+\varepsilon)} \\
&=2^s K_\varepsilon e^{\left(1-2s+\varepsilon(s+2)\right)\lambda}e^{n(\mathrm{P}(s)-cs+2\varepsilon)}.
\end{align*}
This together with \eqref{ineq:sepsilon1} gives that
\begin{align*}
\sum_{\lambda > \max\{bn,m_*\ell\}}\sum_{\boldsymbol{j}_\ell \in \mathcal{A}_{n,\ell}}\sum_{\boldsymbol{\lambda}_\ell \in \mathcal{B}_{\ell,\lambda}} \textcircled{1}\ &\leq 2^s K_\varepsilon e^{n(\mathrm{P}(s)-cs+2\varepsilon)}\sum_{\lambda > \max\{bn,m_*\ell\}} e^{\left(1-2s+\varepsilon(s+2)\right)\lambda}\\
& \leq 2^s K_\varepsilon e^{n(\mathrm{P}(s)-cs+2\varepsilon)}\sum_{\lambda > bn} e^{\left(1-2s+\varepsilon(s+2)\right)\lambda} \\
&\leq K_\varepsilon^* e^{n\left(\mathrm{P}(s)-cs+2\varepsilon +\left(1-2s+\varepsilon(s+2)\right)b\right)},
\end{align*}
where $K_\varepsilon^*>0$ is a constant only depending on $s,\varepsilon$ and $b$ (and independent of $n$). By \eqref{ineq:sepsilon2}, we conclude that
\begin{align*}
\mathcal{H}^s(X_{m_{*}}(b,c)) &\leq \liminf_{N\to \infty} \sum^\infty_{n=N} \sum^n_{\ell =1} K_\varepsilon^* e^{n\left(\mathrm{P}(s)-cs+2\varepsilon +\left(1-2s+\varepsilon(s+2)\right)b\right)}\\
&\leq K_\varepsilon^* \cdot \liminf_{N\to \infty}\sum^\infty_{n=N} ne^{n\left(\mathrm{P}(s)-cs+2\varepsilon +\left(1-2s+\varepsilon(s+2)\right)b\right)}=0,
\end{align*}
which implies that $\dim_{\rm H}X_{m_*}(b,c) \leq s$. Then $\dim_{\rm H}X(b,c) \leq s$. Since $s> \Theta(b,c)$ is arbitrary, we obtain
\[
\dim_{\rm H}X(b,c) \leq \Theta(b,c).
\]
\end{proof}

Now we are able to prove the upper bound of Theorem \ref{Alpha}. Recall that
\begin{align*}
E_{L}(\{n_k\},\{s_k\}, \{t_k\}) =\big\{x\in \mathbb{I}: s_k < a_{n_k}(x) \leq s_k+ t_k \ \text{for all large $k\in \mathbb N$}\big\}.
\end{align*}

\begin{lemma}\label{Eularge}
Under the hypotheses (H1), (H2) and (H3), we have
\[
\dim_{\rm H} E_{L}(\{n_k\},\{s_k\}, \{t_k\})  \leq \theta(\alpha,\beta).
\]
\end{lemma}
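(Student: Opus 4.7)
\textbf{Proof plan for Lemma \ref{Eularge}.} The plan is to cover $E_L$ by a set of the form $X(b,c)$ studied in Theorem \ref{Xbc}, with parameters $(b,c)$ arbitrarily close to $(\alpha-\beta,\beta)$, and then read off the desired bound from $\Theta(b,c)$ by letting the approximation parameter tend to $0$.

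First I would observe that $\alpha \geq \beta$: since $s_k \to \infty$, the terms $\log s_j$ are eventually positive, so the partial sums $\sum_{j=1}^{k-1}\log s_j$ are bounded below by a constant, whence $\alpha-\beta = \lim_k\frac{1}{n_k}\sum_{j=1}^{k-1}\log s_j \geq 0$. Now fix $m\in\mathbb N$ and an $\varepsilon>0$ small. Since $s_k\to\infty$, choose $k_1$ large enough that $s_j > e^m$ and $\log s_j>0$ for all $j\geq k_1$, and that the defining inequalities of $E_L$ hold for $k\geq k_1$. For any $x\in E_L$ and any $k\geq k_1+1$, evaluate the two conditions defining $X_m(b,c)$ at $n:=n_k-1$:
\begin{align*}
a_{n+1}(x) &= a_{n_k}(x) > s_k, \\
\Pi^{(m)}_n(x) &\geq \prod_{j=k_1}^{k-1} a_{n_j}(x) \geq \prod_{j=k_1}^{k-1} s_j.
\end{align*}

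Second, I would convert these bounds into exponential form via (H3). Because $\frac{1}{n_k}\sum_{j=1}^k \log s_j\to\alpha$ and $\frac{1}{n_k}\log s_k\to\beta$, for all sufficiently large $k$ one has $\sum_{j=1}^{k-1}\log s_j \geq (\alpha-\beta-\varepsilon)n_k$ and $\log s_k\geq(\beta-\varepsilon)n_k$. Setting $b_\varepsilon := \max\{0,\alpha-\beta-2\varepsilon\}$ and $c_\varepsilon := \max\{0,\beta-2\varepsilon\}$, the two displays above give, for all sufficiently large $k$,
\[
\Pi^{(m)}_{n_k-1}(x) \geq e^{b_\varepsilon(n_k-1)} \quad \text{and} \quad a_{n_k}(x) \geq e^{c_\varepsilon(n_k-1)},
\]
where the gap between $\alpha-\beta-\varepsilon$ and $\alpha-\beta-2\varepsilon$ (resp.\ $\beta-\varepsilon$ and $\beta-2\varepsilon$) absorbs the loss of the constant contributed by the indices $j<k_1$ and the shift $n_k\mapsto n_k-1$; in the degenerate case $\alpha=\beta$ one uses $b_\varepsilon=0$ and the first condition holds trivially because $\Pi^{(m)}_{n_k-1}(x)\geq s_{k_1}>1$. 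Since $\{n_k-1\}$ is an infinite set, this shows $x\in X_m(b_\varepsilon,c_\varepsilon)$. As $m$ was arbitrary, we conclude
\[
E_L \subseteq \bigcap_{m\geq 1} X_m(b_\varepsilon,c_\varepsilon) = X(b_\varepsilon,c_\varepsilon).
\]

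Finally I would apply Theorem \ref{Xbc} to get $\dim_{\rm H} E_L \leq \Theta(b_\varepsilon,c_\varepsilon)$, the unique solution of $\mathrm{P}(\theta)=b_\varepsilon(2\theta-1)+c_\varepsilon\theta$. Letting $\varepsilon\to 0$, the right-hand side converges to $(\alpha-\beta)(2\theta-1)+\beta\theta=(2\alpha-\beta)\theta-(\alpha-\beta)$; since $\mathrm{P}$ is strictly decreasing and real-analytic on $(1/2,\infty)$, the unique root depends continuously on the linear function on the right, so $\Theta(b_\varepsilon,c_\varepsilon)\to\theta(\alpha,\beta)$, yielding $\dim_{\rm H}E_L\leq\theta(\alpha,\beta)$.

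The only genuinely delicate point is ensuring that the various slacks (passing from $n_k$ to $n_k-1$, absorbing the constant coming from the indices $j<k_1$, and the degenerate case $\alpha=\beta$) can all be absorbed simultaneously into a single $\varepsilon$-loss; once that bookkeeping is done, the argument is driven entirely by Theorem \ref{Xbc} and the continuity of $\Theta(\cdot,\cdot)$.
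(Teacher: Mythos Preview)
Your proposal is correct and follows essentially the same approach as the paper: both show $E_L\subseteq X(b,c)$ for $(b,c)$ arbitrarily close to $(\alpha-\beta,\beta)$, invoke Theorem~\ref{Xbc}, and pass to the limit using continuity of $\Theta$. The paper organizes the bookkeeping slightly differently---it first writes $E_L$ explicitly as a countable union over the threshold $K$ from which the constraint $s_k<a_{n_k}(x)\le s_k+t_k$ holds, and then treats the degenerate cases $\beta=0$ and $\alpha=\beta$ as separate cases rather than via your $\max\{0,\cdot\}$ device---but the substance is the same.
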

\begin{proof}
For any $K\in \mathbb N$, let
\[
E_{L}(\{n_k\},\{s_k\}, \{t_k\},K):=\big\{x\in \mathbb{I}: s_k < a_{n_k}(x) \leq s_k+ t_k \  \forall k \geq K\big\}.
\]
Then $E_{L}(\{n_k\},\{s_k\}, \{t_k\}) = \bigcup^\infty_{K=1}E_{L}(\{n_k\},\{s_k\}, \{t_k\},K)$, and so
\begin{align}\label{ElargeK}
\dim_{\rm H} E_{L}(\{n_k\},\{s_k\}, \{t_k\}) = \sup_{K\geq 1}\big\{\dim_{\rm H} E_{L}(\{n_k\},\{s_k\}, \{t_k\},K)\big\}.
\end{align}
This leads to estimating the upper bound of $\dim_{\rm H} E_{L}(\{n_k\},\{s_k\}, \{t_k\},K)$ for all $K \geq 1$. In the following, we only consider the case $K=1$. Then the proofs for other cases $K\geq 2$ can be done by defining $\widetilde{n}_k:=n_{k+K-1}$, $\widetilde{s}_k:=t_{k+K-1}$ and $\widetilde{s}_k:=t_{k+K-1}$.

By \eqref{important}, we deduce that
\begin{equation}\label{alpha-beta}
\lim_{k \to \infty} \frac{1}{n_{k+1}-1}\sum^k_{j=1} \log s_j= \alpha -\beta.
\end{equation}

(i) When $\alpha -\beta>0$ with $\beta>0$, for any $0<\varepsilon<\min\{\alpha-\beta,\beta\}$, we claim that $E_{L}(\{n_k\},\{s_k\}, \{t_k\},1)$ is a subset of $X(\alpha-\beta-\varepsilon, \beta-\varepsilon)$. To see this, for any $m \geq 0$, note that $s_k \to \infty$ as $k \to \infty$, so there exists $N_0 \in \mathbb N$ such that $s_k > e^m$ for all $k \geq N_0$. In view of \eqref{important} and \eqref{alpha-beta}, there exists an integer $N_1>N_0$ such that for all $k \geq N_1$,
\[
s_{N_0}\cdots s_k \geq e^{(\alpha-\beta-\varepsilon)(n_{k+1}-1)}\ \ \text{and}\ \ s_{k+1} \geq e^{(\beta-\varepsilon)n_{k+1}}.
\]
For any $x\in E_{L}(\{n_k\},\{s_k\}, \{t_k\}, 1)$, we obtain $a_{n_k}(x)>s_k$ for all $k \in \mathbb N$, and so for any $k \geq N_1$,
\[
\Pi^{(m)}_{n_{k+1}-1}(x) \geq a_{n_{N_0}}(x)\cdots a_{n_k}(x) > s_{N_0}\cdots s_k \geq e^{(\alpha-\beta-\varepsilon)(n_{k+1}-1)}
\]
and
\[
a_{n_{k+1}}(x)>s_{k+1}>e^{(\beta-\varepsilon)n_{k+1}},
\]
which means that $x\in X_m(\alpha-\beta-\varepsilon, \beta-\varepsilon)$. Hence $x \in X(\alpha-\beta-\varepsilon, \beta-\varepsilon)$. By Theorem \ref{Xbc}, we see that
\[
\dim_{\rm H}E_{L}(\{n_k\},\{s_k\}, \{t_k\}, 1)\leq \Theta(\alpha-\beta-\varepsilon, \beta-\varepsilon).
\]
Since $\Theta(\alpha-\beta-\varepsilon, \beta-\varepsilon) \to \theta(\alpha,\beta)$ as $\varepsilon \to 0$, we have $\dim_{\rm H}E_{L}(\{n_k\},\{s_k\}, \{t_k\},1) \leq \theta(\alpha,\beta)$.

(ii) When $\alpha -\beta>0$ with $\beta=0$, for any $0<\varepsilon <\alpha$, we deduce from \eqref{alpha-beta} that $E_{L}(\{n_k\},\{s_k\}, \{t_k\}, 1)$ is a subset of $X(\alpha-\varepsilon, 0)$. It follows from Theorem \ref{Xbc} that
\[
\dim_{\rm H}E_{L}(\{n_k\},\{s_k\}, \{t_k\},1) \leq \Theta(\alpha-\varepsilon, 0).
\]
Sine $\Theta(\alpha-\varepsilon,0) \to \theta(\alpha,0)$ as $\varepsilon \to 0$, we obtain $\dim_{\rm H}E_{L}(\{n_k\},\{s_k\}, \{t_k\}, 1) \leq \theta(\alpha, 0)$.

(iii) When $\alpha -\beta=0$ with $\beta>0$, for any $0<\varepsilon<\alpha$, $E_{L}(\{n_k\},\{s_k\}, \{t_k\},1)$ is a subset of $X(0, \alpha-\varepsilon)$. By Theorem \ref{Xbc}, we get that
\[
\dim_{\rm H}E_{L}(\{n_k\},\{s_k\}, \{t_k\}, 1) \leq \Theta(0, \alpha-\varepsilon).
\]
Letting $\varepsilon \to 0$ yields that $\dim_{\rm H}E_{L}(\{n_k\},\{s_k\}, \{t_k\},1)\leq \theta(\alpha, \alpha)$.

In summary, $\dim_{\rm H}E_{L}(\{n_k\},\{s_k\}, \{t_k\},1)\leq \theta(\alpha, \beta)$. Therefore, for all $K \in \mathbb N$, $\dim_{\rm H}E_{L}(\{n_k\},\{s_k\}, \{t_k\},K)\leq \theta(\alpha, \beta)$. By \eqref{ElargeK}, we conclude that
\[
\dim_{\rm H}E_{L}(\{n_k\},\{s_k\}, \{t_k\})\leq \theta(\alpha, \beta)
\]
\end{proof}

\subsection{Lower bound}\label{PFA}
In this subsection, we prove the lower bound of Theorem \ref{Alpha}. To this end, we first give an auxiliary result. For any $k,m \in \mathbb{N}$, let
\[
\mathcal{N}_m(k):=\left\{(\sigma_1,\ldots,\sigma_k)\in \mathbb{N}^k: \frac{1}{2^{m}} \leq |I_k(\sigma_1,\ldots,\sigma_k)|<\frac{1}{2^{m-1}}\right\}.
\]
Denote by $N_m(k) := \sharp \mathcal{N}_m(k)$ the cardinality of $\mathcal{N}_m(k)$. The following lemma establishes a relation of $N_m(k)$ and the Diophantine pressure function $\mathrm{P}(\theta)$.

\begin{lemma}\label{NP}
For any $\theta \in (1/2, 1)$ and $0<\varepsilon <\mathrm{P}(\theta)$, and for sufficiently large $k$, there exists $m \in \mathbb{N}$ such that
\begin{equation*}
N_m(k) > 2^{(m+1)\theta}e^{\left(\mathrm{P}(\theta)-\varepsilon\right)k}.
\end{equation*}
\end{lemma}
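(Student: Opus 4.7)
\smallskip

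\noindent \emph{Proof proposal.} The strategy is to interpret the defining sum of the pressure function $\mathrm{P}(\theta)$ as the partition $\sum_{m \geq 1} S_m$, where
\[
S_m \;:=\; \sum_{(a_1,\ldots,a_k)\in\mathcal{N}_m(k)} q_k^{-2\theta}(a_1,\ldots,a_k),
\]
and to run a pigeonhole/contradiction argument: if every $N_m(k)$ were bounded above by $2^{(m+1)\theta}e^{(\mathrm{P}(\theta)-\varepsilon)k}$, the total $\sum_m S_m$ could not match the exponential lower bound $e^{(\mathrm{P}(\theta)-\varepsilon/4)k}$ supplied by the definition \eqref{pressuref}. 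The main subtlety is that there are countably many indices $m$, so the naive contradiction fails; an auxiliary exponent $\theta'<\theta$ will be used to truncate the range of $m$ to a window of length $O(k)$.

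\smallskip

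First I would record the two-sided size bound for cylinders in $\mathcal{N}_m(k)$. From $|I_k| \in [2^{-m}, 2^{-(m-1)})$ and \eqref{cylinder}, one gets $2^{(m-2)/2} < q_k(a_1,\ldots,a_k) \leq 2^{m/2}$, hence
\[
2^{-m\theta} \;\leq\; q_k^{-2\theta}(a_1,\ldots,a_k) \;\leq\; 2^{(2-m)\theta},
\]
which yields $S_m \leq 2^{(2-m)\theta} N_m(k)$. In the other direction, for any $\theta' \in (1/2,\theta)$, writing $q_k^{-2\theta} = q_k^{-2\theta'}\, q_k^{-2(\theta-\theta')}$ and using $q_k > 2^{(m-2)/2}$ gives, for every $M \in \mathbb{N}$,
\[
\sum_{m > M} S_m \;\leq\; 2^{-(M-1)(\theta-\theta')}\,\sum_{(a_1,\ldots,a_k)\in\mathbb{N}^k} q_k^{-2\theta'}(a_1,\ldots,a_k).
\]

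\smallskip

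Second, I would apply the pressure definition twice. For $k$ sufficiently large,
\[
\sum_{(a_1,\ldots,a_k)\in\mathbb{N}^k} q_k^{-2\theta}(a_1,\ldots,a_k) \;\geq\; e^{(\mathrm{P}(\theta)-\varepsilon/4)k}, \qquad \sum_{(a_1,\ldots,a_k)\in\mathbb{N}^k} q_k^{-2\theta'}(a_1,\ldots,a_k) \;\leq\; e^{(\mathrm{P}(\theta')+\varepsilon/4)k}.
\]
Since $\mathrm{P}$ is strictly decreasing on $(1/2,\infty)$, $\mathrm{P}(\theta') > \mathrm{P}(\theta)$. Choose a constant $A > \frac{\mathrm{P}(\theta')-\mathrm{P}(\theta)+\varepsilon/2}{(\theta-\theta')\log 2}$ and set $M_k := \lceil Ak \rceil$. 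The tail estimate then forces
\[
\sum_{m > M_k} S_m \;\leq\; \tfrac{1}{2}\,e^{(\mathrm{P}(\theta)-\varepsilon/4)k} \qquad\text{for } k \text{ large},
\]
so the head accounts for a full exponential share, namely $\sum_{m=1}^{M_k} S_m \geq \tfrac{1}{2}\,e^{(\mathrm{P}(\theta)-\varepsilon/4)k}$.

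\smallskip

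Third, I would assume toward a contradiction that $N_m(k) \leq 2^{(m+1)\theta} e^{(\mathrm{P}(\theta)-\varepsilon)k}$ for every $m$. Combining with $S_m \leq 2^{(2-m)\theta} N_m(k)$ telescopes the $m$-dependence:
\[
\sum_{m=1}^{M_k} S_m \;\leq\; \sum_{m=1}^{M_k} 2^{(2-m)\theta}\cdot 2^{(m+1)\theta}e^{(\mathrm{P}(\theta)-\varepsilon)k} \;=\; M_k\cdot 2^{3\theta}\, e^{(\mathrm{P}(\theta)-\varepsilon)k}.
\]
Since $M_k$ is linear in $k$ but the factor $e^{3\varepsilon k/4}$ separating the two bounds grows exponentially, for all sufficiently large $k$ this contradicts the head lower bound, producing an index $m \in \{1,\ldots,M_k\}$ with $N_m(k) > 2^{(m+1)\theta} e^{(\mathrm{P}(\theta)-\varepsilon)k}$. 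I expect the main obstacle to be arranging the tail truncation: the per-term bound $S_m \leq 2^{(2-m)\theta} N_m(k)$ is sharp enough in size but gives no control on the number of active indices, so the Hölder-type split through $\theta'<\theta$ (together with strict monotonicity of $\mathrm{P}$) is the essential ingredient.
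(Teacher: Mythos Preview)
Your argument is correct. Both your proof and the paper's proceed by contradiction and hinge on an auxiliary exponent, but the implementations differ. The paper shifts to a slightly \emph{larger} exponent $\theta+\delta$ and bounds
\[
\sum_{(a_1,\ldots,a_{k})\in\mathbb{N}^{k}} |I_{k}|^{\theta+\delta}
\;\leq\; \sum_{m\ge 1} N_m(k)\,2^{-(m-1)(\theta+\delta)}
\;\leq\; C_\delta\, e^{(\mathrm{P}(\theta)-\varepsilon)k},
\]
the assumed bound on $N_m(k)$ turning the infinite $m$-sum into a convergent geometric series in one stroke; taking $k\to\infty$ gives $\mathrm{P}(\theta+\delta)\le \mathrm{P}(\theta)-\varepsilon$, and continuity of $\mathrm{P}$ at $\theta$ finishes. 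You instead stay at exponent $\theta$, use a \emph{smaller} auxiliary exponent $\theta'<\theta$ purely to truncate the tail $m>M_k$ via the $q_k^{-2(\theta-\theta')}$ factor, and then bound the finite head directly. The paper's route is shorter and avoids the explicit cutoff $M_k$; your route is more explicit about the pigeonhole structure and locates the good index $m$ within a window of size $O(k)$, which is extra information though not needed downstream.
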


\begin{proof}
We proceed by contradiction. Suppose that there exist $\theta_* \in (1/2,1)$, $0<\varepsilon_0<\mathrm{P}(\theta_*)$ and a strictly increasing sequence $\{k_i\}$ of positive integers such that for all $m \in \mathbb N$ and for all $i \in \mathbb N$,
\[
N_m(k_i) \leq 2^{\left(m+1\right)\theta_*}e^{\left(\mathrm{P}(\theta_*)-\varepsilon_0\right)k_i}.
\]
For any small $0<\delta<1/2-\theta_*$ and for all $i \in \mathbb N$, we see that
\begin{align*}
\sum_{(a_1,\ldots,a_{k_i}) \in \mathbb{N}^{k_i}} |I_{k_i}(a_1,\ldots,a_{k_i})|^{\theta_*+\delta} &\leq \sum^\infty_{m=1} N_m(k_i)\cdot 2^{-(\theta_*+\delta)(m-1)}\\
&\leq 2^{\theta_*+\delta} e^{(\mathrm{P}(\theta_*)-\varepsilon_0)k_i}
\sum^\infty_{m=1}2^{(m+1)\theta_*}\cdot 2^{-(\theta_*+\delta)m}\\
& = \frac{2^{2\theta_*+\delta}}{2^\delta -1}e^{(\mathrm{P}(\theta_*)-\varepsilon_0)k_i},
\end{align*}
which implies that
\[
\frac{1}{k_i}\log \sum_{(a_1,\ldots,a_{k_i}) \in \mathbb{N}^{k_i}} |I_{k_i}(a_1,\ldots,a_{k_i})|^{\theta_*+\delta} \leq \mathrm{P}(\theta_*)-\varepsilon_0 + \frac{\log(\frac{2^{2\theta_*+\delta}}{2^\delta -1})}{k_i}.
\]
By the definition of $\mathrm{P}(\cdot)$, we have $\mathrm{P}(\theta_*+\delta) \leq \mathrm{P}(\theta_*)-\varepsilon_0$. Since the Diophantine pressure function is continuous, we obtain $\mathrm{P}(\theta_*) \leq \mathrm{P}(\theta_*)-\varepsilon_0$. This is a contradiction.
\end{proof}

We are ready to give the proof for the lower bound of Theorem \ref{Alpha}. To this end, we first introduce some symbols and notations.

\subsubsection{Symbols and notations}
For simplicity, let $\theta_0:=\theta(\alpha,\beta)$. Then $1/2<\theta_0<1$, and $\mathrm{P}(\theta_0)=(2\alpha -\beta)\theta_0 - (\alpha -\beta)$. By Lemma \ref{NP}, for any $0<\varepsilon<\mathrm{P}(\theta_0)$, there are positive integers $m_0>1$ and $k_0>1$ such that
\begin{equation*}\label{k0n0}
N_{m_0}(k_0) > 2^{(m_0+1)\theta_0}e^{(\mathrm{P}(\theta_0)-\varepsilon)k_0}.
\end{equation*}
Denote
\[
U:= \frac{1}{k_0}\log N_{m_0}(k_0)\ \ \ \text{and}\ \ \ V:=  \frac{m_0+1}{2k_0}\log 2.
\]
Then $U>2V\theta_0+\mathrm{P}(\theta_0)-\varepsilon$, and so
\begin{equation}\label{UVP}
U+(\alpha -\beta) +\varepsilon > (2V+2\alpha-\beta)\theta_0.
\end{equation}

Note that $\{n_k\}$ is a strictly increasing sequence of positive integers. Let $n_0:=0$.
For any $k\in \mathbb N$, we write $n_k-n_{k-1}-1$ as
\begin{equation}\label{nk-nk-1}
n_k-n_{k-1}-1=\ell_kk_0+r_k,
\end{equation}
where $\ell_k \geq 0$ and $0\leq r_k<k_0$ are integers. Then
\begin{equation*}\label{ellkest}
\frac{n_k-n_{k-1}}{k_0}-1\leq \ell_k \leq \frac{n_k-n_{k-1}-1}{k_0}.
\end{equation*}
For any $k \in \mathbb N$, let $\widehat{\mathcal{W}}_k$ be the set of sequences of the form
\[
(\underbrace{1,\dots,1}_{r_k},j)
\]
with $s_k<j\leq s_k+t_k$, namely
\[
\widehat{\mathcal{W}}_k:=\left\{(1,\dots,1,j) \in \mathbb N^{r_k+1}: s_k < j \leq s_k+t_k\right\}.
\]
For $\sigma=(\sigma_1,\dots,\sigma_m)$ and $\tau=(\tau_1,\dots,\tau_n)$, we denote by $\sigma*\tau$ the concatenation of $\sigma$ and $\tau$, namely $\sigma*\tau=(\sigma_1,\dots,\sigma_m,\tau_1,\dots,\tau_n)$. For any $k \in \mathbb N$, let
\[
\mathcal{W}_{n_{k}-n_{k-1}}:=\left\{w_1*\cdots *w_{\ell_k}* w_{\ell_k+1}: w_i\in \mathcal{N}_{m_0}(k_0)\,(i=1,\dots,\ell_k),w_{\ell_k+1} \in \widehat{\mathcal{W}}_k \right\}.
\]
We remark that each sequence of $\mathcal{W}_{n_{k}-n_{k-1}}$ is of the form $w_1*\cdots *w_{\ell_k}* w_{\ell_k+1}$, where $w_1,\dots,w_{\ell_k}$ are from $\mathcal{N}_{m_0}(k_0)$ and $w_{\ell_k+1}$ belongs to $\widehat{\mathcal{W}}_k$. Hence the length of the sequence of $\mathcal{W}_{n_{k}-n_{k-1}}$ is $n_k-n_{k-1}$. Treating each $w_i$ as a block, we know that each sequence of $\mathcal{W}_{n_{k}-n_{k-1}}$ is divided into $\ell_k+1$ blocks, where the first $\ell_k$ blocks are of length $k_0$ (called \textbf{normal blocks}), and the last block is of length $r_k+2$ (called a \textbf{exceptional block}). Therefore, each sequence of $\mathcal{W}_{n_{k}-n_{k-1}}$ must have an exception block, whereas it may not contain a normal block. By definition, we deduce that the cardinality of $\mathcal{W}_{n_{k}-n_{k-1}}$ is
\begin{align}\label{Wnk-nk-1}
\sharp \mathcal{W}_{n_{k}-n_{k-1}} &= \left(N_{m_0}(k_0)\right)^{\ell_k} \left(\lfloor s_k+t_k\rfloor -\lfloor s_k\rfloor\right)\nonumber \\
&\geq \left(N_{m_0}(k_0)\right)^{\frac{n_k-n_{k-1}}{k_0}-1}\lfloor t_k\rfloor.
\end{align}
For any $k\in \mathbb N$, let
\[
\mathcal{W}_{n_{k}}:=\left\{W_{n_1}*W_{n_2-n_1}*\cdots *W_{n_k-n_{k-1}}: W_{n_{i}-n_{i-1}} \in \mathcal{W}_{n_{i}-n_{i-1}}\,(i=1,\dots,k)\right\}.
\]
We point out that each sequence of $\mathcal{W}_{n_{k}}$ is the concatenation of the sequences taken from $W_{n_1},W_{n_2-n_1},\dots,W_{n_k-n_{k-1}}$ respectively, and so it is of length $n_k$. Moreover, there are $\ell_1+\dots+\ell_{k-1}+\ell_k+k$ blocks in each sequence of $\mathcal{W}_{n_{k}}$. The illustration for the block structure of the sequence of $\mathcal{W}_{n_{k}}$ is as follows. We use the position of digits to chunk for the convenience of counting.
\begin{figure}[H]
  \centering
  \includegraphics[width=12cm, height=4.65cm]{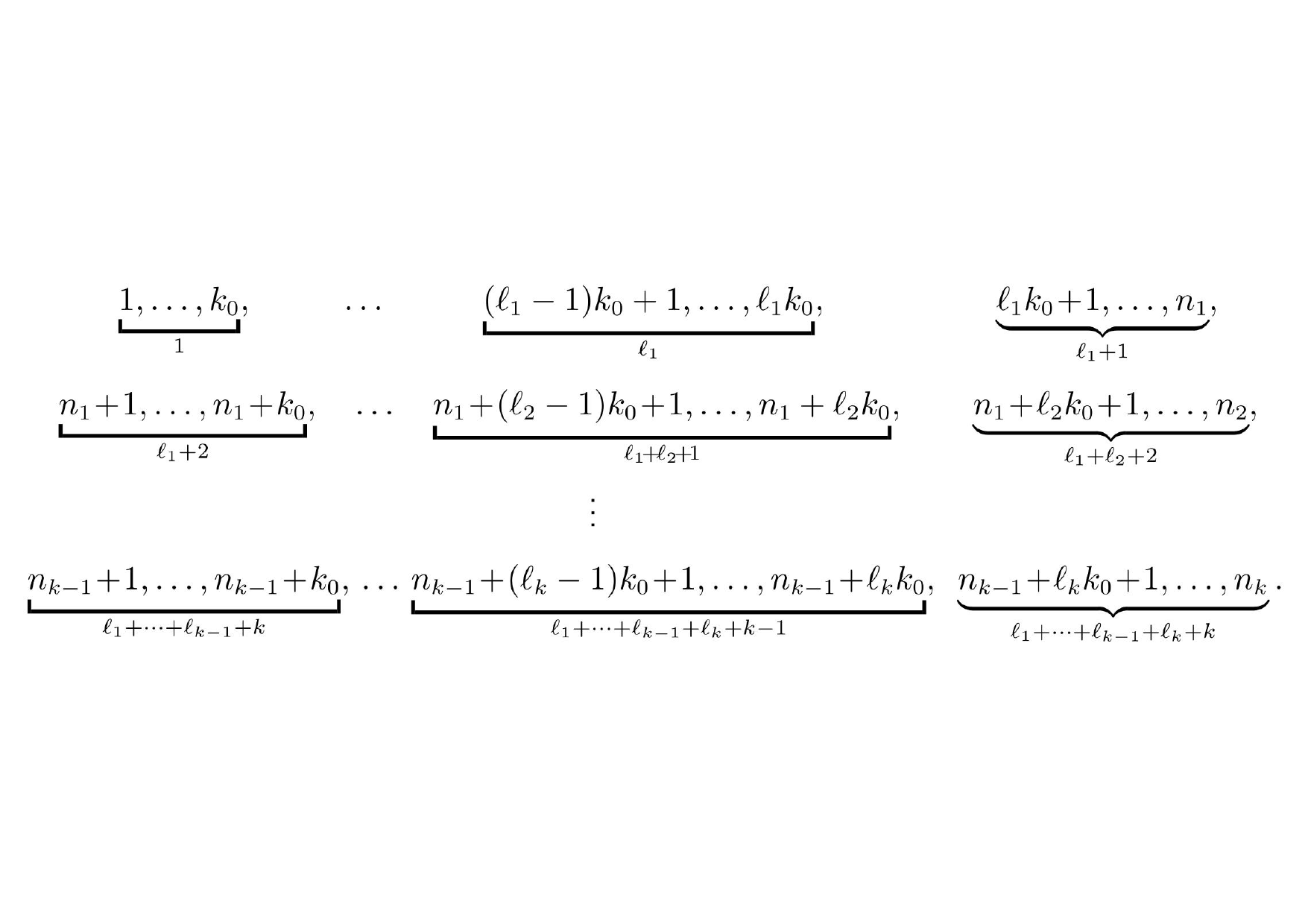}\\
 \caption{The block structure of the sequence of $\mathcal{W}_{n_{k}}$. The symbol `underbracket' represents a normal block, while the symbol `underbrace' represents a exceptional block.}
\end{figure}
\noindent Here the length of the bracket is $k_0$, and the length of the brace is $r_i+1$ for some $1\leq i\leq k$. The number below the brackets or the braces stands for the number of blocks. Moreover, the $(\ell_1+\cdots+\ell_i+i)$-th block is from $\widehat{\mathcal{W}}_i$ for all $1\leq i \leq k$, and other blocks belong to $\mathcal{N}_{m_0}(k_0)$. In particular, the digit at the position $n_i$ belongs to $(s_i, s_i+t_i]$ for all $1\leq i\leq k$, and the digits at other positions are bounded. By \eqref{Wnk-nk-1}, we derive that the cardinality of $\mathcal{W}_{n_{k}}$ is
\begin{align}\label{Wnk}
\sharp \mathcal{W}_{n_{k}} &= \sharp \mathcal{W}_{n_{1}} \times\sharp\mathcal{W}_{n_{2}-n_1}\times \cdots \times\sharp \mathcal{W}_{n_{k}-n_{k-1}}\nonumber\\
 &\geq \left(N_{m_0}(k_0)\right)^{\frac{n_k}{k_0}-k} \left(\lfloor t_1\rfloor \cdots \lfloor t_k\rfloor\right).
\end{align}
For any $k \in \mathbb N$ and $1\leq \ell \leq \ell_{k+1}$, let
\[
\mathcal{W}_{n_{k}+\ell k_0}:=\big\{W_{n_k}*w_{1}*\cdots*w_{\ell}: W_{n_k}\in \mathcal{W}_{n_{k}}, w_{i} \in \mathcal{N}_{m_0}(k_0)\,(i=1,\dots,\ell)\big\}.
\]
Then each sequence of $\mathcal{W}_{n_{k}+\ell k_0}$ is the concatenation of the sequence of $\mathcal{W}_{n_{k}}$ and $\ell$ sequences from $\mathcal{N}_{m_0}(k_0)$. By definition, $\mathcal{W}_{n_{k+1}}$ is the set of sequences of the form $W_{n_k + \ell_{k+1}k_0}*w_{\ell_{k+1}+1}$ for all $W_{n_k + \ell_{k+1}k_0} \in \mathcal{W}_{n_k + \ell_{k+1}k_0}$ and for all $w_{\ell_{k+1}+1} \in \widehat{\mathcal{W}}_{k+1}$. In view of \eqref{Wnk}, we get that the cardinality of $\mathcal{W}_{n_{k}+\ell k_0}$ is
\begin{align}\label{Wnkell}
\sharp \mathcal{W}_{n_{k}+\ell k_0}&= \sharp \mathcal{W}_{n_{k}}\times \left(N_{m_0}(k_0)\right)^\ell\nonumber \\
& \geq \left(N_{m_0}(k_0)\right)^{\frac{n_k+\ell k_0}{k_0}-k} \left(\lfloor t_1\rfloor \cdots \lfloor t_k\rfloor\right).
\end{align}

Now we are going to give the proof for the lower bound of $\dim_{\rm H}E(\{n_k\},\{s_k\},\{t_k\})$ by Lemma \ref{lower}.
The process is divided into the following three steps.

\subsubsection{Step I: Constructing the sequence $\{\mathrm{E}_n\}$ of subsets in $[0,1]$}
 Denote by $M_0$ the maximum of all digits of the sequences in $\mathcal{N}_{m_0}(k_0)$. For any $k \in \mathbb N$, let $L_0:=0$ and
\[
L_k:=\ell_1+\dots+\ell_k+k.
\]
Then $\{L_k\}$ is a strictly increasing sequence of positive integers. For any $n\in \mathbb N$, there exists a unique $k \in \mathbb N$ such that $L_{k-1}\leq  n<L_{k}$. The basic intervals of order $n$ and $\mathrm{E}_n$ are defined as follows.

(I-i) When $L_{k-1}\leq n< L_k-1$, that is, $0\leq n-L_{k-1} \leq \ell_k-1$, we know that each sequence of $\mathcal{W}_{n_{k-1}+(n-L_{k-1}) k_0}$ is divided into $n$ blocks, and the $(n+1)$th block is a normal block. For any $\sigma \in \mathcal{W}_{n_{k-1}+(n-L_{k-1})k_0}$, we define the basic interval of order $n$ as
 \[
  J_{n_{k-1}+(n-L_{k-1})k_0}(\sigma):= \bigcup^{M_0}_{j=1} cl (I_{n_{k-1}+(n-L_{k-1})k_0+1}(\sigma*j)),
  \]
  where $cl(\cdot)$ denotes the closure of a set, and
 \[
 \mathrm{E}_n:= \bigcup_{\sigma \in \mathcal{W}_{n_{k-1}+(n-L_{k-1})k_0}}  J_{n_{k-1}+(n-L_{k-1})k_0}(\sigma);
  \]

(I-ii) When $n=L_k-1$, that is, $n-L_{k-1} = \ell_k$, since the $(n+1)$th block is an exceptional block, the definition of the basic interval of order $n$ is different for $r_k>0$ and $r_k=0$. For any $\sigma \in \mathcal{W}_{n_{k-1}+\ell_kk_0}$,
\begin{itemize}
  \item when $r_k>0$, we define the basic interval of order $n$ as
  \[
  J_{n_{k-1}+\ell_kk_0}(\sigma):= cl (I_{n_{k-1}+\ell_kk_0+1}(\sigma*1));
  \]

 \item  when $r_k=0$, that is, $n_k=n_{k-1}+\ell_kk_0+1$, we define the basic interval of order $n$ as
  \[
  J_{n_{k-1}+\ell_kk_0}(\sigma):= \bigcup_{s_k<j\leq s_k+t_k} cl (I_{n_{k}}(\sigma*j)).
  \]
 \end{itemize}
 In this case, $\mathrm{E}_n$ is defined by
 \[
 \mathrm{E}_n:= \bigcup_{\sigma \in \mathcal{W}_{n_{k-1}+\ell_kk_0}}  J_{n_{k-1}+\ell_kk_0}(\sigma);
  \]

Hence $\{\mathrm{E}_n\}$ is a decreasing sequence of subsets of $\mathrm{E}_0:=[0,1]$. Let
\begin{equation}\label{Lemma25E}
\mathrm{E}= \bigcap^\infty_{n=0}\mathrm{E}_n.
\end{equation}
By the construction of $\mathrm{E}_n$, we obtain that $\mathrm{E}$ is a subset of $E(\{n_k\},\{s_k\},\{t_k\})$, and so $\dim_{\rm H}E(\{n_k\},\{s_k\},\{t_k\}) \geq \dim_{\rm H}\mathrm{E}$. Therefore, it remains to compute the lower bound of $\dim_{\rm H}\mathrm{E}$. To this end, by Lemma \ref{lower}, we need to estimate the values of $m_n$ and $\varepsilon_n$.

\subsubsection{Step II: Estimating the values of $m_n$ and $\varepsilon_n$}
 Let us first recall the definitions of $m_n$ and $\varepsilon_n$ in Lemma \ref{lower}: $m_n$ is the smallest number of basic intervals of order $n$ that are contained in the basic interval of order $n-1$, and $\varepsilon_n$ is the smallest number of the lengths of gaps between these basic intervals of order $n$. We propose to compute the lower bound of the liminf in Lemma \ref{lower}, so it is sufficient to estimate the lower bounds of $m_n$ and $\varepsilon_n$

(II-i) When $L_{k-1} \leq n-1<L_{k}-2$ for some $k$, according to the discussions in (I-i), we know that the basic interval of order $n-1$ is of the form $J_{n_{k-1}+\ell k_0}(\sigma)$ with $\ell:=n-1-L_{k-1}$ for some $\sigma \in \mathcal{W}_{n_{k-1}+\ell k_0}$, and contains the basic intervals of order $n$:
 \begin{equation}\label{baseintern}
J_{n_{k-1}+(\ell+1) k_0}(\sigma*\tau)=\bigcup^{M_0}_{j=1} cl (I_{n_{k-1}+(\ell+1) k_0+1}(\sigma*\tau*j))
  \end{equation}
for all $\tau \in \mathcal{N}_{m_0}(k_0)$. Hence $m_n=\sharp \mathcal{N}_{m_0}(k_0) = N_{m_0}(k_0)$. For the gaps between these basic intervals of order $n$, let $\tau^\prime=(b_1,\dots,b_{k_0-1},b_{k_0})$ and $\tau=(b_1,\dots,b_{k_0-1},b_{k_0}+1)$ be sequences of $\mathcal{N}_{m_0}(k_0)$. By the structure of the basic interval of order $n$ as in \eqref{baseintern}, we see that the cylinder $I_{n_{k-1}+(\ell+1) k_0+1}(\sigma*\tau*(M_0+1))$ is within the gap between $J_{n_{k-1}+(\ell+1) k_0}(\sigma*\tau)$ and $J_{n_{k-1}+(\ell+1) k_0}(\sigma*\tau^\prime)$. Then $\varepsilon_n$ is greater than the diameter of $I_{n_{k-1}+(\ell+1) k_0+1}(\omega*(M_0+1))$ for all $\omega\in \mathcal{W}_{n_{k-1}+(\ell+1) k_0}$, namely
   \begin{equation}\label{epsilonnfirst}
 \varepsilon_n >\min_{\omega \in \mathcal{W}_{n_{k-1}+(\ell+1) k_0}}\left\{\left|I_{n_{k-1}+(\ell+1) k_0+1}(\omega*(M_0+1))\right|\right\}.
  \end{equation}

 (II-ii) When $n-1=L_k-2$ for some $k$, we deduce from the conclusions in (I-i) and (I-ii) that the basic interval of order $n-1$ is of the form $J_{n_{k-1}+(\ell_k-1) k_0}(\sigma)$ for some $\sigma \in \mathcal{W}_{n_{k-1}+(\ell_k-1) k_0}$, and contains the basic intervals of order $n$:
 \begin{equation}\label{IIiirk>0}
J_{n_{k-1}+\ell_kk_0}(\sigma*\tau)= cl (I_{n_{k-1}+\ell_kk_0+1}(\sigma*\tau*1))\ \ \ \ \ \ \text{as $r_k>0$}
  \end{equation}
  or
 \begin{equation}\label{IIiirk=0}
J_{n_{k-1}+\ell_kk_0}(\sigma*\tau)=\bigcup_{s_k<j\leq s_k+t_k} cl (I_{n_{k}}(\sigma*\tau*j))\ \ \ \ \ \text{as $r_k=0$}
  \end{equation}
for all $\tau \in \mathcal{N}_{m_0}(k_0)$. By definition, we have $m_n=\sharp \mathcal{N}_{m_0}(k_0) = N_{m_0}(k_0)$. For $\varepsilon_n$, we assert that for all large $n$,
\begin{equation}\label{IIiiepsn}
 \varepsilon_n> \min_{\omega \in \mathcal{W}_{n_{k-1}+\ell_k k_0}}\left\{\left|I_{n_{k-1}+\ell_k k_0+1}(\omega*(M_0+1))\right|\right\}.
\end{equation}
To see this, when $r_k>0$, following the similar analyses in (II-i), it follows from \eqref{IIiirk>0} that the inequality \eqref{IIiiepsn} holds; when $r_k=0$, that is, $n_k=n_{k-1}+\ell_kk_0+1$, for any $\tau^\prime=(b_1,\dots,b_{k_0-1},b_{k_0}),\tau=(b_1,\dots,b_{k_0-1},b_{k_0}+1) \in \mathcal{N}_{m_0}(k_0)$, since $s_k > M_0+1$ for large $k$, we deduce from \eqref{IIiirk=0} that $I_{n_{k-1}+\ell_k k_0+1}(\sigma*\tau^\prime*(M_0+1))$ is within the gap between $J_{n_{k-1}+\ell_k k_0}(\sigma*\tau)$ and $J_{n_{k-1}+\ell_kk_0}(\sigma*\tau^\prime)$. Then $\varepsilon_n$ is greater than the diameter of $I_{n_{k-1}+\ell_kk_0+1}(\omega*(M_0+1))$ for all $\omega\in \mathcal{W}_{n_{k-1}+\ell_kk_0}$, and so \eqref{IIiiepsn} is true.

Combining (II-i) and (II-ii), when $L_{k-1} \leq n-1 \leq L_{k}-2$ for some $k$, that is, $L_{k-1}<n< L_{k}$, we have
\begin{itemize}
  \item $m_n=\sharp \mathcal{N}_{m_0}(k_0) = N_{m_0}(k_0)$;
  \item $\varepsilon_n$ is greater than the diameter of $I_{n_{k-1}+(n-L_{k-1})k_0+1}(\omega*(M_0+1))$ for all $\omega\in \mathcal{W}_{n_{k-1}+(n-L_{k-1})k_0}$, namely
\begin{equation*}\label{epsilonfirst}
  \varepsilon_n> \min_{\omega \in \mathcal{W}_{n_{k-1}+(n-L_{k-1})k_0}}\left\{\left|I_{n_{k-1}+(n-L_{k-1}) k_0+1}(\omega*(M_0+1))\right|\right\}.
  \end{equation*}
\end{itemize}

(II-iii) When $n-1=L_k-1$ for some $k$, that is, $n=L_k$, by the results in (I-i) and (I-ii), we know that the basic interval of order $n-1$ is of the form $J_{n_{k-1}+\ell_k k_0}(\sigma)$ for some $\sigma \in \mathcal{W}_{n_{k-1}+\ell_k k_0}$, and contains the basic intervals of order $n$:
 \begin{equation*}
J_{n_{k}}(\sigma*\tau)=\bigcup^{M_0}_{j=1} cl (I_{n_{k}+1}(\sigma*\tau*j))
  \end{equation*}
for all $\tau \in \widehat{\mathcal{W}}_k$. Being similar to the estimates in (II-i), we obtain
\begin{itemize}
  \item $m_n=\sharp \widehat{\mathcal{W}}_k = \lfloor s_k+t_k\rfloor - \lfloor s_k\rfloor \geq \lfloor t_k\rfloor$;

\item $\varepsilon_n$ is greater than the diameter of $I_{n_{k}+1}(\omega*(M_0+1))$ for all $\omega \in \mathcal{W}_{n_{k}}$, namely
 \begin{equation*}\label{epsilonnsecond}
 \varepsilon_n > \min_{\omega \in \mathcal{W}_{n_{k}}}\left\{\left|I_{n_{k}+1}(\omega*(M_0+1))\right|\right\}.
  \end{equation*}
\end{itemize}

\subsubsection{Step III: Calculating the liminf in Lemma \ref{lower}}
To calculate the liminf
\begin{equation}\label{LiminfLemma}
\liminf_{n \to \infty} \frac{\log (m_1\cdots m_{n-1})}{-\log (m_{n}\varepsilon_{n})},
\end{equation}
it is sufficient to estimate the lower bounds of the values of $m_1\cdots m_{n-1}$ and $m_{n}\varepsilon_{n}$.
For $m_1\cdots m_{n-1}$, it is the number of all basic intervals of order $n-1$. For $m_{n}\varepsilon_{n}$, we will use the estimates of $m_n$ and $\varepsilon_n$ in Step II.

For any $W_{n_k-n_{k-1}} \in \mathcal{W}_{n_k-n_{k-1}}$, we first estimate the diameter of the cylinder $I_{n_k-n_{k-1}}(W_{n_k-n_{k-1}})$. Note that $W_{n_k-n_{k-1}}=w_1*\cdots *w_{\ell_k}* \widehat{w}_{k}$, where $w_i\in \mathcal{N}_{m_0}(k_0)$ for all $1\leq i \leq \ell_k$ and $\widehat{w}_{k} \in \widehat{\mathcal{W}}_{k}$, by Lemma \ref{q}, we see that
\begin{align}\label{cylindenk1}
\left|I_{n_k-n_{k-1}}(W_{n_k-n_{k-1}})\right| &\geq \frac{1}{2}\cdot \left|I_{\ell_k k_0}(w_1*\cdots *w_{\ell_k})\right| \cdot \left|I_{r_k+1}(\widehat{w}_k)\right|\nonumber\\
& \geq \cdots\cdots\nonumber \\
&\geq \frac{1}{2^{\ell_k}} \left(\prod^{\ell_k}_{i=1} \left|I_{k_0}(w_i)\right|\right)\cdot \left|I_{r_k+1}(\widehat{w}_k)\right|.
\end{align}
By the definitions of $\mathcal{N}_{m_0}(k_0)$ and $\widehat{\mathcal{W}}_{k}$, and the inequality \eqref{cylinder}, we get that
\[
\left|I_{k_0}(w_i)\right| \geq \frac{1}{2^{m_0}}\ \ \ \text{and}\ \ \ \left|I_{r_k+1}(\widehat{w}_k)\right| > \frac{1}{2^{2(r_k+1)+1}(s_k+t_k)^2} > \frac{1}{8^{k_0}(s_k+t_k)^2}.
\]
Combining this with \eqref{cylindenk1}, we derive that
\begin{align}\label{cylindenk2}
\left|I_{n_k-n_{k-1}}(W_{n_k-n_{k-1}})\right| > \left(\frac{1}{2^{m_0+1}}\right)^{\ell_k} \cdot \frac{1}{8^{k_0}(s_k+t_k)^2}.
\end{align}
For any $\sigma \in \mathcal{W}_{n_k}$, we next estimate the diameter of the cylinder $I_{n_k}(\sigma)$. Note that $\sigma=W_{n_1}*W_{n_2-n_1}*\dots*W_{n_k-n_{k-1}}$, where $W_{n_i-n_{i-1}} \in \mathcal{W}_{n_i-n_{i-1}}$ for all $1\leq i \leq k$, so it follows from \eqref{cylindenk2} and Lemma \ref{q} that
\begin{align}\label{cylindenk}
\left|I_{n_k}(\sigma)\right| &\geq \frac{1}{2^{k-1}} \prod^{k}_{i=1} \left|I_{n_i-n_{i-1}}(W_{n_i-n_{i-1}})\right| \nonumber\\
&>\frac{2}{2^k\cdot 8^{k_0k}}\cdot \left(\frac{1}{2^{m_0+1}}\right)^{\ell_1+\cdots+\ell_k} \cdot \frac{1}{(s_1+t_1)^2\cdots(s_k+t_k)^2} \nonumber\\
&> \frac{2}{2^k\cdot8^{k_0k}}  \cdot \left(\frac{1}{2^{m_0+1}}\right)^{\frac{n_k}{k_0}} \cdot \frac{1}{(s_1+t_1)^2\cdots(s_k+t_k)^2},
\end{align}
where the last inequality follows from $\ell_i < \frac{n_i-n_{i-1}}{k_0}$ for all $1\leq i\leq k$. For any $1 \leq \ell \leq \ell_{k+1}$ and $\sigma \in \mathcal{W}_{n_k+\ell k_0}$, we estimate the diameter of the cylinder $I_{n_k+\ell k_0}(\sigma)$. Note that $\sigma=W_{n_k}*w_{1}*\cdots*w_{\ell}$, where $W_{n_k}\in \mathcal{W}_{n_{k}}$ and $w_{i} \in \mathcal{N}_{m_0}(k_0)$ for all $1\leq i\leq\ell$, we deduce from \eqref{cylindenk} and Lemma \ref{q} that
\begin{align}\label{nk+ell}
\left|I_{n_k+\ell k_0}(\sigma)\right| &\geq  \frac{1}{2^\ell} \cdot\left|I_{n_k}(W_{n_k})\right|\cdot\left(\prod^{\ell}_{i=1} \left|I_{k_0}(w_i)\right|\right)\nonumber\\
&>\left(\frac{1}{2^{m_0+1}}\right)^{\ell}\frac{2}{2^k\cdot8^{k_0k}}  \cdot \left(\frac{1}{2^{m_0+1}}\right)^{\frac{n_k}{k_0}+\ell} \cdot \frac{1}{(s_1+t_1)^2\cdots(s_k+t_k)^2}\nonumber\\
&= \frac{2}{2^k\cdot 8^{k_0k}} \cdot \left(\frac{1}{2^{m_0+1}}\right)^{\frac{n_k+\ell k_0}{k_0}} \cdot \frac{1}{(s_1+t_1)^2\cdots(s_k+t_k)^2}.
\end{align}

Now we are ready to estimate the lower bounds for the values of $m_1\cdots m_{n-1}$ and $m_{n}\varepsilon_{n}$, and the liminf \eqref{LiminfLemma}.

(III-i) When $n=L_k$ for some $k$, that is, $n-1=L_k-1$, we deduce from \eqref{Wnkell} that the number of all basic intervals of order $n-1$ is
\begin{align}\label{Liminfm1mn-1}
m_1\cdots m_{n-1} &= \sharp \mathcal{W}_{n_{k-1}+\ell_k k_0}\nonumber \\
&\geq\left(N_{m_0}(k_0)\right)^{\frac{n_{k-1}+\ell_k k_0}{k_0}-(k-1)} \left(\lfloor t_1\rfloor \cdots \lfloor t_{k-1}\rfloor\right) \nonumber\\
& \geq \left(N_{m_0}(k_0)\right)^{\frac{n_{k}}{k_0}-k}  \left(\lfloor t_1\rfloor \cdots \lfloor t_{k-1}\rfloor\right),
\end{align}
where the last inequality follows from $n_{k-1}+\ell_k k_0=n_k-(r_k+1) \geq n_k-k_0$. For $m_n\varepsilon_n$, by (II-iii), we know that $m_n \geq \lfloor t_{k}\rfloor$ and
\[
\varepsilon_n > \min_{\omega \in \mathcal{W}_{n_{k}}}\left\{|I_{n_{k}+1}(\omega*(M_0+1))|\right\}.
\]
For any $\omega \in \mathcal{W}_{n_{k}}$, by \eqref{cylindenk} and Lemma \ref{q}, we derive that
\begin{align*}
\left|I_{n_{k}+1}(\omega*(M_0+1))\right| &\geq \frac{1}{2}\cdot \left|I_{n_k}(\omega)\right|\cdot \left|I_1(M_0+1)\right|\\
&> \frac{1}{2^k8^{k_0k}(M_0+2)^2}  \left(\frac{1}{2^{m_0+1}}\right)^{\frac{n_k}{k_0}} \left(\prod^k_{i=1} \frac{1}{(s_i+t_i)^2}\right).
\end{align*}
Hence
\[
m_n\varepsilon_n >\frac{\lfloor t_{k}\rfloor}{2^k 8^{k_0k}(M_0+2)^2} \left(\frac{1}{2^{m_0+1}}\right)^{\frac{n_k}{k_0}} \left(\prod^k_{i=1} \frac{1}{(s_i+t_i)^2}\right).
\]
Combining these with \eqref{Liminfm1mn-1} and the hypothesis (H1) and $s_k,t_k \to \infty$ as $k \to \infty$, we deduce that the liminf in \eqref{LiminfLemma} with $n=L_k$ is not less than
\begin{equation}\label{equliminf1}
\liminf_{k \to \infty} \frac{\frac{n_k}{k_0}\log N_{m_0}(k_0) +\sum^{k-1}_{i=1}\log t_i}{\frac{n_k}{k_0}(m_0+1)\log 2 + 2\sum^{k}_{i=1}\log (s_i+t_i) -\log t_k}.
\end{equation}
It follows from the hypotheses (H2) and (H3) that
\[
\lim_{k \to \infty} \frac{\sum^{k-1}_{i=1}\log t_i}{n_k} =  \alpha -\beta\ \text{and}\ \lim_{k \to \infty} \frac{2\sum^{k}_{i=1}\log (s_i+t_i) -\log t_k}{n_k} =2\alpha -\beta.
\]
Hence we conclude from \eqref{UVP} that the liminf in \eqref{equliminf1} is equal to
\begin{align*}
\frac{\frac{1}{k_0}\log N_{m_0}(k_0) +(\alpha-\beta)}{\frac{1}{k_0}(m_0+1)\log 2 + (2\alpha -\beta)} &= \frac{U+(\alpha-\beta)}{2V+(2\alpha-\beta)}\\
 &> \theta_0- \frac{\varepsilon}{2V+(2\alpha-\beta)}\\
 &> \theta_0- \frac{\varepsilon}{\alpha}.
\end{align*}
Therefore, the liminf in \eqref{LiminfLemma} with $n=L_k$ is not less than $\theta_0 - \varepsilon/\alpha$.

(III-ii) When $L_{k-1}<n<L_k$ for some $k$, that is, $L_{k-1}\leq n-1<L_{k}-1$, let
\[
h_k:=n_{k-1}+ (n-1-L_{k-1})k_0.
\]
Then $n_{k-1}\leq h_k <n_k$ and $h_k/k \to \infty$ as $k \to \infty$. For $m_1\cdots m_{n-1}$, by \eqref{Wnkell}, we see that the number of all basic intervals of order $n-1$ is
\begin{align}\label{Liminfm1mnhk}
m_1\cdots m_{n-1} = \sharp \mathcal{W}_{h_k} \nonumber
&\geq \left(N_{m_0}(k_0)\right)^{\frac{h_k}{k_0}-(k-1)} \left(\lfloor t_1\rfloor \cdots \lfloor t_{k-1}\rfloor\right)\nonumber\\
&> \left(N_{m_0}(k_0)\right)^{\frac{h_k}{k_0}-k} \left(\lfloor t_1\rfloor \cdots \lfloor t_{k-1}\rfloor\right).
\end{align}
For $m_n\varepsilon_n$, we deduce from (II-i) and (II-ii) that $m_n= N_{m_0}(k_0)$ and
\[
\varepsilon_n >\min_{\omega \in \mathcal{W}_{h_{k}+k_0}}\left\{\left|I_{h_{k}+k_0+1}(\omega*(M_0+1))\right|\right\}.
\]
For any $\omega \in \mathcal{W}_{h_{k}+k_0}$, it follows from \eqref{nk+ell} and Lemma \ref{q} that
\begin{align*}
\left|I_{h_{k}+k_0+1}(\omega*(M_0+1))\right| &\geq \frac{1}{2}\cdot \left|I_{h_k+k_0}(\omega)\right|\cdot \left|I_1(M_0+1)\right| \\
&>  \frac{1}{2^{k} 8^{k_0k}(M_0+2)^2} \left(\frac{1}{2^{m_0+1}}\right)^{\frac{h_k+k_0}{k_0}} \left(\prod^{k-1}_{i=1}\frac{1}{(s_i+t_i)^2}\right).
\end{align*}
Hence
\[
m_n\varepsilon_n >\frac{N_{m_0}(k_0)}{2^k 8^{k_0k}(M_0+2)^2}  \cdot \left(\frac{1}{2^{m_0+1}}\right)^{\frac{h_k+k_0}{k_0}} \left(\prod^{k-1}_{i=1}\frac{1}{(s_i+t_i)^2}\right).
\]
Combining this with \eqref{Liminfm1mnhk} and the hypothesis (H1) and $s_k,t_k \to \infty$ as $k \to \infty$, in this case, we know that the liminf in \eqref{LiminfLemma} is not less than
\begin{equation}\label{equliminfhk1}
\liminf_{k \to \infty} \frac{\frac{h_k}{k_0}\log N_{m_0}(k_0) +\sum^{k-1}_{i=1}\log t_i}{\frac{h_k}{k_0}(m_0+1)\log 2 + 2\sum^{k-1}_{i=1}\log (s_i+t_i)}.
\end{equation}
By the hypothesis (H2), we have
\[
\lim_{k \to \infty}\frac{\sum^{k-1}_{i=1}\log s_i}{\sum^{k-1}_{i=1}\log t_i} = \lim_{k \to \infty}\frac{\sum^{k-1}_{i=1}\log s_i}{\sum^{k-1}_{i=1}\log (s_i+t_i)}=1,
\]
which implies that the liminf in \eqref{equliminfhk1} is equal to
\begin{equation}\label{equliminfhk2}
\liminf_{k \to \infty} \frac{\frac{h_k}{k_0}\log N_{m_0}(k_0) +\sum^{k-1}_{i=1}\log s_i}{\frac{h_k}{k_0}(m_0+1)\log 2 + 2\sum^{k-1}_{i=1}\log s_i} = \liminf_{k \to \infty}\frac{U +\frac{\sum^{k-1}_{i=1}\log s_i}{h_k}}{2V+2\frac{\sum^{k-1}_{i=1}\log s_i}{h_k}}.
\end{equation}
Since $U>2V\theta_0+\mathrm{P}(\theta_0)-\varepsilon>2V\theta_0>V$, we see that the map
\[
\gamma \mapsto \frac{U+\gamma}{2V+2\gamma}= \frac{1}{2}+ \frac{U-V}{2V+2\gamma}
\]
is decreasing, and so
\begin{equation}\label{UVnk-1}
\frac{U +\frac{\sum^{k-1}_{i=1}\log s_i}{h_k}}{2V+2\frac{\sum^{k-1}_{i=1}\log s_i}{h_k}} \geq \frac{U +\frac{\sum^{k-1}_{i=1}\log s_i}{n_{k-1}}}{2V+2\frac{\sum^{k-1}_{i=1}\log s_i}{n_{k-1}}}.
\end{equation}
It follows from \eqref{UVP} that
\begin{align*}
U+\alpha &>(2V+2\alpha-\beta)\theta_0 +\beta -\varepsilon \\
&= (2V+2\alpha)\theta_0 +(1-\theta_0)\beta -\varepsilon \\
&\geq (2V+2\alpha)\theta_0 -\varepsilon.
\end{align*}
Combining these with \eqref{equliminfhk2} and \eqref{UVnk-1}, in view of (H3), we deduce that the liminf in \eqref{equliminfhk1} is not less than
\[
\frac{U+\alpha}{2V+2\alpha} > \theta_0- \frac{\varepsilon}{2V+2\alpha} > \theta_0- \frac{\varepsilon}{2\alpha}>\theta_0- \frac{\varepsilon}{\alpha}.
\]
So, in this case, the liminf in \eqref{LiminfLemma} is not less than $\theta_0 - \varepsilon/\alpha$.

Therefore, we conclude from (III-i) and (III-ii) that
\begin{align*}
\dim_{\rm H} E(\{n_k\},\{s_k\},\{t_k\}) &\geq \dim_{\rm H} \mathrm{E} \\
&\geq \liminf_{n \to \infty} \frac{\log (m_1\cdots m_{n-1})}{-\log (m_{n}\varepsilon_{n})}\\
& >\theta_0 - \frac{\varepsilon}{\alpha}.
\end{align*}
Letting $\varepsilon \to 0$, we have
\[
\dim_{\rm H} E(\{n_k\},\{s_k\},\{t_k\}) \geq \theta_0:=\theta(\alpha,\beta).
\]
This completes the proof of the lower bound of Theorem \ref{Alpha}.

We end this section with the proof of the lower bound of Theorem \ref{Xbc}.

\begin{proof}[Proof of the lower bound of Theorem \ref{Xbc}]
We proceed the proof from the following three cases.

(i) When $b=c=0$, we see that $\dim_{\rm H} X(b,c) =1$, which is the unique solution of $\mathrm{P}(\theta)=0$.

(ii) When $b,c\in [0,\infty)$ with $b+c>0$, for any $\varepsilon >0$, there exists $\gamma>0$ such that $b+\varepsilon=\frac{c+\varepsilon}{e^\gamma-1}$, that is, $e^\gamma = 1+\frac{c+\varepsilon}{b+\varepsilon}$. For any $k \in \mathbb N$, let
\[
n_k:=\lceil e^{\gamma k}\rceil+1\ \ \text{and}\ \  s_k=t_k:=e^{(c+\varepsilon)n_{k}}.
\]
We assert that $E(\{n_k\},\{s_k\},\{t_k\})$ is a subset of $X(b, c)$. In fact, for any $m \geq 0$, there is $K_0 \in \mathbb N$ such that $s_k >e^m$ for all $k \geq K_0$. For any $x\in E(\{n_k\},\{s_k\},\{t_k\})$ and for all $k >e^{\gamma K_0}/(e^\gamma-1)+K_0$, since
\begin{align*}
(c+\varepsilon)(n_{K_0}+\cdots+n_{k-1}) &\geq  (c+\varepsilon)\cdot\frac{e^{\gamma k}- e^{\gamma K_0}}{e^{\gamma}-1}+(c+\varepsilon)(k-K_0)\\
&>(b+\varepsilon)e^{\gamma k} \\
&>(b+\varepsilon)(n_k-1),
\end{align*}
we deduce that $a_{n_k}(x) >s_k =e^{(c+\varepsilon)n_k}>e^{c(n_k-1)}$ and
\begin{align*}
\Pi^{(m)}_{n_k-1}(x) &\geq a_{n_{K_0}}(x)\cdots a_{n_{k-1}}(x)\\
&> e^{(c+\varepsilon)(n_{K_0}+\cdots+n_{k-1})}> e^{(b+\varepsilon)(n_k-1)}> e^{b(n_k-1)}.
\end{align*}
That is, $x \in X_m(b,c)$. Hence $x\in X(b,c)$. This means that $E(\{n_k\},\{s_k\},\{t_k\})$ is a subset of $X(b,c)$, and so $\dim_{\rm H}X(b,c) \geq \dim_{\rm H}E(\{n_k\},\{s_k\},\{t_k\})$. Note that
\[
\alpha = \lim_{k \to \infty} \frac{1}{n_k} \sum^k_{j=1} (c+\varepsilon)n_j = b+c+2\varepsilon\ \ \text{and}\ \ \beta=\lim_{k \to \infty} \frac{1}{n_k}\log s_k =c+\varepsilon,
\]
it follows from the first statement of Theorem \ref{Alpha} that the Hausdorff dimension of $E(\{n_k\},\{s_k\},\{t_k\})$ is $\theta(b+c+2\varepsilon,c+\varepsilon)$. Thus $\dim_{\rm H}X(b,c) \geq \theta(b+c+2\varepsilon,c+\varepsilon)$. Letting $\varepsilon\to 0$, we have
$\dim_{\rm H}X(b,c) \geq \theta(b+c,c) = \Theta(b,c)$.

(iii) When $b=\infty$ or $c=\infty$, we prove that $\dim_{\rm H}X(b, c) \geq 1/2$. Since $X(\infty,\infty)$ is a subset of $X(b,c)$, it suffices to show $\dim_{\rm H}X(\infty, \infty) \geq 1/2$. Let $u_n=v_n :=e^{n^2}$ for all $n \in \mathbb N$. Then $F(\{u_n\},\{v_n\})$ in \eqref{Euv} is a subset of $X(\infty, \infty)$. We deduce from Lemma \ref{LRlemma} that
\begin{align*}
\dim_{\rm H} X(\infty,\infty) &\geq \dim_{\rm H}F(\{u_n\},\{v_n\})\\
&= \liminf_{n \to \infty}\frac{\sum^n_{k=1}k^2}{2\sum^{n}_{k=1}k^2 + (n+1)^2} =\frac{1}{2}.
\end{align*}
Hence $\dim_{\rm H}X(b, c) \geq \dim_{\rm H} X(\infty,\infty) \geq 1/2$.
\end{proof}

\section{Proof of the second statement of Theorem \ref{Alpha}}
In this section, we assume that $\alpha=0$. We are going to prove that
\[
\dim_{\rm H}E(\{n_k\},\{s_k\}, \{t_k\})=\dim_{\rm H} E_{L}(\{n_k\},\{s_k\}, \{t_k\})=1.
\]
Since $E(\{n_k\},\{s_k\}, \{t_k\})$ is a subset of $E_{L}(\{n_k\},\{s_k\}, \{t_k\})$, it is sufficient to show that $\dim_{\rm H}E(\{n_k\},\{s_k\}, \{t_k\})=1$.

Let $M>0$ be an integer, and let
\begin{equation}\label{Mpressuref}
\mathrm{P}_M(\theta):= \lim_{n \to \infty} \frac{1}{n}\log \sum_{1\leq a_1,\dots,a_n \leq M} q^{-2\theta}_n(a_1,\dots,a_n),\ \forall \theta >0.
\end{equation}
By \eqref{mgf}, the limit in \eqref{Mpressuref} exists, and so $\mathrm{P}_M(\theta)$ is well-defined. We call $\mathrm{P}_M(\theta)$ the restricted Diophantine pressure function of continued fractions. It will play the same role as the Diophantine pressure function $\mathrm{P}(\theta)$ in Section \ref{PFA}. To this end, we first list some properties of $\mathrm{P}_M(\theta)$. See \cite{Sar,PW,MU96} for more results of (restricted) Diophantine pressure function.

\begin{lemma}[\cite{Sar,MU96}]\label{PMbasic}
We have
\begin{enumerate}
  \item[(i)] For fixed $M\in\mathbb N$, $\mathrm{P}_M(\theta)$ is a decreasing Lipschitz function of $\theta$.
  \item[(ii)] For fixed $\theta>1/2$, $\mathrm{P}_M(\theta)$ increases to $\mathrm{P}(\theta)$ as $M \to \infty$. Denote by $\theta_M$ the unique zero point of $\mathrm{P}_M(\theta)$. As a consequence, $\theta_M$ increases to $1$ as $M \to \infty$.
\end{enumerate}
\end{lemma}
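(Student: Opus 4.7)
The plan is to prove (i) by a term-wise analysis of the finite partition sum
\[
S_n(\theta):=\sum_{1\leq a_1,\ldots,a_n\leq M} q_n^{-2\theta}(a_1,\ldots,a_n),
\]
and to deduce (ii) by monotone approximation of the Gauss infinite conformal iterated function system by its $M$-letter subsystems.

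For (i), the monotonicity in $\theta$ is immediate from the term-wise observation that every admissible $(a_1,\ldots,a_n)$ has $q_n\geq 1$, so each $q_n^{-2\theta}$ is non-increasing in $\theta$; hence $S_n$, $\frac{1}{n}\log S_n$, and the pointwise limit $\mathrm{P}_M$ are all non-increasing. For the Lipschitz constant, the recursion \eqref{pq} together with $a_i\leq M$ yields $q_n\leq(M+1)^n$, and therefore
\[
\left|\frac{d}{d\theta}\,\frac{1}{n}\log S_n(\theta)\right|=\frac{2}{n}\cdot\frac{\sum q_n^{-2\theta}\log q_n}{\sum q_n^{-2\theta}}\leq 2\log(M+1)
\]
uniformly in $n$ and $\theta$, and this uniform bound passes to the pointwise limit, so $\mathrm{P}_M$ is $2\log(M+1)$-Lipschitz. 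Strict monotonicity (needed so that the zero $\theta_M$ in (ii) is unique) comes from the presence of admissible words with $q_n>1$ (which exist for all $M\geq 2$) together with the real-analyticity of $\mathrm{P}_M$, a standard output of the transfer-operator formalism on the finite alphabet $\{1,\ldots,M\}$.

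For (ii), the inequality $\mathrm{P}_M(\theta)\leq\mathrm{P}_{M+1}(\theta)\leq\mathrm{P}(\theta)$ is immediate from enlarging the alphabet in the defining sum, so what remains is the convergence $\mathrm{P}_M(\theta)\uparrow\mathrm{P}(\theta)$ for every $\theta>1/2$. I would invoke this as the standard \emph{approximation by finite subsystems} of the Gauss infinite conformal IFS, as established in \cite{MU96} (see also \cite{Sar}). For the consequence $\theta_M\to 1$, first note that $\mathrm{P}(1)=0$: by \eqref{cylinder}, the sum $\sum_{a_1,\ldots,a_n}q_n^{-2}$ is comparable to $\sum|I_n(a_1,\ldots,a_n)|=1$, so $\frac{1}{n}\log\sum q_n^{-2}\to 0$. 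Hence $\mathrm{P}_M(1)\leq 0$, and the unique zero of the decreasing function $\mathrm{P}_M$ satisfies $\theta_M\leq 1$. Monotonicity in $M$ follows from $\mathrm{P}_{M+1}(\theta_M)\geq\mathrm{P}_M(\theta_M)=0=\mathrm{P}_{M+1}(\theta_{M+1})$ and the monotonicity of $\mathrm{P}_{M+1}$, so $\theta_M\nearrow\theta^*\leq 1$. If $\theta^*<1$, pick $\theta\in(\theta^*,1)$; strict decrease of $\mathrm{P}$ on $(1/2,\infty)$ gives $\mathrm{P}(\theta)>\mathrm{P}(1)=0$, so by the pointwise convergence $\mathrm{P}_M(\theta)>0$ for all large $M$, forcing $\theta_M\geq\theta>\theta^*$, a contradiction.

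The principal obstacle is the limit $\mathrm{P}_M(\theta)\uparrow\mathrm{P}(\theta)$ in (ii); this is not a matter of elementary tail estimates on the partition function, because both $\mathrm{P}$ and $\mathrm{P}_M$ are defined through the $n\to\infty$ exponential growth rate rather than through any single partition sum, so one genuinely needs the infinite-alphabet thermodynamic formalism of Mauldin--Urbanski. Everything else in the lemma is bookkeeping around this core approximation together with the standard strict decrease and analyticity of the restricted pressures $\mathrm{P}_M$.
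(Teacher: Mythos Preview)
Your proof is correct, but part (ii) takes a different route from the paper, and your assessment of the difficulty there is off.

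For (i), your argument and the paper's are essentially the same: both bound $q_n$ above by an exponential in $n$ (you use $q_n\leq(M+1)^n$ from the recursion, the paper uses $q_n<(2M)^n$ from \eqref{qnleq}) and convert this into a uniform Lipschitz bound on $\frac{1}{n}\log S_n(\theta)$. You differentiate; the paper compares $S_n(\theta)$ and $S_n(\theta+\delta)$ directly. No real difference.

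For (ii), however, you invoke the finite-subsystem approximation from \cite{MU96} as a black box and explicitly assert that ``this is not a matter of elementary tail estimates on the partition function.'' The paper contradicts this claim by giving precisely such an elementary argument: using Lemma~\ref{qnsep} to split off the digits exceeding $M$ from a general word $(a_1,\ldots,a_m)\in\mathbb{N}^m$, one obtains
\[
\sum_{(a_1,\ldots,a_m)\in\mathbb{N}^m} q_m^{-2\theta}\ \leq\ K\bigl(e^{\mathrm{P}_M(\theta)+\varepsilon}+\delta_M(\theta)\bigr)^m,\qquad \delta_M(\theta):=\sum_{j>M}(2/j)^{2\theta},
\]
via a binomial expansion over the number and positions of the large digits. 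Taking $\frac{1}{m}\log$ and letting $m\to\infty$, then $\varepsilon\to 0$, yields $\mathrm{P}(\theta)\leq\mathrm{P}_M(\theta)+\delta_M(\theta)$, and $\delta_M(\theta)\to 0$ since $\theta>1/2$. So the convergence $\mathrm{P}_M\uparrow\mathrm{P}$ does admit a self-contained combinatorial proof, and the paper supplies one. Your appeal to \cite{MU96} is certainly valid (the lemma is attributed to that reference), but it is not the only option, and the paper's direct argument has the advantage of keeping the exposition independent of the general infinite-IFS machinery.

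Your derivation of $\theta_M\nearrow 1$ from the pointwise convergence is fine and in fact more explicit than the paper, which simply records it as a consequence.
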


\begin{proof}
The proof is a consequence of Proposition 3.3 of \cite{MU96} and Theorem 2 of \cite{Sar} in the setting of iterated function systems. For the sake of completeness, we give direct proofs by means of the basic properties of continued fractions.

(i) Let $M \in \mathbb N$ be fixed. By definition, $\mathrm{P}_M(\theta)$ is a decreasing function of $\theta$. Now we are ready to prove that $\mathrm{P}_M(\theta)$ is Lipschitz continuous. Given $n \in \mathbb N$ and $a_1,\dots,a_n \in\{1,\dots, M\}$, for any $\delta>0$, it follows from \eqref{qnleq} that
\[
(2M)^{-2\delta n}<q^{-2\delta}_n(a_1,\dots,a_n) \leq 1,
\]
which implies that
\[
1\leq \frac{\sum_{1\leq a_1,\dots,a_n \leq M} q^{-2\theta}_n(a_1,\dots,a_n)}{\sum_{1\leq a_1,\dots,a_n \leq M} q^{-2(\theta+\delta)}_n(a_1,\dots,a_n)} <(2M)^{2\delta n}.
\]
Hence
\begin{align*}
&\left|\frac{1}{n}\log \sum_{1\leq a_1,\dots,a_n \leq M} q^{-2\theta}_n(a_1,\dots,a_n)- \frac{1}{n}\log \sum_{1\leq a_1,\dots,a_n \leq M} q^{-2(\theta+\delta)}_n(a_1,\dots,a_n)\right|\\
&\leq 2\delta \log(2M),
\end{align*}
which gives that $|\mathrm{P}_M(\theta)- \mathrm{P}_M(\theta+\delta)| \leq 2\delta \log(2M)$. That is, $\mathrm{P}_M(\theta)$ is a Lipschitz function.

(ii) Let $\theta>1/2$ be fixed. It is clear that $\mathrm{P}_M(\theta)$ is an increasing function of $M$. Fix $M\in \mathbb N_{\geq 2}$.
For any small $\varepsilon>0$, by the definition of $\mathrm{P}_M(\theta)$ in \eqref{Mpressuref}, there exists $K >0$ such that for all $m\in \mathbb N$,
\begin{equation}\label{lemMpre}
\sum_{1\leq a_1,\dots,a_m \leq M} q^{-2\theta}_m(a_1,\dots,a_m) \leq Ke^{(\mathrm{P}_M(\theta)+\varepsilon)m}.
\end{equation}
For $n \in \mathbb N$, let
\[
\delta_n(\theta):= \sum_{j>n} \left(\frac{2}{j}\right)^{2\theta}.
\]
Then $\delta_n(\theta) \to 0$ as $n \to \infty$. For $m \in \mathbb N$ and $(a_1,\dots,a_m) \in \mathbb N^m$ with $a_{j_1},\dots,a_{j_k}>M$ for some $0\leq k \leq m$ and $a_j \leq M$ for $j\neq j_1,\dots,j_k$, by Lemma \ref{qnsep}, we see that
\begin{equation}\label{qmsepji}
q_m(a_1,\dots,a_m) > \left(\prod^k_{i=1} \frac{a_{j_i}}{2}\right) q_{m-k}(\omega),
\end{equation}
where $\omega \in \mathbb N^{m-k}_{\leq M}$ is obtained by deleting $\{a_{j_i}\}_{1\leq i\leq k}$ from $(a_1,\dots,a_m)$. Note that the number of combinations of $k\ (0\leq k \leq m)$ numbers randomly selected from $m$ numbers is $\binom{m}{k}$, so it follows from \eqref{qmsepji} that
\begin{equation*}
\sum_{(a_1,\dots,a_m) \in \mathbb N^m} q^{-2\theta}_m(a_1,\dots,a_m) \leq  \prod^m_{k=0} \binom{m}{k} \delta^k_M(\theta)\sum_{\omega \in \mathbb N^{m-k}_{\leq M}}q^{-2\theta}_{m-k}(\omega).
\end{equation*}
Combining this with \eqref{lemMpre}, we conclude that
\begin{align*}
\sum_{(a_1,\dots,a_m) \in \mathbb N^m} q^{-2\theta}_m(a_1,\dots,a_m) &\leq  K\prod^m_{k=0} \binom{m}{k} \delta^k_M(\theta) e^{(\mathrm{P}_M(\theta)+\varepsilon)(m-k)}\\
&= K\left(e^{\mathrm{P}_M(\theta)+\varepsilon}+ \delta_{M}(\theta)\right)^m,
\end{align*}
which gives that $\mathrm{P}(\theta) \leq \log (e^{\mathrm{P}_M(\theta)+\varepsilon}+ \delta_{M}(\theta)) \leq \mathrm{P}_M(\theta) +\varepsilon+ \delta_{M}(\theta)$. Letting $\varepsilon \to 0$, we obtain
\[
\mathrm{P}_M(\theta) \leq \mathrm{P}(\theta) \leq \mathrm{P}_M(\theta) + \delta_{M}(\theta).
\]
Hence $\mathrm{P}_M(\theta) \to \mathrm{P}(\theta)$ as $M\to \infty$.
\end{proof}

We will use the method in Section \ref{PFA} to prove Theorem \ref{Alpha} for the case $\alpha=0$.
For $k,m \in \mathbb{N}$, let
\[
\mathcal{M}_m(k):=\left\{(\sigma_1,\ldots,\sigma_k)\in \{1,2,\dots,M\}^k: \frac{1}{2^{m}} \leq |I_k(\sigma_1,\ldots,\sigma_k)|<\frac{1}{2^{m-1}}\right\}.
\]
Denote by $M_m(k) := \sharp \mathcal{M}_m(k)$ the cardinality of $\mathcal{M}_m(k)$. Being similar to the proof of Lemma \ref{NP}, we have the following estimation of $M_m(k)$.

\begin{lemma}\label{MP}
Let $M \in \mathbb N_{\geq 2}$ be fixed. For any $0<\theta <\theta_M$ and for sufficiently large $k$, there exists $m \in \mathbb{N}$ such that
\begin{equation*}
M_m(k) > 2^{(m+1)\theta}.
\end{equation*}
\end{lemma}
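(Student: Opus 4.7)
The plan is to adapt the contradiction argument of Lemma \ref{NP} to the restricted setting. I would assume for contradiction that there exist $\theta_* \in (0,\theta_M)$ and a strictly increasing sequence $\{k_i\}$ of positive integers such that $M_m(k_i) \leq 2^{(m+1)\theta_*}$ holds for every $m \in \mathbb{N}$ and every $i$. Fix $\delta > 0$ small so that $\theta_* + \delta < \theta_M$; since $\mathrm{P}_M$ is decreasing with unique zero $\theta_M$ (Lemma \ref{PMbasic}), one has $\mathrm{P}_M(\theta_* + \delta) > 0$.

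Next, I would partition the sum
\[
\Sigma_i := \sum_{(a_1,\ldots,a_{k_i}) \in \{1,\ldots,M\}^{k_i}} \left|I_{k_i}(a_1,\ldots,a_{k_i})\right|^{\theta_*+\delta}
\]
according to which class $\mathcal{M}_m(k_i)$ the tuple belongs to. The range of $m$ actually contributing is bounded by $O(k_i)$ in view of the lower bound $|I_{k_i}| \geq (2\cdot 4^{k_i} M^{2k_i})^{-1}$ from \eqref{cylinder}. Inside each class, the contradiction hypothesis combined with $|I_{k_i}|^{\theta_*+\delta} < 2^{-(\theta_*+\delta)(m-1)}$ yields a bound $M_m(k_i)\cdot 2^{-(\theta_*+\delta)(m-1)} \leq 2^{(m+1)\theta_*}\cdot 2^{-(\theta_*+\delta)(m-1)}$. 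Summing the resulting geometric series in $m$ produces an upper bound $\tfrac{2^{2\theta_*+\delta}}{2^{\delta}-1}$ for $\Sigma_i$ that is independent of $k_i$.

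Taking $\log$, dividing by $k_i$, and letting $i\to\infty$, I would then invoke the two-sided comparison $\tfrac{1}{2^\theta} q_{k_i}^{-2\theta} \leq |I_{k_i}|^\theta \leq q_{k_i}^{-2\theta}$ from \eqref{cylinder} to identify the resulting limit with $\mathrm{P}_M(\theta_*+\delta)$, concluding $\mathrm{P}_M(\theta_*+\delta) \leq 0$. This contradicts the positivity noted in the first paragraph, proving the lemma. Structurally this is identical to Lemma \ref{NP}, but with a crucial simplification: the hypothesis $\theta < \theta_M$ directly supplies $\mathrm{P}_M(\theta) > 0$, so no extra exponential factor in $k$ is needed on the right-hand side. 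I foresee no significant obstacle beyond routine care with the finite effective range of $m$ in the geometric summation.
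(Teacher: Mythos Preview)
Your proposal is correct and essentially identical to the paper's own proof: both argue by contradiction, partition the sum $\sum_{(a_1,\ldots,a_{k_i})\in\{1,\ldots,M\}^{k_i}}|I_{k_i}|^{\theta_*+\delta}$ over the classes $\mathcal{M}_m(k_i)$, bound it by the convergent geometric series $\tfrac{2^{2\theta_*+\delta}}{2^\delta-1}$, and conclude $\mathrm{P}_M(\theta_*+\delta)\leq 0$, contradicting $\theta_*+\delta<\theta_M$. Your added remarks on the finite effective range of $m$ and the $|I_{k_i}|\asymp q_{k_i}^{-2}$ comparison are harmless embellishments; the paper simply sums over all $m\geq 1$ and leaves the cylinder--continuant comparison implicit.
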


\begin{proof}
We prove the lemma by contradiction. Fix $M \in \mathbb N_{\geq 2}$. Suppose that there exist $0<\theta_* <\theta_M$ and a strictly increasing sequence $\{k_i\}$ of positive integers such that for all $m \in \mathbb N$ and for all $i \in \mathbb N$,
\begin{equation*}
M_m(k_i) \leq 2^{(m+1)\theta_*}.
\end{equation*}
For any small $0<\delta<\theta_M-\theta_*$, we see that
\[
\sum_{1\leq a_1,\dots,a_{k_i} \leq M} |I_{k_i}(a_1,\dots,a_{k_i})|^{\theta+\delta} \leq \sum^\infty_{m=1} \frac{M_m(k_i)}{2^{(m-1)(\theta_*+\delta)}}\leq \sum^\infty_{m=1} \frac{2^{2\theta_*+\delta}}{2^{\delta m}} =\frac{2^{2\theta_*+\delta}}{2^\delta-1},
\]
which yields that $\mathrm{P}_{M}(\theta_*+\delta)=0$. This contradicts with the fact that ``$\theta_M$ is the unique zero point of $\mathrm{P}_{M}(\cdot)$" since $\theta_*+\delta<\theta_M$.
\end{proof}

For any $M\geq 2$, we have $\theta_M=\dim_{\rm H}E_M>1/2$. For any fixed $1/2<\theta <\theta_M$, by Lemma \ref{MP}, there exist positive integers $k_0$ and $m_0$ such that
\begin{equation}\label{PMn0k0}
M_{m_0}(k_0) > 2^{(m_0+1)\theta}.
\end{equation}
Replacing $\mathcal{N}_{m_0}(k_0)$ by $\mathcal{M}_{m_0}(k_0)$ in Section 3.2.1, and following Step I in Section 3.2.2, we can obtain a subset of $E(\{n_k\},\{s_k\}, \{t_k\})$:
\[
\mathrm{E}= \bigcap^\infty_{n=0}\mathrm{E}_n.
\]
Moreover, the estimates of $m_n$ and $\varepsilon_n$ in Step II of Section 3.2.3 are valid as long as we replace $N_{m_0}(k_0)$ by $M_{m_0}(k_0)$.
For the lower bound of $\dim_{\rm H}\mathrm{E}$, based on the estimates of $m_1\cdots m_{n-1}$ and $m_n\varepsilon_n$ in Step III of Section 3.2.3,
it suffices to calculate the following two liminfs:
\begin{equation}\label{Zeroequliminf1}
\liminf_{k \to \infty} \frac{\frac{n_k}{k_0}\log M_{m_0}(k_0) +\sum^{k-1}_{i=1}\log t_i}{\frac{n_k}{k_0}(m_0+1)\log 2 + 2\sum^{k}_{i=1}\log (s_i+t_i) -\log t_k}
\end{equation}
and
\begin{equation}\label{Zeroequliminfhk1}
\liminf_{k \to \infty} \frac{\frac{h_k}{k_0}\log M_{m_0}(k_0) +\sum^{k-1}_{i=1}\log t_i}{\frac{h_k}{k_0}(m_0+1)\log 2 + 2\sum^{k-1}_{i=1}\log (s_i+t_i)},
\end{equation}
where $n_{k-1} \leq h_k <n_k$. See \eqref{equliminf1} and \eqref{equliminfhk1} in Step III of Section 3.2.3. By the hypotheses (H2) and (H3), we deduce that
\[
\lim_{k \to \infty} \frac{\sum^{k-1}_{i=1}\log t_i}{n_k} = \lim_{k \to \infty} \frac{2\sum^{k}_{i=1}\log (s_i+t_i) -\log t_k}{n_k} =0
\]
and
\[
\lim_{k \to \infty} \frac{\sum^{k-1}_{i=1}\log t_i}{h_k} = \lim_{k \to \infty} \frac{\sum^{k-1}_{i=1}\log (s_i+t_i)}{h_k} =0.
\]
This means that the liminfs in \eqref{Zeroequliminf1} and \eqref{Zeroequliminfhk1} are the same, and are equal to
\[
\frac{\log M_{m_0}(k_0)}{(m_0+1)\log 2}.
\]
Combining this with \eqref{PMn0k0}, we see that $\dim_{\rm H} E(\{n_k\},\{s_k\}, \{t_k\})\geq \dim_{\rm H}\mathrm{E} >\theta$.
Since $\theta <\theta_M$ is arbitrary, we have $\dim_{\rm H} E(\{n_k\},\{s_k\}, \{t_k\})\geq \theta_M$. Letting $M \to \infty$, in view of Lemma \ref{PMbasic}, we obtain $$\dim_{\rm H} E(\{n_k\},\{s_k\}, \{t_k\})=1.$$
This completes the proof for the second statement of Theorem \ref{Alpha}.

\section{Proof of the third statement of Theorem \ref{Alpha}}

In this section, we assume that $\alpha=\infty$. We first give the proof for the Hausdorff dimension of $E(\{n_k\},\{s_k\}, \{t_k\})$ and then deal with the Hausdorff dimension of $E_{L}(\{n_k\},\{s_k\}, \{t_k\})$.

\subsection{Proof for the dimension of $E(\{n_k\},\{s_k\}, \{t_k\})$}
We will prove that
\begin{equation}\label{infinityequation}
  \dim_{\rm H} E(\{n_k\},\{s_k\}, \{t_k\}) = \frac{1}{2+\xi},
  \end{equation}
  where $\xi \in [0,\infty]$ is defined as
  \[
  \xi:= \limsup_{k \to \infty} \frac{\log s_{k+1}}{\log s_1+\cdots+\log s_k}.
  \]

For the upper bound of \eqref{infinityequation}, since
\[
 E(\{n_k\},\{s_k\}, \{t_k\}) = \bigcup^\infty_{K=1} \bigcup^\infty_{L=1}E_{K,L}(\{n_k\},\{s_k\}, \{t_k\}),
\]
where $E_{K,L}(\{n_k\},\{s_k\}, \{t_k\})$ is defined as
\[
\big\{x\in \mathbb{I}: 1\leq a_j(x) \leq L, \forall j\neq n_k(k \in \mathbb N); s_k < a_{n_k}(x) \leq s_k+ t_k \ \forall k\geq K\big\},
\]
we have
\begin{equation}\label{KLE}
\dim_{\rm H} E(\{n_k\},\{s_k\}, \{t_k\}) = \sup_{K,L}\Big\{\dim_{\rm H} E_{K,L}(\{n_k\},\{s_k\}, \{t_k\}) \Big\}.
\end{equation}
So it remains to calculate the Hausdorff dimension of $E_{K,L}(\{n_k\},\{s_k\}, \{t_k\})$ for all $K,L \in \mathbb N$. For any fixed $L\in \mathbb N$, we only consider the case where $K=1$, the proofs for other cases are similar.

For any $k \in \mathbb N$, let $\mathcal{C}_{n_k-1}$ be the set of sequences $(\sigma_1,\sigma_2,\dots,\sigma_{n_k-1})\in \mathbb N^{n_k-1}$ such that $1\leq \sigma_j \leq L$ for all $1\leq j \leq n_k-1$ with $j \neq n_1,\dots,n_{k-1}$ and $s_i < \sigma_{n_i}\leq s_i+ t_i$ for all $1\leq i\leq k-1$. For any $(\sigma_1,\sigma_2,\dots,\sigma_{n_k-1})\in \mathcal{C}_{n_k-1}$, let
\[
J_{n_k-1}(\sigma_1,\sigma_2,\dots,\sigma_{n_k-1}) := \bigcup_{s_k<j\leq s_k+t_k} I_{n_{k}} (\sigma_1,\sigma_2,\dots,\sigma_{n_k-1},j).
\]
Then $J_{n_k-1}(\sigma_1,\sigma_2,\dots,\sigma_{n_k-1})$ is an interval. Moreover,
\[
E_{1,L}(\{n_k\},\{s_k\}, \{t_k\})= \bigcap^\infty_{k=1} \bigcup_{(\sigma_1,\sigma_2,\dots,\sigma_{n_k-1})\in \mathcal{C}_{n_k-1}}J_{n_k-1}(\sigma_1,\sigma_2,\dots,\sigma_{n_k-1}).
\]
This means that for all $k \in \mathbb N$, $E_{1,L}(\{n_k\},\{s_k\}, \{t_k\})$ is covered by the family of intervals $J_{n_k-1}(\sigma_1,\sigma_2,\dots,\sigma_{n_k-1})$ for all $(\sigma_1,\sigma_2,\dots,\sigma_{n_k-1})\in \mathcal{C}_{n_k-1}$. Note that
\[
\sharp \mathcal{C}_{n_k-1} \leq L^{n_k-k} (t_1+1)\cdots(t_{k-1}+1)
\]
and
\begin{align*}
\left|J_{n_k-1}(\sigma_1,\sigma_2,\dots,\sigma_{n_k-1})\right| &=\sum_{s_k<j\leq s_k+t_k} \left|I_{n_{k}} (\sigma_1,\sigma_2,\dots,\sigma_{n_k-1},j)\right|\\
&\leq 2\left|I_{n_{k}-1} (\sigma_1,\sigma_2,\dots,\sigma_{n_k-1})\right| \cdot \sum_{s_k<j\leq s_k+t_k} \frac{1}{j(j+1)}\\
& < \frac{2}{(s_1s_2 \cdots s_{k-1})^2s_k},
\end{align*}
we conclude from the hypotheses (H2) and (H3), and Lemma \ref{upp} that
\begin{align*}
\dim_{\rm H} E_{1,L}(\{n_k\},\{s_k\},\{t_k\}) &\leq \liminf_{k \to \infty} \frac{(n_k-k)\log L +\sum^{k-1}_{j=1}\log(t_j+1)}{2\sum^{k-1}_{j=1}\log s_j -\log 2+ \log s_{k}}\\
&\leq \liminf_{k \to \infty} \frac{\sum^{k-1}_{j=1}\log(t_j+1)}{2\sum^{k-1}_{j=1}\log s_j+\log s_{k}}\\
&=\liminf_{k \to \infty} \frac{\sum^{k-1}_{j=1}s_j}{2\sum^{k-1}_{j=1}\log s_j+\log s_{k}}= \frac{1}{2+\xi}.
\end{align*}
Similarly, $\dim_{\rm H} E_{K,L}(\{n_k\},\{s_k\},\{t_k\}) \leq 1/(2+\xi)$ for all $K,L \in \mathbb N$. Combining this with \eqref{KLE}, we derive that
\[
\dim_{\rm H} E(\{n_k\},\{s_k\},\{t_k\}) \leq \frac{1}{2+\xi}.
\]

For the lower bound of \eqref{infinityequation}, we follow the method used in Section 3.2.

\begin{lemma}\label{NmkInfinity}
For any $1/2<\theta<1$ and sufficiently large $k$, there exists $m \in \mathbb{N}$ such that
\begin{equation*}
N_m(k) > 2^{(m+1)\theta}.
\end{equation*}
\end{lemma}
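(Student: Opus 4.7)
The plan is to derive this lemma as an immediate consequence of Lemma \ref{NP}, which already provides a strictly stronger counting bound for $N_m(k)$. The key observation is that the Diophantine pressure function is strictly positive throughout $(1/2, 1)$: indeed, $\mathrm{P}(\theta)$ is continuous and strictly decreasing on $(1/2, \infty)$, has a singularity at $1/2$, and vanishes at $\theta = 1$ because the full Gauss system has Hausdorff dimension one. Consequently, for any fixed $\theta \in (1/2, 1)$, one may select some $\varepsilon$ satisfying $0 < \varepsilon < \mathrm{P}(\theta)$.

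Applying Lemma \ref{NP} to this pair $(\theta, \varepsilon)$, for all sufficiently large $k$ there exists $m \in \mathbb{N}$ such that
$$N_m(k) > 2^{(m+1)\theta} \cdot e^{(\mathrm{P}(\theta) - \varepsilon) k}.$$
Since $\mathrm{P}(\theta) - \varepsilon > 0$, the exponential factor is strictly greater than $1$ for every $k \geq 1$, and the desired inequality $N_m(k) > 2^{(m+1)\theta}$ follows at once.

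There is no genuine obstacle here; the lemma is isolated as a separate statement only because it is the exact form needed later in the $\alpha = \infty$ case of Theorem \ref{Alpha}, where subsets of $E(\{n_k\},\{s_k\},\{t_k\})$ are constructed using dyadic-scale blocks of fixed word length $k_0$, and the extra pressure-growth refinement supplied by Lemma \ref{NP} is not needed. Alternatively, should one prefer a self-contained argument, the proof by contradiction used for Lemma \ref{NP} adapts verbatim: assuming $N_m(k_i) \leq 2^{(m+1)\theta_*}$ for all $m$ along a subsequence $k_i \to \infty$, summing $|I_{k_i}(a_1,\ldots,a_{k_i})|^{\theta_*+\delta}$ by grouping cylinders according to their dyadic length bracket yields a uniformly bounded sum in $k_i$, which forces $\mathrm{P}(\theta_* + \delta) \leq 0$ for small $\delta > 0$, contradicting the positivity of $\mathrm{P}$ on $(1/2, 1)$ once $\delta$ is chosen with $\theta_* + \delta < 1$.
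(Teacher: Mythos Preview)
Your proposal is correct. The paper itself omits the proof, merely noting that it is similar to Lemmas~\ref{NP} and~\ref{MP}; your direct derivation from Lemma~\ref{NP} (using $\mathrm{P}(\theta)>0$ on $(1/2,1)$ to absorb the factor $e^{(\mathrm{P}(\theta)-\varepsilon)k}$) is a clean shortcut, and your sketched contradiction argument is exactly the adaptation the paper has in mind.
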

\begin{proof}
The proof is similar to that of Lemmas \ref{NP} and \ref{MP}, so we omit it.
\end{proof}

Let $1/2<\theta_0<1$ be fixed. By Lemma \ref{NmkInfinity}, there exist positive integers $k_0$ and $m_0$ such that
\begin{equation*}\label{InfinityNmk}
N_{m_0}(k_0) > 2^{(m_0+1)\theta_0}.
\end{equation*}
Denote
\[
\widetilde{U}:= \frac{1}{k_0}\log N_{m_0}(k_0)\ \ \ \text{and}\ \ \ \widetilde{V}:=  \frac{m_0+1}{2k_0}\log 2.
\]
Then $\widetilde{U}>2\widetilde{V}\theta_0>\widetilde{V}$, and so the map
\begin{equation}\label{mapUV}
\gamma \mapsto \frac{\widetilde{U}+\gamma}{2\widetilde{V}+2\gamma}= \frac{1}{2}+ \frac{\widetilde{U}-\widetilde{V}}{2\widetilde{V}+2\gamma}
\end{equation}
is decreasing on $(0,\infty)$. Following the Steps I-III in Section 3.2, we obtain a subset of $E(\{n_k\},\{s_k\},\{t_k\})$ whose Hausdorff dimension is not less than the minimum of the liminfs
\begin{equation}\label{Infinityequliminf1}
\liminf_{k \to \infty} \frac{\frac{n_k}{k_0}\log N_{m_0}(k_0) +\sum^{k-1}_{i=1}\log t_i}{\frac{n_k}{k_0}(m_0+1)\log 2 + 2\sum^{k}_{i=1}\log (s_i+t_i) -\log t_k}
\end{equation}
and
\begin{equation}\label{Infinityequliminfhk1}
\liminf_{k \to \infty} \frac{\frac{h_k}{k_0}\log N_{m_0}(k_0) +\sum^{k-1}_{i=1}\log t_i}{\frac{h_k}{k_0}(m_0+1)\log 2 + 2\sum^{k-1}_{i=1}\log (s_i+t_i)},
\end{equation}
where $n_{k-1} \leq h_k <n_k$.

By the hypotheses (H2) and (H3), we deduce that
\[
\lim_{k \to \infty} \frac{\sum^{k-1}_{i=1}\log s_i}{\sum^{k-1}_{i=1}\log t_i} = \lim_{k \to \infty} \frac{ 2\sum^{k-1}_{i=1}\log s_i +\log s_k}{ 2\sum^{k}_{i=1}\log (s_i+t_i) -\log t_k} =1
\]
and
\[
 \lim_{k \to \infty}\frac{\sum^{k}_{i=1}\log t_i}{n_{k}}=\lim_{k \to \infty}\frac{\sum^{k}_{i=1}\log (s_i+t_i)}{n_{k}}=\lim_{k \to \infty}\frac{2\sum^{k}_{i=1}\log (s_i+t_i) -\log t_k}{n_k}=\infty.
\]
For \eqref{Infinityequliminf1}, it is equal to
\[
\lim_{k \to \infty}\frac{\sum^{k-1}_{i=1}\log t_i}{2\sum^{k}_{i=1}\log (s_i+t_i) -\log t_k}=\lim_{k \to \infty}\frac{\sum^{k-1}_{i=1}\log s_i}{2\sum^{k-1}_{i=1}\log s_i +\log s_k} = \frac{1}{2+\xi}.
\]
For \eqref{Infinityequliminfhk1}, since the map in \eqref{mapUV} is decreasing, it is not less than
\[
\liminf_{k \to \infty} \frac{\widetilde{U} +\frac{\sum^{k-1}_{i=1}\log t_i}{n_{k-1}}}{2\widetilde{V} + 2\cdot \frac{\sum^{k-1}_{i=1}\log (s_i+t_i)}{n_{k-1}}} =\frac{1}{2} \geq \frac{1}{2+\xi}.
\]
Therefore,
\[
\dim_{\rm H} E(\{n_k\},\{s_k\},\{t_k\}) \geq \frac{1}{2+\xi}.
\]

\subsection{Proof for the dimension of $E_{L}(\{n_k\},\{s_k\}, \{t_k\})$}
We will prove that
    \begin{equation}\label{gammaplus1}
  \dim_{\rm H} E_{L}(\{n_k\},\{s_k\}, \{t_k\})=\frac{1}{\gamma+1},
  \end{equation}
  where $\gamma \in [1,\infty]$ is defined by
\[
\gamma:=\limsup_{k\to \infty}\exp\left(\frac{\log\log s_k}{n_k}\right).
\]

For the upper bound of \eqref{gammaplus1}, there are three cases to discuss:
\begin{itemize}
  \item when $\gamma=1$, by the definitions of $\alpha$ in \eqref{important} and $X(b,c)$ in subsection 3.1, we deduce that $E_{L}(\{n_k\},\{s_k\}, \{t_k\})$ is a subset of $X(\infty,0)$. So it follows from Theorem \ref{Xbc} that
\[
\dim_{\rm H} E_{L}(\{n_k\},\{s_k\}, \{t_k\}) \leq\frac{1}{2} = \frac{1}{\gamma+1};
\]
  \item when $\gamma \in (1,\infty)$, according to the definition of $\gamma$, for any $\varepsilon<\gamma-1$, we see that $s_k >\exp((\gamma-\varepsilon)^{n_k})$ for infinitely many $k \in \mathbb N$. This implies that $E_{L}(\{n_k\},\{s_k\}, \{t_k\})$ is a subset of
$A(\psi)$ with $\psi(n)=\exp((\gamma-\varepsilon)^{n})$. In view of Theorem \ref{HDApsi}, we derive that
\[
\dim_{\rm H} E_{L}(\{n_k\},\{s_k\}, \{t_k\}) \leq\frac{1}{\gamma-\varepsilon +1}.
\]
Letting $\varepsilon \to 0$, we have $\dim_{\rm H} E_{L}(\{n_k\},\{s_k\}, \{t_k\}) \leq 1/(\gamma+1)$;
  \item  when $\gamma=\infty$, for any $G>1$,
$s_k >\exp(G^{n_k})$ for infinitely many $k \in \mathbb N$. Then $E_{L}(\{n_k\},\{s_k\}, \{t_k\})$ is a subset of
$A(\psi)$ with $\psi(n)=\exp(G^{n})$, and so
\[
\dim_{\rm H} E_{L}(\{n_k\},\{s_k\}, \{t_k\}) \leq\frac{1}{G+1}.
\]
by Theorem \ref{HDApsi}. Letting $G \to \infty$, we get that $\dim_{\rm H} E_{L}(\{n_k\},\{s_k\}, \{t_k\})=0$.
\end{itemize}
Therefore, in all these three cases, we have
\[
  \dim_{\rm H} E_{L}(\{n_k\},\{s_k\}, \{t_k\}) \leq\frac{1}{\gamma+1}.
\]

\medskip

For the lower bound of \eqref{gammaplus1}, without loss of generality, we can assume that $1\leq \gamma <\infty$. For any $\varepsilon>0$, we define the sequences $\{\widetilde{s}(n)\}$ and $\{\widetilde{t}(n)\}$ of positive numbers inductively as follows.
\begin{itemize}
\item Let $\widetilde{s}(1):=2$ and $\widetilde{t}(1):=2$ when $n_1 \neq 1$.
  \item When $n=n_k$ for some $k \in \mathbb N$, let
  \[
  \widetilde{s}(n):=s_k \ \ \ \text{and} \ \ \ \widetilde{t}(n):=t_k.
  \]
  \item When $n\neq n_k$ for all $k \in \mathbb N$, let
  \[
  \widetilde{s}(n):=\left(\prod_{j<n}\widetilde{s}(j) \right)^{\gamma+\varepsilon-1}\ \ \text{and} \ \ \ \widetilde{t}(n):=\left(\prod_{j<n}\widetilde{t}(j) \right)^{\gamma+\varepsilon-1}.
  \]
  In this case,
  \begin{equation}\label{gammaepsilon}
  \sum_{j\leq n}\log\widetilde{s}(j) = (\gamma+\varepsilon)\sum_{j< n}\log\widetilde{s}(j).
  \end{equation}
  Let $k(n):=\sharp\{k\geq 1: n_k \leq n\}$. By (H1), we deduce that $k(n)/n \to 0$ as $n \to \infty$. Combining this with \eqref{gammaepsilon}, we see that
   \begin{equation}\label{stilde}
  \sum_{j\leq n}\log\widetilde{s}(j) \geq (\gamma+\varepsilon)^{n-k(n)} \geq \left(\gamma+\frac{\varepsilon}{2}\right)^{n}
  \end{equation}
  for sufficiently large $n \in \mathbb N$.
\end{itemize}
Then $\widetilde{s}(n),\widetilde{t}(n) \to \infty$ as $n \to \infty$.
Let $F(\{\widetilde{s}(n)\}, \{\widetilde{t}(n)\})$ be the resulting set defined in \eqref{Euv}. Then it is a subset of $E_{L}(\{n_k\},\{s_k\}, \{t_k\})$.
For any $n\in \mathbb N$, let
\[
\widetilde{\mathcal{C}}_n:=\left\{(\sigma_1,\sigma_2,\dots,\sigma_n)\in \mathbb N^n: \widetilde{s}(m)<\sigma_m\leq \widetilde{s}(m)+\widetilde{t}(m), \forall m=1,2,\dots,n\right\}.
\]
For any $n \in \mathbb N$ and $(\sigma_1,\sigma_2,\dots,\sigma_n) \in \widetilde{\mathcal{C}}_n$, define the basic interval of order $n$ as
\[
\widetilde{J}_n(\sigma_1,\sigma_2,\dots,\sigma_n):= \bigcup^\infty_{j=2}cl\left(I_{n+1}(\sigma_1,\sigma_2,\dots,\sigma_n,j)\right),
\]
and let
\[
\widetilde{\mathrm{E}}_n:= \bigcup_{(\sigma_1,\sigma_2,\dots,\sigma_n) \in \widetilde{\mathcal{C}}_n}\widetilde{J}_n(\sigma_1,\sigma_2,\dots,\sigma_n).
\]
Then $\{\widetilde{\mathrm{E}}_n\}$ is a decreasing sequence of subsets of $\widetilde{\mathrm{E}}_0:=[0,1]$. Moreover,
\begin{equation*}
\widetilde{\mathrm{E}}:= \bigcap^\infty_{n=0}\widetilde{\mathrm{E}}_n
\end{equation*}
is the set $F(\{\widetilde{s}(n)\}, \{\widetilde{t}(n)\})$. Hence $\widetilde{\mathrm{E}}$ is a subset of $E_{L}(\{n_k\},\{s_k\}, \{t_k\})$, and so
\begin{equation*}
\dim_{\rm H} E_{L}(\{n_k\},\{s_k\}, \{t_k\}) \geq \dim_{\rm H} \widetilde{\mathrm{E}}.
\end{equation*}

We will use Lemma \ref{lower} to prove the lower bound of $\dim_{\rm H} \widetilde{\mathrm{E}}$. To this end, we need to estimate the values of $m_n$ and $\varepsilon_n$ given in Lemma \ref{lower}. By the definition of $\widetilde{\mathrm{E}}_n$, we see that each basic interval of order $n-1$  contains
\begin{equation}\label{stildemn}
m_n= \lfloor\widetilde{s}(m)+\widetilde{t}(m)\rfloor - \lfloor\widetilde{s}(m) \rfloor \geq \lfloor\widetilde{t}(m) \rfloor
\end{equation}
basic intervals of order $n$. Moreover, these basic intervals of order $n$ are separated by gaps of at least
\begin{equation}\label{stildeen}
\varepsilon_n = \frac{1}{4^{n+2}(\widetilde{s}(1)+\widetilde{t}(1))^2\cdots(\widetilde{s}(n)+\widetilde{t}(n))^2}.
\end{equation}

Now we are ready to calculate the lininf
\begin{equation}\label{stildeliminf}
\liminf_{n \to \infty} \frac{\log (m_1m_2\cdots m_{n-1})}{-\log (m_n\varepsilon_n)}.
\end{equation}
By the definitions of $\widetilde{s}(n)$ and $\widetilde{t}(n)$,
\begin{equation*}\label{sttilde}
\lim_{n \to \infty} \frac{\log\widetilde{s}(n)}{\log\widetilde{t}(n)} = \lim_{n \to \infty} \frac{\log(\widetilde{s}(n)+\widetilde{t}(n))}{\log\widetilde{t}(n)}=1.
\end{equation*}
Combining this with \eqref{stildemn} and \eqref{stildeen}, we derive that the liminf in \eqref{stildeliminf} is equal to
\begin{equation}\label{stildeliminf2}
\liminf_{n \to \infty} \frac{\sum^{n-1}_{j=1}\log\widetilde{s}(j)}{\log\widetilde{s}(n) +2\sum^{n-1}_{j=1}\log\widetilde{s}(j)}.
\end{equation}
When $n=n_k$ for some $k\in \mathbb N$, for any $\varepsilon>0$, according to the definition of $\gamma$,
\begin{equation*}
\log \widetilde{s}(n_k) \leq \left(\gamma+\frac{\varepsilon}{3}\right)^{n_k-1}
\end{equation*}
for sufficiently large $k \in \mathbb N$. Combining this with \eqref{stilde}, we deduce that
\begin{equation*}
\lim_{k\to \infty}\frac{\log \widetilde{s}(n_k)}{\sum_{j\leq n_k-1}\log\widetilde{s}(j)} =0.
\end{equation*}
This implies that the liminf in \eqref{stildeliminf2} along the subsequence $\{n_k\}$ is equal to $1/2$. When $n\neq n_k$ for all $k\in \mathbb N$, it follows from the definition of $\widetilde{s}(n)$ that
\[
\log\widetilde{s}(n) = (\gamma+\varepsilon-1)\sum_{j\leq n-1}\log\widetilde{s}(j).
\]
In this case, the liminf in \eqref{stildeliminf2} is equal to $1/(\gamma+\varepsilon+1)$. Therefore,
\begin{equation*}
\dim_{\rm H} E_{L}(\{n_k\},\{s_k\}, \{t_k\}) \geq \dim_{\rm H} \widetilde{\mathrm{E}} \geq \frac{1}{\gamma+\varepsilon+1}.
\end{equation*}
Letting $\varepsilon \to 0$, we obtain
\[
\dim_{\rm H} E_{L}(\{n_k\},\{s_k\}, \{t_k\}) \geq \frac{1}{\gamma+1}.
\]
Therefore, we complete the proof of \eqref{gammaplus1} on the dimension of $E_{L}(\{n_k\},\{s_k\}, \{t_k\})$.

\section{Proofs of applications}

\subsection{Limsup set of coefficients}
Let $\mathcal{N}\subseteq \mathbb N$ be an infinite set. For any $B>1$, let
 \[
 A(B,\mathcal{N}):=\left\{x\in \mathbb{I}:a_n(x) \geq B^n\;\text{for infinitely many\;$n \in \mathcal{N}$}\right\}.
 \]
Then $A(B)=  A(B,\mathbb N)$, and so Theorem \ref{ABN} is a consequence of the following result.

\begin{theorem}\label{ABhuaN}
Let $\mathcal{N}\subseteq \mathbb N$ be an infinite set. For any $B>1$, we have
\[
\dim_{\rm H} A(B,\mathcal{N}) = \theta(\log B).
\]
 \end{theorem}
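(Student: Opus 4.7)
My plan is to establish Theorem \ref{ABhuaN} by matching upper and lower bounds, with the upper bound coming from inclusion into a set of the form $X(b,c)$ to which Theorem \ref{Xbc} applies, and the lower bound from constructing a concrete subset of the form $E(\{n_k\},\{s_k\},\{t_k\})$ and invoking the first statement of Theorem \ref{Alpha}.

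For the upper bound, I first reduce to the case $\mathcal{N}=\mathbb{N}$ by the trivial inclusion $A(B,\mathcal{N})\subseteq A(B)$. The key observation is that $A(B)\subseteq X(0,\log B)$: for every $m\geq 0$ the condition $\Pi_n^{(m)}(x)\geq e^{0\cdot n}=1$ in the definition of $X_m(0,\log B)$ is vacuous (being the empty-product convention), so $X(0,\log B)$ is simply the set of $x$ with $a_{n+1}(x)\geq e^{n\log B}$ for infinitely many $n$. Any $x\in A(B)$ satisfies $a_n(x)\geq B^n>B^{n-1}=e^{(n-1)\log B}$ for infinitely many $n$, so after relabeling $x\in X(0,\log B)$. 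Theorem \ref{Xbc} with $b=0$, $c=\log B>0$ then gives $\dim_{\rm H}A(B)\leq\Theta(0,\log B)$, where $\Theta(0,\log B)$ solves $\mathrm{P}(\theta)=0\cdot(2\theta-1)+\theta\log B=\theta\log B$; this is exactly $\theta(\log B)$.

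For the lower bound, I would select a sparse subsequence $\{n_k\}\subseteq\mathcal{N}$ recursively so that $n_k>k\cdot(n_1+n_2+\cdots+n_{k-1})$ for every $k\in\mathbb{N}$, which is possible because $\mathcal{N}$ is infinite. Set $s_k=t_k:=B^{n_k}$. By construction $E(\{n_k\},\{s_k\},\{t_k\})\subseteq A(B,\mathcal{N})$, since any point in this set has $a_{n_k}(x)>B^{n_k}$ for all large $k$ and $n_k\in\mathcal{N}$. Hypothesis (H1) follows from $n_k/k\geq n_{k-1}+\cdots+n_1\to\infty$; (H2) is trivial with $s_k=t_k$; and the sparseness yields
\[
\beta=\lim_{k\to\infty}\frac{n_k\log B}{n_k}=\log B,\qquad
\alpha=\lim_{k\to\infty}\frac{\log B\sum_{j=1}^{k}n_j}{n_k}=\log B,
\]
since $n_1+\cdots+n_{k-1}=o(n_k)$. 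Since $\alpha=\beta=\log B\in(0,\infty)$, the first statement of Theorem \ref{Alpha} yields
\[
\dim_{\rm H}A(B,\mathcal{N})\geq\dim_{\rm H}E(\{n_k\},\{s_k\},\{t_k\})=\theta(\log B,\log B)=\theta(\log B).
\]

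This argument is conceptually clean and I do not anticipate a serious obstacle: both directions reduce to crisp applications of previously established results. The one point worth double-checking is the inclusion $A(B)\subseteq X(0,\log B)$ and the identification $\Theta(0,\log B)=\theta(\log B)$, but both are immediate from unpacking the definitions of $X_m$ and the pressure equation. Notably, the flexibility provided by working with $\mathcal{N}$ rather than $\mathbb{N}$ costs nothing in the lower bound (we just pick the sparse subsequence inside $\mathcal{N}$) and nothing in the upper bound (monotonicity of Hausdorff dimension).
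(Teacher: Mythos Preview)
Your proposal is correct and follows essentially the same approach as the paper: the upper bound via $A(B,\mathcal{N})\subseteq A(B)\subseteq X(0,\log B)$ together with Theorem~\ref{Xbc}, and the lower bound by selecting a sparse subsequence $\{n_k\}\subseteq\mathcal{N}$ with $s_k=t_k=B^{n_k}$ so that $\alpha=\beta=\log B$ and Theorem~\ref{Alpha}(i) applies. The only cosmetic difference is the indexing of the sparseness condition ($n_k>k(n_1+\cdots+n_{k-1})$ versus the paper's $n_{k+1}>k(n_1+\cdots+n_k)$), which is immaterial.
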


\begin{proof}
Note that $A(B,\mathcal{N}) \subseteq A(B)$, so it is sufficient to compute the lower bound of $\dim_{\rm H} A(B,\mathcal{N})$ and the upper bound of $\dim_{\rm H} A(B)$.

For the upper bound of $\dim_{\rm H} A(B)$, it is clear that $A(B)$ is contained in the set $X(0,\log B)$. By Theorem \ref{Xbc}, we see that
\[
\dim_{\rm H} A(B) \leq \dim_{\rm H} X(0,\log B) = \Theta(0,\log B),
\]
where $\Theta(0,\log B)$ is the unique solution of $\mathrm{P}(\theta)=\theta\log B$. Hence $\Theta(0,\log B) = \theta(\log B)$, and so $\dim_{\rm H} A(B) \leq \theta(\log B)$.

For the lower bound of $\dim_{\rm H} A(B, \mathcal{N})$, choose a sequence $\{n_k\}$ from $\mathcal{N}$ such that $n_{k+1} >k(n_1+ \dots+n_{k})$ for all $k \in \mathbb N$. Let $s_k=t_k:=B^{n_k}$ for all $k \in \mathbb N$. Then
\[
\alpha=\lim_{k \to \infty} \frac{1}{n_k}\sum^k_{j=1}\log s_j = \log B \ \ \text{and}\ \ \beta= \lim_{k \to \infty}\frac{\log s_k}{n_k}=\log B.
\]
In this case, $E(\{n_k\},\{s_k\},\{t_k\})$ is a subset of $A(B, \mathcal{N})$. In view of Theorem \ref{Alpha}, we deduce that $\dim_{\rm H}A(B, \mathcal{N}) \geq \theta(\log B)$.

 Therefore,
 \[
 \dim_{\rm H} A(B,\mathcal{N}) = \theta(\log B).
 \]
\end{proof}

We are now in a position to prove Theorem \ref{HDApsi}. The proof is inspired by the method of Wang and Wu \cite[Theorem 4.2]{WW}. For any $b,c>1$, recall that
\[
A(b,c):=\left\{x\in \mathbb{I}: a_n(x) \geq b^{c^n}\ \text{for infinitely many $n \in \mathbb{N}$}\right\}
\]
and
\[
\widehat{A}(b,c):=\left\{x\in \mathbb{I}: a_n(x) \geq b^{c^n}\ \text{for sufficiently large $n \in \mathbb{N}$}\right\}.
\]
It was proved by {\L}uczak \cite{Luc}, and Feng et al. \cite{FWLT} that
\begin{equation}\label{LFWLTApplication}
\dim_{\rm H}\widehat{A}(b,c)= \dim_{\rm H}A(b,c)=\frac{1}{c+1}.
\end{equation}

\begin{proof}[Proof of Theorem \ref{HDApsi}]

(i) When $B_\psi=1$, for any $\varepsilon>0$, by the definition of $B_\psi$ in \eqref{DefBandb}, we see that $\psi(n) \leq (1+\varepsilon)^n$ for infinitely many $n \in \mathbb N$. Let
\[
\mathcal{N}:=\left\{n\in \mathbb{N}: \psi(n) \leq (1+\varepsilon)^n\right\}.
\]
Then $\mathcal{N} \subseteq \mathbb N$ is an infinite set. Moreover, $A(1+\varepsilon,\mathcal{N})$ is a subset of $A(\psi)$. In view of Theorem \ref{ABhuaN}, we deduce that
\[
\dim_{\rm H}A(\psi) \geq  \dim_{\rm H}A(1+\varepsilon,\mathcal{N}) = \theta(\log(1+\varepsilon)).
\]
Letting $\varepsilon \to 0$, we see that $\theta(\log(1+\varepsilon)) \to 1$, and so $\dim_{\rm H}A(\psi) =1$.

(ii) When $1< B_\psi<\infty$, for any $0<\varepsilon<B_\psi-1$, we derive that $\psi(n) \leq (B_\psi+\varepsilon)^n$ for infinitely many $n \in \mathbb N$, and $\psi(n) \geq (B_\psi-\varepsilon)^n$ for all large $n \in \mathbb N$. Let
\[
\mathcal{N}:=\left\{n\in \mathbb{N}: \psi(n) \leq (B_\psi+\varepsilon)^n\right\}.
\]
Then $A(B_\psi+\varepsilon, \mathcal{N})\subseteq  A(\psi) \subseteq A(B_\psi-\varepsilon, \mathcal{N})$. It follows from Theorem \ref{ABhuaN} that
\[
\theta(\log(B_\psi+\varepsilon)) \leq \dim_{\rm H}A(\psi) \leq \theta(\log(B_\psi-\varepsilon)).
\]
Letting $\varepsilon \to 0$, we obtain $\dim_{\rm H}A(\psi) = \theta(\log B_\psi)$.

(iii) When $B_\psi=\infty$, for any large $G>0$, we have $\psi(n) \geq G^n$ for all large $n \in \mathbb N$. Then $A(\psi)$ is a subset of $A(G)$. By Theorem \ref{ABhuaN}, we obtain $\dim_{\rm H} A(\psi) \leq \theta(\log G)$. Letting $G\to \infty$, we get that $\dim_{\rm H} A(\psi) \leq 1/2$.

(iii-1) When $b_\psi=1$, for any $\varepsilon >0$, we know that $\psi(n) \leq e^{(1+\varepsilon)^n}$ for infinitely many $n\in \mathbb N$. Note that $\widehat{A}(e,1+\varepsilon) \subseteq A(\psi)$, we deduce from \eqref{LFWLTApplication} that
\[
\dim_{\rm H} A(\psi) \geq  \frac{1}{2+\varepsilon}.
\]
Letting $\varepsilon \to 0$, we obtain $\dim_{\rm H} A(\psi) \geq 1/2$. Hence $\dim_{\rm H} A(\psi) = 1/2$.

(iii-2) When $1<b_\psi<\infty$, for any $0<\varepsilon <b_\psi-1$, we derive that $\psi(n) \leq e^{(b_\psi+\varepsilon)^n}$ for infinitely many $n \in \mathbb N$, and $\psi(n) \geq e^{(b_\psi-\varepsilon)^n}$ for all large $n\in \mathbb N$. Then
\[
\widehat{A}(e, b_\psi+\varepsilon) \subseteq A(\psi) \subseteq A(e, b_\psi-\varepsilon),
\]
which together with \eqref{LFWLTApplication} gives that
\[
\frac{1}{b_\psi+\varepsilon +1} \leq  \dim_{\rm H}A(\psi)\leq \frac{1}{b_\psi-\varepsilon +1}.
\]
Letting $\varepsilon \to 0$, we have $\dim_{\rm H} M(\psi)=1/(b_\psi+1)$.

(iii-3) When $b_\psi=\infty$, for any large $G>1$, we have $\psi(n) \geq e^{G^n}$ for all large $n \in \mathbb N$. Then $A(\psi)$ is a subset of $A(e,G)$, and so $\dim_{\rm H} A(\psi) \leq 1/(G+1)$. Letting $G\to \infty$, we see that $\dim_{\rm H} A(\psi)=0$.
\end{proof}

\subsection{Limit behaviours of the sum of coefficients}
Let $\varphi: \mathbb{R}^+ \to \mathbb R^+$ be an increasing function with $\varphi(n)/n \to \infty$ as $n \to \infty$.

\begin{proof}[Proof of Theorem \ref{AM}]
For any $n \in \mathbb N$, let $\phi(n):=\log\varphi(n)$. Then $\phi:\mathbb{N}\to\mathbb{R}^+$ is an increasing function and $\phi(n)/\sqrt{n} \to 0$ as $n \to \infty$. Define $t: \mathbb{N}\to\mathbb{R}^+$ by
\[
t(k):= \min\left\{\frac{\sqrt{n}}{\phi(n)}: n\geq k\right\},\ \ \forall k \in \mathbb N.
\]
Then
\begin{itemize}
  \item [(i)] $t(k)$ is increasing to infinity as $k$ tends to infinity;
  \item [(ii)] $\phi(k)\leq \frac{\sqrt{k}}{t(k)}$ for all $k \in \mathbb N$;
  \item [(iii)] $\frac{t(k)}{\sqrt{k}}$ is decreasing to zero as $k$ tends to infinity.
\end{itemize}
The first two facts come directly from the definition of $t(k)$. To see the last one, note that $t(k)=\min\{\sqrt{k}/\varphi(k), t(k+1)\}$ and $\phi$ is increasing, we deduce from (ii) that
\begin{align*}
\frac{\frac{t(k)}{\sqrt{k}}}{\frac{t(k+1)}{\sqrt{k+1}}} &= \frac{\sqrt{k+1}}{\sqrt{k}}\cdot\frac{t(k)}{t(k+1)}\\
&=\left\{
   \begin{array}{ll}
      \frac{\sqrt{k+1}}{\sqrt{k}} >1, & \hbox{if $t(k+1) \leq \frac{\sqrt{k}}{\phi(k)}$;} \\
     \frac{\sqrt{k+1}}{t(k+1)}\cdot \frac{1}{\phi(k)} \geq \frac{\phi(k+1)}{\phi(k)} \geq 1, & \hbox{if $t(k+1) > \frac{\sqrt{k}}{\phi(k)}$,}
   \end{array}
 \right.
\end{align*}
which means that $\frac{t(k)}{\sqrt{k}}$ is decreasing. Using (ii) again, we have $\frac{t(k)}{\sqrt{k}} \to 0$ as $k \to \infty$. For any $k \in \mathbb{N}$, let
\[
n_k:=\min\left\{n \in \mathbb{N}: \phi(n) \geq \frac{k}{t(k)}\right\}.
\]
Then $\{n_k\}$ is increasing. By (ii), for any $k>1$, we deduce that $\phi(n_k) \geq \frac{k}{t(k)} >\frac{\sqrt{k}}{t(k)} \geq \phi(k)$, and so $n_k \geq k$.
Combining this with (i), (ii) and the definition of $n_k$, we conclude that
\begin{equation}\label{nkgeqk}
\sqrt{n_k}\geq \phi(n_k)\cdot t(n_k) \geq \frac{k}{t(k)}\cdot  t(n_k) \geq k,\ \ \forall k>1.
\end{equation}
Hence $n_k \geq k^2$ for all $k>1$, and $n_k/k \to \infty$ as $k \to \infty$. Moreover, we assert that
\begin{equation}\label{sumphi}
\lim_{k \to \infty} \frac{1}{n_k} \sum^k_{j=1}\phi(n_j) =0.
\end{equation}
In fact, since $\phi$ is increasing, we have $\sum^k_{j=1}\phi(n_j) \leq k \phi(n_k) \leq \sqrt{n_k}\cdot\phi(n_k)$ for all $k>1$, and so $n^{-1}_k\sum^k_{j=1}\phi(n_j) \leq \phi(n_k)/\sqrt{n_k}\to 0$ as $k \to \infty$.

For any $k \in \mathbb N$, let $s_k:= \varphi(n_k)-\varphi(n_{k-1})+k$ and $t_k:= s_k/\log(k+1)$. Then $s_k,t_k \to \infty$ as $k \to \infty$, and $\log s_k/\log t_k \to 1$ as $k\to \infty$. It follows from \eqref{nkgeqk} and \eqref{sumphi} that
\begin{equation}\label{uknk}
\alpha:=\lim_{k \to \infty} \frac{1}{n_k} \sum^k_{j=1}\log s_j \leq \lim_{k \to \infty} \frac{1}{n_k} \sum^k_{j=1}\phi(n_j) +\lim_{k \to \infty} \frac{1}{k^2} \sum^k_{j=1}\log j=0.
\end{equation}
Moreover, we claim that $E(\{n_k\},\{s_k\}, \{t_k\})$ is a subset of $S(\varphi)$. In fact, for any $x\in E(\{n_k\},\{s_k\}, \{t_k\})$, there exists $M:=M(x)>0$ such that $1\leq a_j(x)\leq M$ for all $j\neq n_k\,(k\in\mathbb N)$ and $s_k<a_{n_k}(x)\leq s_k+t_k$ for all $k\in \mathbb N$. For any $k \in \mathbb N$, we see that
\begin{equation}\label{Snlowupp1}
\varphi(n_k)<\sum^k_{j=1}a_{n_j}(x)<\varphi(n_k)+ (k+1)^2+ \sum^k_{j=1}\frac{\varphi(n_j)-\varphi(n_{j-1})}{\log(j+1)}.
\end{equation}
For any large $n \in \mathbb N$, there exists a unique $k:=k(n) \in \mathbb{N}$ such that $n_k \leq n <n_{k+1}$. Hence
\begin{equation}\label{Snlowupp2}
\sum^k_{j=1}a_{n_j}(x) <S_n(x) <  nM +\sum^k_{j=1}a_{n_j}(x).
\end{equation}
By the definition of $n_k$, we have
\[
 \frac{k}{t(k)} \leq \phi(n_k) \leq \phi(n) < \frac{k+1}{t(k+1)},
\]
which together with (i) yields that
\[
0 \leq \phi(n) - \phi(n_k) <  \frac{k+1}{t(k+1)} - \frac{k}{t(k)} \leq \frac{1}{t(k)}.
\]
This means that $\phi(n) - \phi(n_k) \to 0$ as $n \to \infty$, that is, $\frac{\varphi(n)}{\varphi(n_k)} \to 1$ as $n \to \infty$.
Combining this with \eqref{nkgeqk}, \eqref{Snlowupp1} and \eqref{Snlowupp2}, we deduce that
\[
 \lim_{n \to \infty} \frac{S_n(x)}{\varphi(n)}=1,
\]
that is, $x\in S(\varphi)$. Therefore, $E(\{n_k\},\{s_k\}, \{t_k\}) \subseteq S(\varphi)$. It follows from \eqref{uknk} and Theorem \ref{Alpha} that
\[
\dim_{\rm H}S(\varphi) \geq \dim_{\rm H}=1.
\]
Hence $\dim_{\rm H}S(\varphi)=1$.
\end{proof}

We are going to prove Theorem \ref{one-halfM}.

\begin{proof}[Proof of Theorem \ref{one-halfM}]

(i) Taking $\varepsilon =1$, by \eqref{ed}, since $\varphi$ is increasing, there exists $\delta_1>0$ such that for all large $k \in \mathbb N$,
\[
\log\varphi((k+1)^2) - \log\varphi(k^2) \geq \log\varphi(k^2+k) - \log\varphi(k^2) \geq \delta_1,
\]
which implies that $\log\varphi(k^2) > 2\delta_1k/3$ for all large $k \in \mathbb N$. Hence there exists $N_1>0$ such that for all $n \geq N^2_1$, $\varphi(n) > e^{c\sqrt{n}}$ with $c:=\delta_1/2$.

For a fixed positive integer $M$, it follows from \eqref{ed} that there exist $\delta_2>0$ and $N_2>2M$ such that for all $n \geq N^2_2$,
\begin{equation}\label{Mdelta2}
\log\varphi\left(n+\frac{\sqrt{n}}{M}\right) - \log\varphi(n) \geq \delta_2.
\end{equation}
For all $k \geq N_2$ and for all $1\leq j \leq M$, since
\[
\lceil k^2 +2(j-1)\frac{k}{M}\rceil \geq k^2 +2(j-1)\frac{k}{M}
\]
and
\[
\lceil k^2 +2(j-1)\frac{k}{M}\rceil + \frac{\sqrt{\lceil k^2 +2(j-1)\frac{k}{M}\rceil}}{M} \leq k^2 +2j\frac{k}{M},
\]
we deduce from \eqref{Mdelta2} and the monotonicity of $\varphi$ that
\begin{equation*}
\log\varphi\left(k^2 +2j\frac{k}{M}\right) - \log\varphi\left(k^2 +2(j-1)\frac{k}{M}\right) \geq \delta_2.
\end{equation*}
Note that $e^s\geq 1+s$ for any $s>0$, so we have
\begin{equation}\label{psid2}
\varphi\left(k^2 +2j\frac{k}{M}\right) \geq (1+\delta_2)\cdot \varphi\left(k^2 +2(j-1)\frac{k}{M}\right).
\end{equation}

We claim that $S(\varphi)$ is a subset of $X(cM/5,0)$. To see this, for any $x\in S(\varphi)$, we derive that $\lim_{n \to \infty} S_n(x)/\varphi(n)=1$, and so there exists $N_3:=N_3(x)>0$ such that for all $n\geq N^2_3$,
\[
\frac{4+3\delta_2}{4(1+\delta_2)}<\frac{S_n(x)}{\varphi(n)}<1+\frac{\delta_2}{2}.
\]
Combining this with \eqref{psid2}, for all $k\geq \max\{N_2,N_3\}$ and for all $1\leq j\leq M$, we see that
\begin{align*}
&S_{\lceil k^2 +2j\frac{k}{M}\rceil}(x)- S_{\lfloor k^2 +2(j-1)\frac{k}{M}\rfloor}(x) \\
&> \frac{4+3\delta_2}{4(1+\delta_2)} \varphi\left(k^2 +2j\frac{k}{M}\right)- \left(1+\frac{\delta_2}{2}\right)\varphi\left(k^2 +2(j-1)\frac{k}{M}\right)\\
&\geq \left(\frac{4+3\delta_2}{4(1+\delta_2)} - \frac{2+\delta_2}{2(1+\delta_2)}\right)\varphi\left(k^2 +2j\frac{k}{M}\right)\\
&\geq \frac{\delta_2}{2(1+\delta_2)} \varphi(k^2).
\end{align*}
Choosing $N_4>0$ such that for all $k \geq N_4$,
\[
\frac{M}{3k}\cdot \frac{\delta_2}{2(1+\delta_2)}\cdot e^{ck} > e^{ck/2}.
\]
Let $N:=\max\{N_1,N_2,N_3,N_4\}$. Then for all $k\geq N$ and for all $1\leq j\leq M$, there exists a positive integer $k_j:=k_j(x) \in (2(j-1)k/M, 2jk/M]$ such that
\[
a_{k^2 +k_j}(x)  \geq \frac{M}{3k}\cdot \frac{\delta_2}{2(1+\delta_2)}\cdot\varphi(k^2) > \frac{M}{3k}\cdot \frac{\delta_2}{2(1+\delta_2)}\cdot e^{ck} > e^{ck/2}.
\]

Fix $m \geq 0$. For all $k \geq\widetilde{N}:=\max\{N, 2m/c\}$, we see that
\[
\prod_{k^2\leq i<(k+1)^2, a_i(x)> e^m} a_i(x) \geq \prod^M_{j=1}a_{k^2 +k_j}(x) >e^{cMk/2}.
\]
Therefore, for sufficiently large $k$, we conclude that
\[
\Pi^{(m)}_{(k+1)^2}(x) \geq \prod^k_{j=1} \prod_{j^2\leq i<(j+1)^2, a_i(x)> e^m} a_i(x) > e^{cM(\widetilde{N}+\cdots+k)/2} > e^{cM(k+1)^2/5}.
\]
That is, $x\in X_m(cM/5,0)$. Hence $x\in X(cM/5,0)$, and so $S(\varphi) \subseteq X(cM/5,0)$.
By Theorem \ref{Xbc}, we obtain $\dim_{\rm H} S(\varphi) \leq \Theta(cM/5,0)$. Since $M>0$ is arbitrary, we get that $\dim_{\rm H} S(\varphi) \leq 1/2$.

(ii) For any large $G>0$, in view of \eqref{limsupinfinity}, we deduce that $\varphi(n) \geq 2ne^{Gn}$ for infinitely many $n\in \mathbb N$.
We claim that $S(\varphi)$ is a subset of $X(0,G)$. In fact, for any $x\in S(\varphi)$, we derive that there exists $N:=N(x)>0$ such that $S_n(x) \geq \varphi(n)/2$ for all $n\geq N$. Hence $S_n(x) \geq ne^{Gn}$ for infinitely many $n\in \mathbb N$. This implies that $a_n(x) \geq e^{Gn}> e^{G(n-1)}$ for infinitely many $n\in \mathbb N$. That is, $x\in X(0, G)$. Therefore, $S(\varphi) \subseteq X(0,G)$. It follows from Theorem \ref{Xbc} that $\dim_{\rm H}S(\varphi) \leq \Theta(0, G)$. Letting $G\to \infty$, we have $\dim_{\rm H} S(\varphi) \leq 1/2$.
\end{proof}

Let us give the proofs of Corollaries \ref{onehalpCor1} and \ref{onehalpCor2}.


\begin{proof}[Proof of Corollary \ref{onehalpCor1}]
Let $\varphi(n) = \exp(c\sqrt{n} +r_1(n))$ with $0<c<\infty$, where $r_1:\mathbb{R}^+\to\mathbb{R}^+$ is increasing and $r_1(n)/\sqrt{n}\to 0$ as $n \to \infty$.

On the one hand, for any $\varepsilon>0$, there exists $N>0$ such that for all $n\geq N$,
\[
\sqrt{n+\varepsilon\sqrt{n}} - \sqrt{n}=\frac{\varepsilon\sqrt{n}}{\sqrt{n+\varepsilon\sqrt{n}}+\sqrt{n}} > \frac{\varepsilon}{3},
\]
and so
\begin{align*}
\log\varphi(n+\varepsilon\sqrt{n}) - \varphi(n)&= c\sqrt{n+\varepsilon\sqrt{n}} +r_1(n+\varepsilon\sqrt{n}) - c\sqrt{n} -r_1(n)\\
&\geq c\sqrt{n+\varepsilon\sqrt{n}} - c\sqrt{n}\\
& > \frac{\varepsilon c}{3}.
\end{align*}
That is, \eqref{ed} holds. By Theorem \ref{one-halfM}, we obtain $\dim_{\rm H}S(\varphi) \leq 1/2$.

On the other hand, let $u_1=v_1:=\varphi(1)$ and $u_n:=\varphi(n)-\varphi(n-1)$ and $v_n:=u_n/n$ for all $n \geq 2$. Then $u_n \to \infty$ and $v_n \to \infty$ as $n\to \infty$. Moreover, $F(\{u_n\}, \{v_n\})$ in \eqref{Euv} is a subset of $S(\varphi)$, and so $\dim_{\rm H}S(\varphi) \geq \dim_{\rm H}F(\{u_n\}, \{v_n\})$. Now it remains to compute the Hausdorff dimension of $F(\{u_n\}, \{v_n\})$. Since $r_1(n)/\sqrt{n}\to 0$ as $n \to \infty$, we have for sufficiently large $n \in \mathbb N$, $r_1(n)< \sqrt{n}$, and so
\[
\frac{e^{c\sqrt{n-1}}}{2\sqrt{n}} < u_n < e^{c\sqrt{n} +r_1(n)} <e^{(c+1)\sqrt{n}},
\]
which implies that
\[
\limsup_{n\to \infty} \frac{\log u_{n+1}}{\log u_1+\log u_2+\cdots+\log u_n} \leq \limsup_{n\to \infty} \frac{(c+1)\sqrt{n+1}}{c(1+\sqrt{2}+\cdots+\sqrt{n-1})}=0.
\]
By Lemma \ref{LRlemma}, we deduce that
\[
\dim_{\rm H}S(\varphi) \geq \dim_{\rm H}F(\{u_n\}, \{v_n\}) = \frac{1}{2+\limsup_{n\to \infty} \frac{\log u_{n+1}}{\sum^n_{k=1}\log u_k}} =\frac{1}{2}.
\]
Therefore, $\dim_{\rm H}S(\varphi) =1/2$.
\end{proof}


\begin{proof}[Proof of Corollary \ref{onehalpCor2}]
Let $\varphi(n)=\exp(c\sqrt{n}+r_2(n))$ with $0<c<\infty$, where $r_2:\mathbb{R}^+ \to \mathbb{R}$ is a function such that $c\sqrt{n}+r_2(n)$ is increasing and satisfies the hypothesis \eqref{maxine}. We remark that the hypothesis \eqref{maxine} implies that $r_2(n)/\sqrt{n} \to 0$ as $n \to \infty$.

On the one hand, for any $\varepsilon>0$, by \eqref{maxine}, we have $|r_2(n+\varepsilon\sqrt{n})-r_2(n)| \to 0$ as $n\to \infty$, and so
\[
\lim_{n\to \infty} \left(\log\varphi(n+\varepsilon\sqrt{n}) -\log\varphi(n)\right) = \frac{\varepsilon c}{2}.
\]
Hence there exists $\delta>0$ such that the inequality \eqref{ed} holds. By Theorem \ref{one-halfM}, we obtain $\dim_{\rm H}S(\varphi) \leq 1/2$.

On the other hand, let $\widetilde{u}_1=\widetilde{v}_1:=\varphi(1)$ and $\widetilde{u}_n:=\varphi(n)-\varphi(n-1)+n$ and $\widetilde{v}_n:=\widetilde{u}_n/\log n$ for all $n \geq 2$. Then $\widetilde{u}_n \to \infty$ and $\widetilde{v}_n \to \infty$ as $n\to \infty$.
Moreover, $F(\{\widetilde{u}_n\}, \{\widetilde{v}_n\})$ in \eqref{Euv} is a subset of $S(\varphi)$, and so $\dim_{\rm H}S(\varphi) \geq \dim_{\rm H}F(\{\widetilde{u}_n\}, \{\widetilde{v}_n\})$. Note that $r_2(n)/\sqrt{n} \to 0$ as $n\to \infty$ and $n\leq\widetilde{u}_n<e^{c\sqrt{n}+r_2(n)}+n$ for all $n\in \mathbb N$, we deduce that
\[
\limsup_{n\to \infty} \frac{\log u_{n+1}}{\log u_1+\log u_2+\cdots+\log u_n}  =0.
\]
Combining this with Lemma \ref{LRlemma}, we conclude that
\[
\dim_{\rm H}S(\varphi) \geq \dim_{\rm H}F(\{\widetilde{u}_n\}, \{\widetilde{v}_n\}) = \frac{1}{2+\limsup_{n\to \infty} \frac{\log u_{n+1}}{\sum^n_{k=1}\log u_k}} =\frac{1}{2}.
\]
Therefore, $\dim_{\rm H}S(\varphi) =1/2$.
\end{proof}

We end this subsection with the proof of Theorem \ref{CM}.

\begin{proof}[Proof of Theorem \ref{CM}]
(i) Let $\varphi(n)=\exp(c\cdot\frac{(\lfloor dn^{1-r}\rfloor)^{r/(1-r)}}{d^{r/(1-r)}})$ with $c,d\in (0,\infty)$ and $r\in [1/2,1)$.
For any $k \in \mathbb N$, let
\[
n_k:= \left\lceil\left(\frac{k}{d}\right)^{1/(1-r)}\right\rceil \ \ \ \text{and} \ \ \ d_k:=\left(\frac{k}{d}\right)^{r/(1-r)}.
\]
If $n_k \leq n<n_{k+1}$ for some $k\in \mathbb N$, then $\lfloor dn^{1-r}\rfloor=k$, and so $\varphi(n)=e^{cd_k}$. For any $k \in \mathbb N$, let
$s_1=t_1:=e^{cd_1}$,
\[
 s_k:=e^{cd_k}- e^{cd_{k-1}}\ \ \ \text{and}\ \ \ t_k:=s_k/k.
\]
Note that $r/(1-r) \geq 1$, we derive that
\[
\alpha:=\lim_{k \to \infty} \frac{1}{n_k} \sum^k_{j=1} \log s_j=cd(1-r)\ \ \ \text{and} \ \ \ \beta:= \lim_{k \to \infty} \frac{\log s_k}{n_k}=0.
\]

For the lower bound of $\dim_{\rm H}S(\varphi)$, we assert that $E(\{n_k\},\{s_k\}, \{t_k\})$ is a subset of $S(\varphi)$. To see this, for any $x \in E(\{n_k\},\{s_k\}, \{t_k\})$, there exists $M:=M(x)>0$ such that $1\leq a_j(x) \leq M$ for all $j\neq n_k\,(k \in \mathbb N)$ and for all $k\in \mathbb N$,
\[
e^{cd_k}-e^{cd_{k-1}} < a_{n_k}(x) \leq \left(1+\frac{1}{k}\right)\left(e^{cd_k} - e^{cd_{k-1}}\right).
\]
For any $n \in \mathbb N$, there exists $k \in \mathbb N$ such that $n_k \leq n <n_{k+1}$, and so $\varphi(n)= e^{c d_k}$. Moreover,
\[
e^{cd_k} <S_n(x)< nM + e^{cd_k} + \sum^k_{j=1} \frac{1}{j}\left(e^{cd_j} - e^{cd_{j-1}}\right).
\]
This implies that $\lim_{n\to \infty}S_n(x)/\varphi(n)=1$, that is, $x\in S(\varphi)$. By Theorem \ref{Alpha}, we get that
\[
\dim_{\rm H}S(\varphi) \geq \dim_{\rm H} E(\{n_k\},\{s_k\}, \{t_k\}) = \eta_d(c),
\]
where $\eta_d(c)$ is the unique solution of $\mathrm{P}(\theta)= cd(1-r)(2\theta -1)$.

For the upper bound we claim that $S(\varphi)$ is a subset of $E_{L}(\{n_k\},\{\widetilde{s}_k\}, \{\widetilde{t}_k\})$ with $\widetilde{s}_k:=(1-2e^{-c/d})e^{cd_k}$ and $\widetilde{t}_k:=(2e^{-c/d}+1)e^{cd_k}$.
In fact, for any $x\in S(\varphi)$, we deduce that $\lim_{n\to \infty}S_n(x)/\varphi(n)=1$, which yields that
\begin{equation}\label{Snk}
\lim_{n \to \infty} \frac{S_{n_{k+1}-1}(x)}{e^{cd_k}}=1 \ \ \ \text{and}\ \ \ \lim_{n \to \infty} \frac{S_{n_{k+1}}(x)}{e^{cd_{k+1}}}=1.
\end{equation}
When $r=1/2$, note that
  \[
  d_{k+1}-d_k = \frac{k+1}{d} - \frac{k}{d} = \frac{1}{d},
  \]
  combining this with \eqref{Snk}, we deduce that
  \[
\lim_{n \to \infty} \frac{a_{n_{k+1}}(x)}{e^{cd_{k+1}}}=1-e^{-c/d};
\]
When $r\in (1/2,1)$, note that
\[
  d_{k+1}-d_k  = \left(\frac{k+1}{d}\right)^{r/(1-r)}- \left(\frac{k}{d}\right)^{r/(1-r)} \geq \left(\frac{1}{d}\right)^{r/(1-r)} \frac{r}{1-r} k^{\frac{2r-1}{1-r}},
  \]
combining this with \eqref{Snk}, we conclude that
\[
\lim_{n \to \infty} \frac{a_{n_{k+1}}(x)}{e^{cd_{k+1}}}=1.
\]
Therefore, for all large $k \in \mathbb N$,
\[
1-\frac{2} {e^{c/d}}- \leq \frac{a_{n_{k+1}}(x)}{e^{cd_{k+1}}} < 2,
\]
that is, $x\in E_{L}(\{n_k\},\{\widetilde{s}_k\}, \{\widetilde{t}_k\})$. By Lemma \ref{Eularge}, we see that
\[
\dim_{\rm H}S(\varphi) \leq \dim_{\rm H} E_{L}(\{n_k\},\{s_k\}, \{t_k\}) \leq\eta_d(c).
\]

(ii) Let $\psi(n)= \exp(c \cdot \exp(\gamma \lfloor \frac{\log n}{\gamma}\rfloor))$ with $c, \gamma\in (0,\infty)$ be fixed. For any $k\in \mathbb N$, let
\[
n_k:= \left\lceil e^{\gamma k}\right\rceil\ \ \ \text{and}\ \ \ b_k:= e^{\gamma k}.
\]
If $n_k \leq n<n_{k+1}$ for some $k\in \mathbb N$, then $\lfloor \frac{\log n}{\gamma}\rfloor=k$, and so $\varphi(n)=e^{cb_k}$. For any $k\in \mathbb N$, let $s_1=t_1:=e^{cb_1}$,
\[
s_k:=e^{cb_k} - e^{cb_{k-1}}\ \ \ \text{and} \ \ \ t_k:=s_k/k.
\]
Then
\[
\alpha:=\lim_{k \to \infty} \frac{1}{n_k} \sum^k_{j=1} \log s_j=\frac{e^\gamma}{e^\gamma-1}\ \ \ \text{and} \ \ \ \beta:= \lim_{k \to \infty} \frac{\log s_k}{n_k}=1.
\]

For the lower bound of $\dim_{\rm H}S(\varphi)$, since $E(\{n_k\},\{s_k\}, \{t_k\})$ with is a subset of $S(\varphi)$, we deduce from Theorem \ref{Alpha} that
\[
\dim_{\rm H}S(\varphi) \geq \dim_{\rm H} E(\{n_k\},\{s_k\}, \{t_k\}) = \xi_\gamma(c),
\]
where $\xi_\gamma(c)$ is the unique solution of
\[
\mathrm{P}(\theta)= c\left(\frac{e^\gamma+1}{e^\gamma-1}\theta- \frac{1}{e^\gamma-1}\right).
\]

For the upper bound, we claim that $S(\varphi)$ is a subset of $E_{L}(\{n_k\},\{\widetilde{s}_k\}, \{\widetilde{t}_k\})$ with $\widetilde{s}_k:=e^{cb_k}/2$ and $\widetilde{t}_k:=e^{cb_k}$. To see this, for any $x\in S(\varphi)$, we have
\begin{equation*}
\lim_{n \to \infty} \frac{S_{n_{k+1}-1}(x)}{e^{cb_k}}=1 \ \ \ \text{and}\ \ \ \lim_{n \to \infty} \frac{S_{n_{k+1}}(x)}{e^{cb_{k+1}}}=1,
\end{equation*}
and so
\[
\lim_{n \to \infty} \frac{a_{n_{k+1}}(x)}{e^{cb_{k+1}}}=1.
\]
This implies that $e^{cb_{k+1}}/2 < a_{n_{k+1}}(x) \leq 3e^{cb_{k+1}}/2$ for all large $k\in \mathbb N$. That is, $x\in E_{L}(\{n_k\},\{\widetilde{s}_k\}, \{\widetilde{t}_k\})$. It follows from Lemma \ref{Eularge} that
\[
\dim_{\rm H}S(\varphi)\leq \dim_{\rm H} E_{L}(\{n_k\},\{s_k\}, \{t_k\}) \leq \xi_\gamma(c).
\]
\end{proof}

\subsection{Limsup and liminf sets of the maximum of coefficients}

Let $\mathcal{N}\subseteq \mathbb N$ be an infinite set. For any $B>1$, let
 \[
M(B,\mathcal{N}):=\left\{x\in \mathbb{I}:M_n(x) \geq B^n\;\text{for infinitely many\;$n \in \mathcal{N}$}\right\}
 \]
and $M(B):=M(B,\mathbb N)$. Then $A(B,\mathcal{N}) \subseteq M(B,\mathcal{N}) \subseteq M(B) =A(B)$. In the light of Theorem \ref{ABhuaN}, we deduce that
\begin{equation}\label{MBhuaN}
\dim_{\rm H}M(B,\mathcal{N})=\theta(\log B).
\end{equation}

\begin{lemma}\label{Mbc}
For any $b,c>1$, let
\[
M(b,c):=\left\{x\in \mathbb{I}: M_n(x) \geq b^{c^n}\ \text{for infinitely many $n \in \mathbb{N}$}\right\}
\]
and
\[
\widehat{M}(b,c):=\left\{x\in \mathbb{I}: M_n(x) \geq b^{c^n}\ \text{for sufficiently large $n \in \mathbb{N}$}\right\}.
\]
Then
\[
\dim_{\rm H} M(b,c) = \dim_{\rm H} \widehat{M}(b,c) =\frac{1}{c+1}.
\]
\end{lemma}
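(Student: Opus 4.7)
The plan is to show that the set equality $M(b,c) = A(b,c)$ holds (the superficially larger limsup set based on maxima collapses to the one based on individual coefficients because $c > 1$), after which both dimensions in the lemma follow immediately from \eqref{LFWLTApplication}. The trivial inclusion $A(b,c) \subseteq M(b,c)$ holds since $M_n(x) \geq a_n(x)$ for every $n$, so everything rests on proving the reverse inclusion $M(b,c) \subseteq A(b,c)$.

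For the reverse inclusion, fix $x \in M(b,c)$ and choose an increasing sequence $n_1 < n_2 < \cdots$ of positive integers with $M_{n_j}(x) \geq b^{c^{n_j}}$. For each $j$, pick an index $k_j \in \{1,\dots,n_j\}$ realising the maximum, namely $a_{k_j}(x) = M_{n_j}(x) \geq b^{c^{n_j}}$. Since $c>1$ and $k_j \leq n_j$, we have $b^{c^{k_j}} \leq b^{c^{n_j}} \leq a_{k_j}(x)$. I next argue that the set of values $\{k_j : j \in \mathbb N\}$ must be unbounded. Indeed, if some integer $k_0$ occurred infinitely often as $k_j$, then $a_{k_0}(x) \geq b^{c^{n_j}} \to \infty$ along that subsequence, which is impossible since $a_{k_0}(x)$ is a finite positive integer. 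Hence one can extract infinitely many distinct indices $k$ with $a_k(x) \geq b^{c^k}$, which is precisely the statement $x \in A(b,c)$.

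Combining the two inclusions yields $M(b,c) = A(b,c)$, so \eqref{LFWLTApplication} immediately gives
\[
\dim_{\rm H} M(b,c) = \dim_{\rm H} A(b,c) = \frac{1}{c+1}.
\]
For the liminf-type set, observe the chain
\[
\widehat{A}(b,c) \subseteq \widehat{M}(b,c) \subseteq M(b,c),
\]
where the first inclusion uses $a_n(x) \leq M_n(x)$ and the second uses the trivial fact that ``for sufficiently large $n$'' implies ``for infinitely many $n$''. Taking Hausdorff dimensions and using the two outer equalities from \eqref{LFWLTApplication} together with the just-established value of $\dim_{\rm H} M(b,c)$, the sandwich forces $\dim_{\rm H}\widehat{M}(b,c) = 1/(c+1)$.

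There is essentially no obstacle: the whole argument is the one-line pigeonhole observation in the second paragraph, together with the sandwich. The only thing to be slightly careful about is pointing out that $c>1$ is essential for the inequality $b^{c^{k_j}} \leq b^{c^{n_j}}$ to hold from $k_j \leq n_j$; without this monotonicity, the set equality $M(b,c)=A(b,c)$ would fail.
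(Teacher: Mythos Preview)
Your proposal is correct and follows essentially the same approach as the paper, which simply records the chain $\widehat{A}(b,c) \subseteq \widehat{M}(b,c) \subseteq M(b,c) = A(b,c)$ and invokes \eqref{LFWLTApplication}. You have additionally supplied the short justification for the equality $M(b,c)=A(b,c)$ that the paper leaves implicit.
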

\begin{proof}
Since $\widehat{A}(b,c) \subseteq \widehat{M}(b,c) \subseteq M(b,c) = A(b,c)$, the proof is a consequence of the results of {\L}uczak \cite{Luc} and Feng et al. \cite{FWLT}.
\end{proof}

By \eqref{MBhuaN} and Lemma \ref{Mbc}, we can prove Theorem \ref{Mpsi} by following the proof of Theorem \ref{HDApsi} step by step.
The details are left to the reader.

Now we are going to compute the Hausdorff dimension of the liminf set $\widehat{M}(\psi)$. For any $C>1$, recall that
\[
\widehat{M}(C):=\left\{x\in \mathbb{I}: M_n(x) \geq C^{n}\ \text{for sufficiently large $n\in \mathbb N$}\right\}
\]
and $\widehat{\theta}(\log C)$ is the unique solution of $\mathrm{P}(\theta)=(\sqrt{\theta}+\sqrt{2\theta-1})^2\log C$.
Before proving Theorem \ref{BhatMN}, we first give a lemma.

\begin{lemma}\label{comnum}
Let $\kappa, \rho \in \mathbb{R}^+$ and $\{x_k\}$ be a sequence of positive real numbers. If
\begin{equation}\label{comnumin}
x_{k} \geq \kappa x_{k-1} +\rho x_{k+1}
\end{equation}
for all large $k$, then $4\kappa\rho \leq 1$.
\end{lemma}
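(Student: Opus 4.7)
The plan is to argue by contradiction, assuming $4\kappa\rho > 1$, and to derive a contradiction from the positivity of the ratios $r_k := x_k/x_{k-1}$. The hypothesis \eqref{comnumin} immediately yields two conclusions: first, since $\rho x_{k+1} > 0$, we have $x_k > \kappa x_{k-1}$, so $r_k > \kappa > 0$ for all large $k$; second, by the arithmetic-geometric mean inequality,
\[
x_k \;\geq\; \kappa x_{k-1} + \rho x_{k+1} \;\geq\; 2\sqrt{\kappa\rho\, x_{k-1} x_{k+1}},
\]
which upon squaring gives $x_k^2 \geq 4\kappa\rho\, x_{k-1} x_{k+1}$, i.e.
\[
r_{k+1} \;=\; \frac{x_{k+1}}{x_k} \;\leq\; \frac{1}{4\kappa\rho}\cdot\frac{x_k}{x_{k-1}} \;=\; \frac{1}{4\kappa\rho}\, r_k.
\]

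Now if $4\kappa\rho > 1$, set $\lambda := 1/(4\kappa\rho) \in (0,1)$. Iterating the inequality $r_{k+1} \leq \lambda r_k$ from some threshold $k_0$ onward yields $r_k \leq \lambda^{k-k_0} r_{k_0}$ for all $k \geq k_0$, so $r_k \to 0$ as $k \to \infty$. This directly contradicts the uniform lower bound $r_k > \kappa > 0$ obtained from the first observation. Hence $4\kappa\rho \leq 1$.

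The argument is essentially one line once AM-GM is applied, so there is no real obstacle; the only thing to be careful about is that the hypothesis is assumed only for all large $k$, not for all $k$, but this suffices because both the lower bound $r_k > \kappa$ and the geometric decay $r_{k+1} \leq \lambda r_k$ hold on the same tail, which is all that is needed to force $r_k \to 0$ while staying above $\kappa$. (An alternative route, should one wish to avoid AM-GM, is to rewrite \eqref{comnumin} as $r_{k+1} \leq f(r_k)$ with $f(r) = \rho^{-1}(1 - \kappa/r)$ and to note that $\max_{r>0}\bigl(f(r)-r\bigr) = (1 - 2\sqrt{\kappa\rho})/\rho$, which is negative precisely when $4\kappa\rho > 1$; then $r_k$ is strictly decreasing and bounded below, and passing to the limit contradicts $f(L)<L$. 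This is essentially the characteristic-equation observation $\rho t^2 - t + \kappa = 0$ having no real root when $4\kappa\rho>1$.)
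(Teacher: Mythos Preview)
Your proof is correct and takes essentially the same approach as the paper: both argue by contradiction, analyze the ratios $x_{k+1}/x_k$, and use AM--GM on the recursion to force these ratios to violate positivity. Your version extracts a geometric decay $r_{k+1}\le (4\kappa\rho)^{-1}r_k$ contradicting the lower bound $r_k>\kappa$, whereas the paper extracts a uniform additive decrease $y_k<y_{k-1}-\varepsilon$ forcing $y_k$ negative; the alternative route you sketch at the end is in fact exactly the paper's argument.
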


\begin{proof}
We prove it by contradiction. Suppose that $4\kappa\rho > 1$. Then $2\sqrt{\kappa/\rho} >1/\rho$. Choose a small $\varepsilon>0$ such that
\[
2\sqrt{\frac{\kappa}{\rho}}> \frac{1}{\rho}+\varepsilon.
\]
Let $y_k:=x_{k+1}/x_k$ for all $k\in \mathbb N$. Then
\[
y_{k-1} + \frac{\kappa}{\rho y_{k-1}} \geq 2\sqrt{\frac{\kappa}{\rho}}> \frac{1}{\rho}+\varepsilon.
\]
Combining this with \eqref{comnumin}, we deduce that for all large $k$,
\[
y_k \leq \frac{1}{\rho} - \frac{\kappa}{\rho y_{k-1}} <y_{k-1} -\varepsilon,
\]
which gives that $y_k$ is eventually negative. This is a contradiction.
\end{proof}

Now we are able to give the proof of Theorem \ref{BhatMN}.

\begin{proof}[Proof of Theorem \ref{BhatMN}]
For any $C>1$, let $\widehat{\theta}:=\widehat{\theta}(\log C)$. For the lower bound of $\dim_{\rm H}\widehat{M}(C)$, let $\gamma\in (0,\infty)$ be defined as
\begin{equation}\label{egammaFc}
e^{\gamma}:=1+\sqrt{\frac{2\widehat{\theta}-1}{\widehat{\theta}}}.
\end{equation}
For any $k \in \mathbb N$, let
\[
n_k:= \left\lceil e^{\gamma k}\right\rceil \ \ \ \text{and}\ \ \ u_k:= e^{\gamma (k+1)}.
\]
Then
\[
\lim_{k \to \infty} \frac{1}{n_k} \sum^k_{j=1} u_j= \frac{e^{2\gamma}}{e^\gamma-1}\ \ \ \text{and} \ \ \ \lim_{k \to \infty} \frac{u_k}{n_k}=e^\gamma.
\]
Since $E(\{n_k\},\{s_k\}, \{t_k\})$ with $s_k=t_k=C^{u_k}$ is a subset of $\widehat{M}(C)$, we deduce from the first statement of Theorem \ref{Alpha} that
\[
\dim_{\rm H} \widehat{F}(c) \geq \dim_{\rm H}  E(\{n_k\},\{s_k\}, \{t_k\}) = \widetilde{\theta},
\]
where $\widetilde{\theta}$ is the unique solution of
\begin{equation}\label{lowerEc}
\mathrm{P}(\theta)= e^\gamma\left(\frac{e^\gamma+1}{e^\gamma-1}\theta- \frac{1}{e^\gamma-1}\right)\log C.
\end{equation}
We assert that $\widehat{\theta}$ is the solution of the equation \eqref{lowerEc}. In fact, we conclude from \eqref{egammaFc} that
\begin{align*}
&e^\gamma\left(\frac{e^\gamma+1}{e^\gamma-1}\widehat{\theta}- \frac{1}{e^\gamma-1}\right)\log C \\
&= \sqrt{\frac{\widehat{\theta}}{2\widehat{\theta}-1}}\left(1+\sqrt{\frac{2\widehat{\theta}-1}{\widehat{\theta}}}\right)\left( \left(2+\sqrt{\frac{2\widehat{\theta}-1}{\widehat{\theta}}}\right)\widehat{\theta}-1\right)\log C\\
&=\left(\sqrt{\widehat{\theta}}+\sqrt{2\widehat{\theta}-1}\right)^2\log C =\mathrm{P}(\widehat{\theta}).
\end{align*}
Then $\widetilde{\theta}= \widehat{\theta}$, and so $\dim_{\rm H} \widehat{M}(C) \geq \widehat{\theta}$.

For the upper bound od $\dim_{\rm H} \widehat{M}(C)$, let $\theta^*:=\dim_{\rm H} \widehat{M}(C)$. Then $\theta^* \geq \widehat{\theta} >1/2$. We will prove that $\theta^* \leq \widehat{\theta}$. For any $x\in \widehat{M}(C)$ and $k\geq 1$, defined $n_1:\equiv1$ and
\[
n_{k+1}(x):= \min\left\{n >n_k(x): a_n(x) > a_{n_k(x)}(x)\right\},
\]
and $s_k(x):= \sum^k_{j=1}n_j(x)$. For typographical convenience, we often write $n_k$ and $s_k$ instead of $n_k(x)$ and $s_k(x)$ if no confusion arises. Then for all $k \in \mathbb N$,
\begin{equation}\label{nkan}
 a_{n_k}(x) = M_{n_{k+1}-1}(x) \geq e^{c(n_{k+1}-1)}.
\end{equation}
Fix a large positive integer $N$. For $\lambda, \mu \in \mathbb{Q}^+$, let
\begin{align*}
Y_{N,\lambda,\mu}(\theta^*,C):= \Big\{x\in \widehat{M}(C):\ &\text{for infinitely many $k$}, \frac{s_k(x)-k}{n_k(x)} >\lambda,\frac{n_{k+1}(x)-1}{n_k(x)} >\mu \\ n_k(x)\mathrm{P}(\theta^*)&\leq \left(1-\frac{1}{N}\right)\big((2\theta^*-1)s_k(x) +\theta^* n_{k+1}(x) \big)\log C\Big\}.
\end{align*}
Then $Y_{N,\lambda,\mu}(\theta^*,C)$ is a subset of $X(\lambda\log C,\mu\log C)$. To see this, for any $x\in Y_{N,\lambda,\mu}(\theta^*,C)$, it follows from \eqref{nkan} that for any large $k$,
\[
a_{n_k}(x) \geq C^{n_{k+1}-1} > C^{\mu n_k} >C^{\mu (n_k-1)}.
\]
Moreover, for any $m \geq 0$ and for any large $k$,
\[
\Pi^{(m)}_{n_k-1}(x) \geq C^{s_k-k} >C^{\lambda n_k}> C^{\lambda (n_k-1)}.
\]
That is, $x\in X(\lambda\log C,\mu\log C)$. So
\[
\dim_{\rm H}Y_{N,\lambda,\mu}(\theta^*,C) \leq \dim_{\rm H}X(\lambda\log C,\mu\log C) = \Theta(\lambda\log C,\mu\log C),
\]
where $\Theta(\lambda\log C,\mu\log C)$ is the unique solution of the Diophantine pressure equation
\[
\mathrm{P}(\theta)=\lambda(2\theta-1)\log C+ \mu\theta\log C.
\]

Let us estimate the Hausdorff dimension of the set
\begin{align*}
Y_{N}(\theta^*,C):= \Big\{x\in \widehat{M}(C):\, \text{for infinitely many $k$,}\\
n_k(x)\mathrm{P}(\theta^*)\leq\left(1-\frac{1}{N}\right)\big((2\theta^*-1)s_k(x)+& \theta^* n_{k+1}(x) \big)c\log C
\Big\}.
\end{align*}
We claim that
\begin{equation}\label{XXYNc}
Y_{N}(\theta^*,C) \subseteq X(0,\infty) \cup X(\infty,0) \cup \left(\bigcup_{(\lambda,\mu) \in \mathcal{Q}} Y_{N,\lambda,\mu}(\theta^*,C)\right),
\end{equation}
where $\mathcal{Q}$ is an at most countable set defined by
\[
\mathcal{Q}:= \left\{(\lambda,\mu) \in \mathbb{Q}^+\times\mathbb{Q}^+: \mathrm{P}(\theta^*)< \left(1-\frac{1}{2N}\right) \big((2\theta^*-1)\lambda+\theta^*\mu\big)\log C\right\}.
\]
In fact, for any $x\in Y_{N}(\theta^*,C)$, we have
\begin{enumerate}
  \item[(i)] if $\{\frac{n_{k+1}}{n_k}\}$ is unbounded, then for any large $G>0$, $n_{k+1}-1 \geq G(n_k-1)$ for infinitely many $k$. By \eqref{nkan}, we see that $a_{n_k}(x)\geq C^{n_{k+1}-1} \geq C^{G(n_{k}-1)}$ for infinitely many $k$. This means that $x\in X(0,G\log C)$ for any large $G>0$. So $x\in X(0,\infty)$;

 \item[(ii)] if $\{\frac{s_{k}}{n_k}\}$ is unbounded, then for any large $H>0$, $s_{k}-k \geq H(n_k-1)$ for infinitely many $k$. For any $m \geq 0$, it follows form \eqref{nkan} that $\Pi^{(m)}_{n_k-1}(x) \geq C^{s_k-k} \geq C^{H (n_k-1)}$ for infinitely many $k$. So $x\in X(\infty,0)$;

 \item[(iii)] if $\{\frac{n_{k+1}}{n_k}\}$ and $\{\frac{s_{k}}{n_k}\}$ are bounded, then there exist $\{k_i\}$ and $\widetilde{\lambda}, \widetilde{\mu} \in [1,\infty)$ such that
  \[
  \lim_{i \to \infty} \frac{s_{k_i}}{n_{k_i}} = \widetilde{\lambda}\ \ \ \text{and}\ \ \ \lim_{i \to \infty} \frac{n_{k_i+1}}{n_{k_i}}=\widetilde{\mu}.
  \]
  Hence there exist $\lambda, \mu \in \mathbb{Q}^+$ such that
  \[
  \lambda < \widetilde{\lambda} < \left(1+\frac{1}{2N}\right)\lambda\ \ \text{and}\ \  \mu < \widetilde{\mu} < \left(1+\frac{1}{2N}\right)\mu,
  \]
  which implies that
\begin{align}\label{thetahatetheta}
\mathrm{P}(\theta^*)&\leq \left(1-\frac{1}{N}\right)\left((2\theta^*-1)\widetilde{\lambda}+\theta^*\widetilde{\mu}\right)\log C \notag\\
&< \left(1-\frac{1}{2N}\right)\big((2\theta^*-1)\lambda+\theta^*\mu\big)\log C.
\end{align}
So $x\in \bigcup_{(\lambda,\mu) \in \mathcal{Q}} Y_{N,\lambda,\mu}(\theta^*,C)$.
\end{enumerate}
Since $\dim_{\rm H}Y_{N,\lambda,\mu}(\theta^*,C)=\Theta(\lambda\log C,\mu\log C) >\frac{1}{2}$ for all $(\lambda,\mu) \in \mathcal{Q}$, $\dim_{\rm H} X(0,\infty)= \dim_{\rm H} X(\infty,0)= \frac{1}{2}$, it follows from \eqref{XXYNc} that
 \[
 \dim_{\rm H} Y_{N}(\theta^*,C) \leq \sup_{(\lambda,\mu) \in \mathcal{Q}}\big\{\Theta(\lambda\log C,\mu\log C)\big\}.
 \]

In the following, we are going to show that $\sup_{(\lambda,\mu) \in \mathcal{Q}}\{\Theta(\lambda\log C,\mu\log C)\} <\theta^*$. For any $(\lambda,\mu) \in \mathcal{Q}$, let $\Theta := \Theta(\lambda\log C,\mu\log C)$. Then
\begin{equation}\label{thetatilde}
\mathrm{P}(\Theta)=\lambda(2\Theta-1)\log C+\mu\Theta\log C.
\end{equation}
Note that
\begin{enumerate}
  \item[(i)] if
  \[
  \theta^* \geq \left(1+\frac{1}{4N}\right) \Theta -\frac{1}{8N},
  \]
  then
  \[
 \Theta\leq \frac{8N\theta^*+1}{8N+2};
  \]

 \item[(ii)] if
  \[
  \theta^* < \left(1+\frac{1}{4N}\right)\Theta-\frac{1}{8N},
  \]
  then
  \[
  2\theta^* -1 < \left(1+\frac{1}{4N}\right) \left(2\Theta-1\right).
  \]
  Since $(\lambda,\mu)$ satisfies the inequality \eqref{thetahatetheta}, we see that
  \begin{align*}
\mathrm{P}(\theta^*)&< \left(1-\frac{1}{2N}\right) \big((2\theta^*-1)\lambda+\theta^*\mu\big)\log C\\
&<\left(1-\frac{1}{2N}\right)\left(1+\frac{1}{4N}\right) \left((2\Theta-1)\lambda+\Theta\mu\right)\log C\\
&< \left(1-\frac{1}{4N}\right)\mathrm{P}(\Theta),
\end{align*}
where the last inequality follows from \eqref{thetatilde}. Since the Diophantine pressure function $\mathrm{P}(\cdot)$ is strictly decreasing and analytic on $(1/2,\infty)$, we have
\[
\Theta < \mathrm{P}^{-1}\left(\frac{4N}{4N-1} \mathrm{P}(\theta^*)\right).
\]
\end{enumerate}
Therefore, we conclude that
\[
\Theta(\lambda\log C,\mu\log C) =:\Theta <\max\left\{\frac{8N\theta^*+1}{8N+2},\ \mathrm{P}^{-1}\left(\frac{4N}{4N-1} \mathrm{P}(\theta^*)\right)\right\}.
\]
Taking the supremum for all $(\lambda,\mu) \in \mathcal{Q}$, we get that
\[
\sup_{(\lambda,\mu) \in \mathcal{Q}}\left\{\Theta(\lambda\log C,\mu\log C)\right\} \leq  \max\left\{\frac{8N\theta^*+1}{8N+2},\ \mathrm{P}^{-1}\left(\frac{4N}{4N-1}\mathrm{P}(\theta^*)\right)\right\}.
\]
Since $\theta^* >1/2$ and the Diophantine pressure function $\mathrm{P}(\cdot)$ is strictly decreasing, the right-hand side of the above inequality is strictly less than $\theta^*$. Therefore,
\[
 \dim_{\rm H} Y_{N}(\theta^*,C) \leq \sup_{(\lambda,\mu) \in \mathcal{Q}} \left\{\Theta(\lambda\log C,\mu\log C)\right\}  <\theta^*:=\dim_{\rm H} \widehat{M}(C).
 \]
This implies that the complement of $Y_{N}(\theta,c)$ has positive Hausdorff dimension, and so there exists $x\in \widehat{M}(C)$ such that for all large $k$,
\begin{equation}\label{compP}
n_k\mathrm{P}(\theta^*)> \left(1-\frac{1}{N}\right)\big((2\theta^*-1)s_k +\theta^* n_{k+1} \big)\log C.
\end{equation}
Since $s_k >n_k$ and $n_{k+1}>n_k$, we have
\begin{equation}\label{compP2}
\mathrm{P}(\theta)> \left(1-\frac{1}{N}\right) \big(3\theta^*-1 \big)\log C.
\end{equation}
Note that $n_k=s_k-s_{k-1}$ and $n_{k+1} =s_{k+1} -s_k$, so the inequality \eqref{compP} becomes
\[
s_k > \kappa s_{k-1} +\rho s_{k+1},
\]
where $\kappa, \rho>0$ are given by
\[
\kappa:=\frac{\mathrm{P}(\theta^*)}{\mathrm{P}(\theta^*)+\left(1-\frac{1}{N}\right)(1-\theta^*)\log C}\  \text{and} \ \rho:=\frac{\left(1-\frac{1}{N}\right)\theta^*\log C}{\mathrm{P}(\theta^*)+\left(1-\frac{1}{N}\right)(1-\theta^*)\log C}.
\]
We remark that the denominators of $\kappa$ and $\rho$ are positive because of the inequality \eqref{compP2}. Applying Lemma \ref{comnum}, we deduce that
\[
4\kappa\rho = \frac{4\left(1-\frac{1}{N}\right)\theta^*\mathrm{P}(\theta^*)\log C}{\left(\mathrm{P}(\theta^*)+\left(1-\frac{1}{N}\right)(1-\theta^*)\log C\right)^2} \leq 1.
\]
Since this inequality holds for all large $N$, letting $N\to \infty$ gives that
\[
\left(\mathrm{P}(\theta^*)+(1-\theta^*)\log C\right)^2 \geq 4\theta^*\mathrm{P}(\theta^*)\log C,
\]
which is equivalent to $\left(\mathrm{P}(\theta^*)-(3\theta^*-1)\log C\right)^2 \geq 4(\log C)^2\theta^*(2\theta^*-1)$, namely
\[
|\mathrm{P}(\theta^*) - (3\theta^*-1)\log C| \geq 2\log C\sqrt{\theta^*(2\theta^*-1)}.
\]
It follows from \eqref{compP2} that $\mathrm{P}(\theta^*) \geq (3\theta^*-1)\log C$. Hence
\[
\mathrm{P}(\theta^*) \geq (3\theta^*-1)\log C+ 2\sqrt{\theta^*(2\theta^*-1)}\log C=(\sqrt{\theta^*} +\sqrt{2\theta^*-1})^2\log C.
\]
Since $\widehat{\theta}$ is the unique solution of $\mathrm{P}(\theta)=(\sqrt{\theta} +\sqrt{2\theta-1})^2\log C$, we have $\theta^* \leq \widehat{\theta}$. Therefore, $\dim_{\rm H} \widehat{M}(C) \leq \widehat{\theta}$ as we want.
\end{proof}

We are able to give the proof of Theorem \ref{AMhat}.

\begin{proof}[Proof of Theorem \ref{AMhat}]
Given a function $\psi:\mathbb{R}^{+}\to\mathbb{R}^{+}$, recall that $C_\psi$ and $c_{\psi}$ are constants defined in \eqref{DefC}.
(i) When $C_\psi=1$, it follows from the definition that for any $\varepsilon>0$, we see that $\psi(n) \leq (1+\varepsilon)^n$ for all large $n \in \mathbb N$. This implies that $\widehat{M}( 1+\varepsilon) \subseteq \widehat{M}(\psi)$, and so $\dim_{\rm H} \widehat{M}(\psi) \geq \widehat{\theta}(\log(1+\varepsilon))$. Letting $\varepsilon \to 0$, we have $\dim_{\rm H} \widehat{M}(\psi) =1$.

(ii) When $1<C_\psi<\infty$, for any $\varepsilon>0$, we know that $\psi(n) \leq (C_\psi+\varepsilon)^n$ for all large $n\in \mathbb N$. Then $\widehat{M}(\psi) \supseteq \widehat{M}(C_\psi+\varepsilon)$, and so
\[
\dim_{\rm H}\widehat{M}(\psi) \geq \widehat{\theta}(\log(C_\psi+\varepsilon)).
\]
Letting $\varepsilon \to 0$ yields that $\dim_{\rm H}\widehat{M}(\psi) \geq \widehat{\theta}(\log C_\psi)$.

Assume that the limsup in the definition of $C_{\psi}$ is a limit. For any $0<\varepsilon<C_\psi-1$, we derive that $\psi(n) \geq (C_\psi-\varepsilon)^n$ for all large $n\in \mathbb N$. Hence $\widehat{M}(\psi) \subseteq \widehat{M}(C_\psi-\varepsilon)$, and so
\[
\dim_{\rm H}\widehat{M}(\psi) \leq  \widehat{\theta}(\log(C_\psi-\varepsilon)).
\]
Letting $\varepsilon \to 0$, we have $\dim_{\rm H}\widehat{M}(\psi) \leq \widehat{\theta}(\log C_\psi)$. Therefore, the equation holds.

(iii) When $C_\psi=\infty$, for any $G>0$, we deduce that $\psi(n) \geq G^n$ for infinitely many $n \in \mathbb N$. Hence $\widehat{M}(\psi) \subseteq M(G)$, and so $\dim_{\rm H}\widehat{M}(\psi) \leq \dim_{\rm H}M(G) = \theta(\log G)$. Letting $G\to \infty$, we have  $\dim_{\rm H}\widehat{M}(\psi) \leq 1/2$.

(iii-1) When $c_\psi =1$, for any $\varepsilon>0$, we know that $\psi(n) \leq e^{(1+\varepsilon)^n}$ for all large $n \in \mathbb N$.
Then $\widehat{M}(e, 1+\varepsilon) \subseteq \widehat{M}(\psi)$, and so $\dim_{\rm H}\widehat{M}(\psi) \geq \dim_{\rm H}\widehat{M}(e, 1+\varepsilon)$. It follows from Lemma \ref{Mbc} that $\dim_{\rm H}\widehat{M}(\psi) \geq 1/(2+\varepsilon)$. Letting $\varepsilon \to 0$, we have $\dim_{\rm H}\widehat{M}(\psi) \geq 1/2$. Therefore, $\dim_{\rm H}\widehat{M}(\psi) = 1/2$.

(iii-2) When $1<c_\psi<\infty$, for any $0<\varepsilon <c_\psi-1$, we conclude that $\psi(n) \leq e^{(c_\psi+\varepsilon)^n}$ for all large $n\in \mathbb N$, and $\psi(n) \geq e^{(c_\psi-\varepsilon)^n}$ for infinitely many $n\in \mathbb N$. Hence
\[
\widehat{M}(e, c_\psi+\varepsilon) \subseteq \widehat{M}(\psi) \subseteq M(e, c_\psi-\varepsilon),
\]
and so
\[
\frac{1}{c_\psi+\varepsilon+1} \leq \dim_{\rm H}\widehat{M}(\psi) \leq \frac{1}{c_\psi-\varepsilon+1}.
\]
Letting $\varepsilon \to 0$, we obtain $\dim_{\rm H} \widehat{M}(\psi) = 1/(c_\psi+1)$.

(iii-3) When $c_\psi=\infty$, for any $G>1$, $\psi(n) \geq e^{G^n}$ for infinitely many $n \in \mathbb N$. Then $\widehat{M}(\psi) \subseteq M(e, G)$, and so $\dim_{\rm H} \widehat{M}(\psi) \leq 1/(G+1)$. Letting $G\to \infty$, we derive that $\dim_{\rm H} \widehat{M}(\psi) =0$.
\end{proof}

\end{document}